\newtheorem{theorem}{Theorem}[section]
\newtheorem{fact}[theorem]{Fact}
\newtheorem{lemma}[theorem]{Lemma}
\newtheorem{claim}[theorem]{Claim}
\newtheorem{definition}[theorem]{Definition}
\newtheorem{prop}[theorem]{Proposition}
\numberwithin{equation}{section}
\def\eps{\varepsilon}
\newcommand{\ep}{\varepsilon}
\newcommand{\bi}{\leftrightarrow}
\newcommand\blfootnote[1]{%
  \begingroup
  \renewcommand\thefootnote{}\footnote{#1}%
  \addtocounter{footnote}{-1}%
  \endgroup
}
\def\N{\mathbb{N}}
\def\Prob{\mathbb{P}}
\def\cH{\mathcal{H}}
\newcommand{\forw}[1]{\overrightarrow{#1}}
\newcommand{\back}[1]{\overleftarrow{#1}}
\newcommand*{\rom}[1]{\expandafter\@slowromancap\romannumeral #1@}
\DeclarePairedDelimiter\floor{\lfloor}{\rfloor} 
\DeclarePairedDelimiter\ceil{\lceil}{\rceil} 
\def\COMMENT#1{}
\let\COMMENT=\footnote
\tikzstyle{every node}=[circle, draw, fill=black!50, inner sep=0pt, minimum width=2pt]
\tikzset{middlearrow/.style={
  decoration={markings,
   mark= at position 0.5 with {\arrow[very thick]{#1}} ,
  },
  postaction={decorate}
 }
}
\title{On oriented  cycles in randomly perturbed digraphs}
\author{Igor Araujo \and J\'ozsef Balogh \and Robert A. Krueger \and Sim\'on Piga \and Andrew Treglown}
\thanks{IA: Department of Mathematics, University of Illinois at Urbana-Champaign, Urbana, Illinois 61801, USA. Email: \texttt{igoraa2@illinois.edu}. Research partially supported by UIUC Campus Research Board RB 22000.
\\ \indent JB: Department of Mathematics, University of Illinois at Urbana-Champaign, Urbana, Illinois 61801, USA. Email: \texttt{jobal@illinois.edu}. Research partially supported by NSF Grant DMS-1764123, Arnold O. Beckman Research Award (UIUC Campus Research Board RB 22000), the Langan Scholar Fund (UIUC), and NSF RTG Grant DMS-1937241.
\\ \indent RK: Department of Mathematics, University of Illinois at Urbana-Champaign, Urbana, Illinois 61801, USA. Email: \texttt{rak5@illinois.edu}. Research supported  the NSF Graduate Research Fellowship Program  Grant No.\ DGE 21-4675.
\\ \indent SP: University of Birmingham, United Kingdom. Email: \texttt{s.piga@bham.ac.uk}. Research supported by EPSRC grant EP/V002279/1.
\\ \indent AT: University of Birmingham, United Kingdom. Email: \texttt{a.c.treglown@bham.ac.uk}. Research supported by EPSRC grant EP/V002279/1.}
\begin{document}

\maketitle

\begin{abstract}
In 2003, Bohman, Frieze, and Martin  initiated the study of randomly perturbed graphs and digraphs. For digraphs, they showed that for every $\alpha>0$, there exists a constant $C$ such that for every   $n$-vertex digraph of minimum semi-degree at least $\alpha n$, if one adds $Cn$ random edges then asymptotically almost surely the resulting digraph contains a consistently oriented Hamilton cycle.
We generalize their result, showing that the hypothesis of this theorem actually asymptotically almost surely ensures the existence of every orientation of a cycle of every possible length, simultaneously. 
Moreover, we prove that we can relax the minimum semi-degree condition to a minimum total degree condition when
considering orientations of a cycle that do not contain a large number of vertices of indegree $1$.
Our proofs make use of a variant of an absorbing method of Montgomery. 
\end{abstract}

\blfootnote{The main results of this paper were first announced in the conference abstract~\cite{conf}.}

\section{Introduction}
Hamilton cycles are one of the most studied objects in graph theory, and several classical results measure how `dense’ a graph needs to be to force a Hamilton cycle. In particular, in 1952 Dirac~\cite{D} proved that every $n$-vertex graph with minimum degree $\delta(G) \geq n/2$ contains a Hamilton cycle; the minimum degree condition here is best possible.

The Hamiltonicity of directed graphs has also been extensively investigated since the 1960s. A \emph{directed graph}, or \emph{digraph}, is a set of vertices together with a set of ordered pairs of distinct vertices. We think of a digraph as a loop-free multigraph, where every edge is given an orientation from one endpoint to another, and there is at most one edge oriented in each of the two directions between a pair of vertices. An \emph{oriented graph} is a digraph with at most one directed edge between every pair of vertices. An edge from vertex $u$ to vertex $v$ is represented as $\forw{uv}$ or $\back{vu}$.
In the digraph setting, there is more than one natural analog of the minimum degree of a graph.
The \emph{minimum semi-degree $\delta^0(D)$} of a digraph $D$ is the minimum of all the in- and outdegrees of the vertices in $D$; the \emph{minimum total degree $\delta(D)$} is the minimum number of edges incident to a vertex in $D$.
Ghouila-Houri~\cite{GhouilaHouri} proved that every strongly connected $n$-vertex digraph $D$ with minimum total degree $\delta(D)\geq n$ contains a \emph{consistently oriented} Hamilton cycle, that is, a cycle $(v_1, v_2, \dots, v_n, v_{n+1}=v_1)$ with edges $\forw{v_i v_{i+1}}$ for all $i \in [n]$. Note that there are $n$-vertex digraphs $D$ with $\delta(D) = 3n/2 - 2$ that do not contain a consistently oriented Hamilton cycle, so the strongly connected condition in Ghouila-Houri's theorem is necessary. 

An immediate consequence of Ghouila-Houri's theorem is that having minimum semi-degree $\delta^0(D) \geq n/2$ forces a consistently oriented Hamilton cycle, and this is best possible. After earlier partial results~\cite{Grant, Hagg}, DeBiasio, K\"uhn, Molla, Osthus, and Taylor~\cite{dkmot} proved that this minimum semi-degree condition in fact forces all possible orientations of a Hamilton cycle, except for the \emph{anti-directed} Hamilton cycle, that is, a cycle $(v_1, v_2, \dots, v_n, v_{n+1} = v_1)$ with edges $\forw{v_i v_{i+1}}$ for all odd $i \in [n]$ and $\back{v_i v_{i+1}}$ for all even $i \in [n]$, where $n$ is even. Earlier, DeBiasio and Molla~\cite{deb} showed that the minimum semi-degree threshold for forcing the anti-directed Hamilton cycle is in fact $\delta^0(D) \geq n/2+1$. 

There has also been interest in Hamilton cycles in random digraphs: the \emph{binomial random digraph $D(n,p)$} is the digraph with vertex set $[n]$, where each of the $n(n-1)$ possible directed edges is present with probability $p$, independently of all other edges. Recently, Montgomery~\cite{mont} determined the sharp threshold for the appearance of any fixed orientation of a Hamilton cycle $H$ in $D(n,p)$, thereby answering a conjecture of Ferber and Long~\cite{fl} in a strong form. Depending on the orientation of $H$, the threshold here can vary from $p=\log n/2n$ to $p=\log n/n$. 

In this paper, we find arbitrary orientations of Hamilton cycles in the randomly perturbed digraph model.
Introduced in both the undirected and directed setting by Bohman, Frieze, and Martin~\cite{bfm1}, this model starts with a dense (di)graph and then adds $m$ random edges to it. The overarching question now is how many random edges are required to ensure that the resulting (di)graph \emph{asymptotically almost surely (a.a.s.)} satisfies a given property, that is, with probability tending to $1$ as the number of vertices $n$ tends to infinity.
For example, Bohman, Frieze, and Martin~\cite{bfm1} proved that for every $\alpha>0$, there is a $C=C(\alpha)$ such that if we start with an arbitrary $n$-vertex graph $G$ of minimum degree $\delta(G)\geq \alpha n$ and add $Cn$ random edges to it, then a.a.s.\ the resulting graph is Hamiltonian. Furthermore, given a constant $0<\alpha <1/2$, in a complete bipartite graph with part sizes $\alpha n$ and $(1-\alpha)n$, a linear number of random edges are needed to ensure Hamiltonicity. Thus their result is best possible up to the dependence of $C$ on $\alpha$.  Subsequently, there has been a significant effort to improve our understanding of randomly perturbed graphs. 
See, e.g.,~\cite[Section 1.3]{hmt} and the references within for a snapshot of some of the results in the area.

Bohman, Frieze, and Martin~\cite{bfm1} also proved the analogous result for consistently oriented Hamilton cycles in the randomly perturbed digraph model. Their result is also best possible up to the dependence of $C$ on~$\alpha$, for similar reasons as the undirected setting.
\begin{theorem}[Bohman, Frieze, and Martin~\cite{bfm1}]\label{bfmthm}
 For every $\alpha > 0$, there is a $C=C(\alpha)$ such that if $D_0$ is an $n$-vertex digraph of minimum semi-degree $\delta^0(D_0)\geq \alpha n$, then $D_0 \cup D(n,C/n)$ a.a.s.~contains a consistently oriented Hamilton cycle. 
\end{theorem}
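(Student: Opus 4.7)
The plan is to follow an absorption-plus-almost-cover template, revealing the random edges in two independent rounds: write $D(n,C/n) = R_1 \cup R_2$ where $R_i \sim D(n,C_i/n)$ with $C_1 + C_2 \le C$, and think of $R_1$ as a sparse template supplementing $D_0$, and of $R_2$ as a source of a few random connectors to be used at the very end.

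First, I would use $D_0$ together with $R_1$ to build a short consistently oriented directed \emph{absorbing path} $A$ of length $o(n)$ with the following property: for every sufficiently small $S \subseteq V(D_0) \setminus V(A)$ there is a consistently oriented directed path on $V(A) \cup S$ with the same two endpoints as $A$. The elementary absorbing gadget for a target vertex $v$ is a pair $u,w$ together with edges $\forw{uv}, \forw{vw}, \forw{uw}$: replacing $\forw{uw}$ by the length-two detour $\forw{uv}\forw{vw}$ inserts $v$ while preserving consistent orientation. The minimum semi-degree hypothesis supplies $\Omega(n^2)$ such gadgets per vertex $v$ inside $D_0$, and $R_1$ will be used, along the lines of the Montgomery-style construction mentioned in the abstract, to thread many disjoint gadgets per vertex onto a single short consistently oriented directed path.

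Second, on the remainder $V' := V(D_0)\setminus V(A)$, the subdigraph $D_0[V']$ still satisfies $\delta^0 \ge (\alpha-o(1))n$. Using the fact that any maximal consistently oriented directed path in a digraph of minimum semi-degree $\alpha' n$ has at least $\alpha' n$ vertices (its final vertex has all $\alpha' n$ out-neighbours on the path), I would greedily extract such paths; after $O(1/\alpha)$ rounds of extraction followed by covering any $\eps n$ stubborn vertices with trivial length-$0$ paths, we obtain a family $\mathcal{P} = \{P_1,\dots,P_k\}$ of at most $k=k(\alpha,\eps)$ vertex-disjoint consistently oriented directed paths that partitions $V'$, with all but at most $\eps n$ vertices lying on paths of linear length.

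Third, I would use $R_2$ to link $A, P_1, \dots, P_k$ cyclically into a consistently oriented near-Hamilton cycle, and then invoke the absorbing property of $A$ to incorporate the handful of vertices missed in the connection stage. For each transition between two prescribed endpoints there will be $\Omega(n)$ admissible internal connectors—either a single edge of $R_2$, or a length-two path through an unused vertex with one edge of $R_2$ and one edge of $D_0$—and with $C_2$ a sufficiently large constant a straightforward greedy matching argument finds disjoint connectors for all $k+1$ transitions simultaneously a.a.s. The leftover vertices are then swallowed by $A$, yielding a consistently oriented Hamilton cycle of $D_0\cup D(n,C/n)$.

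The main obstacle is the first step: because $R_1$ contributes only $\Theta(C)$ edges per vertex on average, it is far too sparse to host the absorber on its own, while $D_0$ alone is not ``spread out'' enough to guarantee a single short path containing many disjoint absorbing options per vertex. Designing the sparse random template so that it both threads gadgets of $D_0$ into one consistently oriented directed path and leaves every $v \in V(D_0)\setminus V(A)$ with linearly many disjoint insertion points—while respecting the strict orientation constraints of a consistent cycle—is where the Montgomery-style method must be adapted most carefully to the directed setting.
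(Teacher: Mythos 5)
First, note that the paper does not prove this statement at all: Theorem~\ref{bfmthm} is quoted from~\cite{bfm1} (whose proof is rotation--extension, not absorption), and the closest in-paper argument is the absorption proof of the stronger Theorem~\ref{thm::semideg_simple}. Your architecture (gadget-based absorbing path, almost-cover, random connectors) matches the strategy the paper sketches in Section~\ref{subsec::ideas} for the consistently oriented case, and your insertion gadget --- replacing an edge $\forw{uw}$ of the absorbing path by the detour $\forw{uv}\,\forw{vw}$ with both detour edges in $D_0$ --- is exactly the paper's. However, you defer the construction of the absorbing path itself (which you rightly call the main obstacle, and which is the actual content of Sections~\ref{subsec::ideas} and~\ref{sec::absorber}), and two of the steps you do spell out would fail as written.

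The covering step is the serious gap. Extracting maximal consistently oriented paths from $D_0[V']$ alone cannot yield a bounded number of paths covering all but $\eps n$ vertices: after the first extraction removes at least $\alpha n$ vertices, the minimum semi-degree of the induced subdigraph on what remains is only guaranteed to be $\alpha n-(n-|U|)$, which is $0$ once $\alpha n$ vertices have been deleted, so maximality gives nothing further. Concretely, let $A_1$ have $2\alpha n$ vertices joined bidirectionally to all of $V$, with $V\setminus A_1$ independent; then $\delta^0(D_0)\geq \alpha n$, yet no two consecutive vertices of a directed path lie in $V\setminus A_1$, so one maximal path can exhaust $A_1$ and leave $(1-4\alpha)n$ isolated vertices, forcing $\Theta(n)$ paths. (This also explains the internal inconsistency of claiming both $k=k(\alpha,\eps)$ and ``$\eps n$ trivial length-$0$ paths''.) The randomness must be used in the covering step: the paper covers $V'$ up to $2\eps n$ vertices with a \emph{single} long bidirected path supplied by the $\eps n$-bipseudorandomness of $D^*(n,C/n)$ (Lemma~\ref{lem::longpath}), with no help from $D_0$. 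Separately, your connectors are too short: a length-two connector $u\to x\to v$ with $\forw{ux}$ random and $\forw{xv}\in E(D_0)$ exists only with probability $1-(1-C_2/n)^{\alpha n}\to 1-e^{-C_2\alpha}$, a constant bounded away from $1$, and the expected number of realized such connectors is $O(C_2\alpha)$, not $\Omega(n)$; so the joining step does not succeed a.a.s. The standard repair, used in the paper's Proposition~\ref{prop::connecting}, is to place the random edge \emph{between} two linear-sized $D_0$-neighborhoods (a length-three connector), where the failure probability is $e^{-\Theta(C\alpha^2 n)}$.
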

A notion closely related to Hamiltonicity is \emph{pancyclicity}, which is when a (di)graph contains cycles of every possible length. Bondy~\cite{bondy2} generalized Dirac's theorem, showing that if $\delta(G) \geq n/2$ then $G$ is pancyclic or $K_{n/2,n/2}$. Shortly after, Bondy~\cite{bondy1} proposed his famous meta-conjecture that any `non-trivial' sufficient condition for Hamiltonicity should be a sufficient condition for pancyclicity, up to a small number of exceptional graphs.
Krivelevich, Kwan, and Sudakov~\cite{kks2} generalized Theorem~\ref{bfmthm} in this way, showing that the same conditions as in Theorem~\ref{bfmthm} imply that the randomly perturbed digraph contains consistently oriented cycles of every length.
\begin{theorem}[Krivelevich, Kwan, and Sudakov~\cite{kks2}]\label{kks2}
 For every $\alpha > 0$,  there is a $C=C(\alpha)$ such that if $D_0$ is an $n$-vertex digraph of minimum semi-degree $\delta^0(D_0)\geq \alpha n$, then $D_0 \cup D(n,C/n)$ a.a.s.~contains a consistently oriented cycle of every length between $2$ and $n$.
\end{theorem}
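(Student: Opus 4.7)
I would split the random edges as $D(n, C/n) = D(n, p_1) \cup D(n, p_2)$ with $p_1 = p_2 = C/(3n)$, and handle different ranges of the target length $\ell$ by different arguments. The Hamilton case $\ell = n$ follows immediately from Theorem~\ref{bfmthm} applied to $D_0 \cup D(n, p_1)$ with $C$ chosen large in terms of $\alpha$, so I concentrate on $\ell \in [2, n-1]$.

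For short $\ell$, say $2 \leq \ell \leq (1-\varepsilon)\alpha n$, I would exploit the abundance of consistently oriented paths inside $D_0$. Because $\delta^0(D_0) \geq \alpha n$, a greedy procedure produces at least $n\cdot((1-\varepsilon)\alpha n)^{\ell-1}/\ell$ consistently oriented paths of length $\ell-1$ in $D_0$ alone; each such path becomes a consistently oriented $\ell$-cycle whenever its missing last edge lies in $D(n, p_2)$, an event of probability $p_2$ independent of the path's construction. The expected number of consistently oriented $\ell$-cycles produced this way is therefore of order $(\alpha n)^{\ell-1}$, which is enormous, and an application of Janson's inequality (applied to the events ``cycle on a specific cyclic $\ell$-tuple is present'') yields existence with failure probability $o(1/n)$ for each such $\ell$.

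For long $\ell$, I would instead use a random-subset reduction. Pick $S \subset V(D_0)$ uniformly at random with $|S| = \ell$. A Chernoff bound shows that $\delta^0(D_0[S]) \geq \alpha\ell/2$ with failure probability $o(1/n)$, and the edges of $D(n, p_2)$ lying inside $S$ form a binomial random digraph on $\ell$ vertices with edge probability $p_2 = C/(3n)$. If $C$ is chosen large enough that $p_2 \geq C'/\ell$ for all $\ell \geq \alpha n/2$, where $C' = C'(\alpha/2)$ is the constant supplied by Theorem~\ref{bfmthm}, then Theorem~\ref{bfmthm} applied inside $S$ delivers a consistently oriented Hamilton cycle of $D[S]$, equivalently a consistently oriented $\ell$-cycle of $D_0 \cup D(n, C/n)$. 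The overlap with the short-cycle range at $\ell \approx \alpha n/2$ means every $\ell \in [2, n-1]$ is covered.

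The main obstacle is reducing the failure probabilities enough to survive a union bound over the $n-1$ values of $\ell$. For the long-cycle argument in particular one needs a strengthening of Theorem~\ref{bfmthm} whose failure probability is polynomially small rather than merely $o(1)$; this typically requires reproving it with tighter concentration or using a more resilient absorbing/reservoir structure. For the short-cycle argument, the dependencies between overlapping consistently oriented paths of length $\ell-1$ must be handled with care as $\ell$ grows to a linear fraction of $n$, so that Janson's inequality continues to give a summable failure probability.
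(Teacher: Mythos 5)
First, note that this paper does not prove Theorem~\ref{kks2} directly: it quotes it from~\cite{kks2} (whose proof is of rotation--extension type) and instead proves the stronger Theorem~\ref{thm::semideg_simple} by coupling $D(n,C/n)$ with the bidirected model $D^*(n,C/n)$ (Lemma~\ref{lem::coupling}), extracting an $\eps n$-bipseudorandom property that holds with probability $1-e^{-n}$ (Lemma~\ref{lem::pseudo}), and then running an absorption argument. The crucial design feature there is that \emph{each individual cycle} is found with failure probability $e^{-n}$, which is what makes the union bound over all $\Theta(n)$ lengths (indeed, over all $n2^n$ orientations) work. Your proposal is a genuinely different, more elementary route, and its short-cycle branch is essentially sound, but its long-cycle branch has a real gap precisely at this union-bound step.

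Concretely: for $\ell\ge \alpha n/2$ you invoke Theorem~\ref{bfmthm} as a black box inside a subset $S$ of size $\ell$. As stated, Theorem~\ref{bfmthm} only guarantees success with probability $1-o(1)$, and summing $o(1)$ over the $\Theta(n)$ admissible lengths gives nothing. You acknowledge this yourself, but flagging that one "needs a strengthening with polynomially small failure probability" is not the same as supplying one --- and in fact polynomially small would still not obviously suffice without care about the constant in the exponent, whereas what is really needed (and what the paper's pseudorandomness approach delivers) is failure probability $o(1/n)$ per length, e.g.\ $e^{-\Omega(n)}$. Until that quantitative version of Theorem~\ref{bfmthm} is proved, the long-cycle case is incomplete. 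Two smaller remarks. (i) The reduction itself is otherwise fine: a good set $S$ with $\delta^0(D_0[S])\ge \alpha\ell/2$ exists deterministically (a uniformly random $S$ works with positive probability by Chernoff for the hypergeometric distribution), and $D(n,p_2)$ restricted to $S$ is $D(\ell,p_2)$ with $p_2\ge C(\alpha/2)/\ell$ for suitable $C$. (ii) In the short-cycle branch, Janson's inequality is both unnecessary and awkwardly set up, since all but one edge of each candidate cycle is deterministic. The clean argument is: for each vertex $u$ there are at least $\alpha n-\ell\ge \eps\alpha n$ endpoints $v$ of consistently oriented $\ell$-vertex paths from $u$ in $D_0$, giving $\Omega(\alpha n^2)$ distinct ordered pairs $(u,v)$; the corresponding closing edges $\forw{vu}$ are distinct and appear independently in $D(n,p_2)$, so the failure probability is $(1-p_2)^{\Omega(\alpha n^2)}=e^{-\Omega(C\alpha n)}$, which is more than enough for the union bound. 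With that repair the short range is correct; the theorem as a whole still hinges on the missing quantitative form of Theorem~\ref{bfmthm}.
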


The original rotation-extension-type proofs of Theorems~\ref{bfmthm} and~\ref{kks2} only guarantee consistently oriented cycles. Our main result is a generalization of Theorem~\ref{kks2} to allow for all orientations of a cycle of every possible length.
Moreover, we find all these cycles simultaneously, i.e., $D_0 \cup D(n,C/n)$ a.a.s.~ contains all of them. This last property is an example of \emph{universality}, a notion both well-studied in the random graph (e.g.,~\cite{univ, mont})
and randomly perturbed (e.g.,~\cite{bhkmpp, par}) settings.

\begin{theorem} \label{thm::semideg_simple}
 For every $\alpha > 0$, there is a $C=C(\alpha)$ such that if $D_0$ is an $n$-vertex digraph of minimum semi-degree $\delta^0(D_0) \geq \alpha n$, then $D_0 \cup D(n,C/n)$ a.a.s.~contains every orientation of a cycle of every length between $2$ and $n$.
\end{theorem}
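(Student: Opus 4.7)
The plan is to adapt Montgomery's absorbing method to the directed, arbitrarily oriented setting, together with a case split on the length $\ell$ of the target cycle $H$. First, I would apply the standard coupling trick to split $D(n,C/n) = R_1 \cup R_2 \cup R_3$ into three independent copies of $D(n,C'/n)$ with $C' = C'(\alpha)$, and then for each target orientation $H$ argue deterministically once the random edges are revealed. For $\ell$ bounded by a constant depending on $\alpha$, any orientation of a cycle of length $\ell$ can be embedded greedily inside $D_0$ alone, since $\delta^0(D_0) \geq \alpha n$ leaves at least $\alpha n - O(\ell) > 0$ valid choices for an in- or out-neighbour at each step. The substantive work is for $\ell$ at least a small power of $n$, where I would use the absorption strategy described below.

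Given a target orientation $H$ of length $\ell$ that is a substantial fraction of $n$, I would construct the embedding in three phases. In \textbf{Phase 1 (absorbers)}, using $R_1$ together with $D_0$'s density, I would build an absorbing structure $A$: a set of $o(n)$ vertices containing two designated endpoints and a reservoir $Z \subseteq A$, such that for any subset $Z' \subseteq Z$ there is an oriented path in $D_0 \cup R_1$ from one endpoint of $A$ to the other whose internal vertex set is $A \setminus (Z \setminus Z')$, and which follows the appropriate segment of $H$. Unlike the undirected setting, the gadgets must come in several flavours classified by the local orientation pattern of $H$ at an absorption site (sink, source, forward-transit, backward-transit), and a matching argument will assign each reservoir vertex to a compatible gadget. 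In \textbf{Phase 2 (almost-spanning path)}, using $D_0$ and $R_2$, I would build an oriented path $P$ on $V \setminus Z$ (or, for $\ell < n$, on a suitably sized subset) following the remainder of $H$; this calls for a connecting lemma showing that most pairs of vertices can be joined by a short oriented path of any prescribed orientation using only constantly many random edges from $R_2$, which is feasible because $\delta^0(D_0) \geq \alpha n$ gives linearly many candidate in- and out-neighbours at both endpoints. In \textbf{Phase 3 (closing)}, I would use $R_3$ to splice the endpoints of $P$ onto the endpoints of $A$, and then invoke the absorbing property of $A$ to swallow exactly the correct subset of $Z$, producing the desired $H$.

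To obtain universality across all orientations and lengths simultaneously, I would combine a union bound over the $O(n \cdot 2^n)$ oriented cycles with concentration estimates showing that each individual embedding succeeds with probability $1 - e^{-\Omega(n)}$ over the random edges, which is in turn driven by standard concentration for subgraph counts and matching counts in $D(n, C'/n)$. The main obstacle is Phase 1: in Montgomery's original setup a single gadget type can absorb any vertex, whereas here each absorption site demands boundary orientations matching $H$ locally, so absorbers must be built in several types and type-matched to reservoir vertices. Ensuring that only $O(n)$ random edges yield a surplus of absorbers of every type, and that the type-matching can be realised robustly enough to be re-used across all target orientations at once, is the technical heart of the argument; the extra slack built into this step is also what lets the same structure simultaneously realise cycles of every length between $2$ and $n$, by stopping the path in Phase 2 short and absorbing fewer reservoir vertices.
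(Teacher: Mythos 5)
Your high-level architecture (absorber, almost-spanning path, closing, union bound over the $O(n2^n)$ orientations with failure probability $e^{-\Omega(n)}$ each) matches the paper's, but two steps contain genuine gaps. First, the claim that constant-length cycles ``can be embedded greedily inside $D_0$ alone'' is false: greedy extension builds oriented \emph{paths}, but closing the cycle requires an edge of a prescribed orientation between two already-fixed vertices, which $D_0$ need not provide. Concretely, the complete blow-up of a consistently oriented triangle has $\delta^0=n/3$ but contains no bidirected edge (the cycle of length $2$) and no transitive triangle, so for $\alpha\le 1/3$ the random edges are genuinely needed even at length $2$ and $3$; the paper handles these cases with separate probabilistic arguments.

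Second, and more centrally, your Phase 1 does not resolve the difficulty that is the heart of the paper. As written, your absorber can only \emph{omit} subsets of its internal reservoir $Z$, so it cannot incorporate the small but arbitrary set of leftover vertices that Phase 2 inevitably fails to cover (and if Phase 2 were required to cover $V\setminus Z$ \emph{exactly} with a path of prescribed arbitrary orientation, that is essentially the original problem). If instead your gadgets are meant to absorb external vertices and be ``type-matched'' by the local orientation of $H$ (sink, source, transit), this breaks down because of a cascading effect: the position of $H$ that an absorbed vertex must occupy depends on how many vertices are absorbed before it along the absorbing path, which depends on the leftover set itself, so no fixed flavour can be pre-assigned to a site. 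The paper's resolution has two ingredients you are missing: (i) a coupling to the model $D^*(n,p)$ of bidirected random edges, so that the spines of the gadgets and the almost-spanning path are \emph{bidirected} (a bidirected path contains every orientation, which also rescues your Phase 2 --- building an almost-spanning path of a prescribed arbitrary orientation from sparse one-way random edges would otherwise require Montgomery's theorem); and (ii) a two-stage absorption in which the arbitrary leftover set is first woven through a pre-reserved set $X$ into which every vertex of $D_0$ has many in- and out-neighbours, and only the \emph{unused part of $X$} is then absorbed via Montgomery's template graph $H_m$, with one fixed gadget per template vertex $z$ serving all $40$ of its potential partners, so each absorbed vertex plays one of only a constant number of pre-determined roles in $H$. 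Without something equivalent to (i) and (ii), the type-matching you describe cannot be ``realised robustly'' in the way your sketch requires.
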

Theorem~\ref{thm::semideg_simple} is best possible in the sense that one really needs to add a linear number
of random edges to $D_0$. Indeed, similarly as before, let $D$ be  the complete bipartite digraph  with part sizes $\alpha n$ and $(1-\alpha)n$ (where $0<\alpha <1/2$). Then one needs to add a linear number of edges to $D$ to ensure a Hamilton cycle of \emph{any} orientation.

It is also natural to try and generalize Theorem~\ref{bfmthm} in another direction, by relaxing the minimum semi-degree condition to a total degree. Unfortunately, this cannot be true for a Hamilton cycle $H$ in which all but $o(n)$ vertices have in- and outdegree $1$. Indeed, given $0< \alpha <1/2$, let $D$ be the $n$-vertex digraph
which consists of vertex classes $S$ and $T$ of sizes $\alpha n$ and $(1-\alpha)n$ respectively, and whose edge set consists of all possible edges with their startpoint in $S$ and their endpoint in $T$. Then whilst $\delta(D)= \alpha n$, given any constant $C$, with probability bounded away from $0$, $D \cup D(n,C/n)$ contains a linear number of vertices
with outdegree $0$ and a linear number of vertices
with indegree $0$, so it will not contain $H$. 

On the other hand, we show that this type of orientation of a Hamilton cycle is the only one we cannot guarantee. That is, our desired relaxation is possible for all orientations of a Hamilton cycle that contain a linear number of vertices of in- or outdegree $2$.

\begin{theorem}\label{thm:main}
 For every $\alpha , \eta> 0$, there is a~$C=C(\alpha, \eta)$ such that if $D_0$ is an $n$-vertex digraph of minimum total degree $\delta(D_0) \geq 2\alpha n$, then $D_0 \cup D(n,C/n)$ a.a.s.~contains every orientation of a cycle of every length between $2$ and $n$ that contains at most $(1-\eta) n$ vertices of indegree $1$.
\end{theorem}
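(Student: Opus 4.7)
The plan is to adapt Montgomery's absorbing method to the oriented randomly-perturbed setting under the weaker minimum total degree hypothesis. Split $D(n, C/n)$ into three independent binomial digraphs $G_1, G_2, G_3$ with parameters summing to $C/n$, to be used for an absorbing structure, a path cover, and a connecting/closing step, respectively. Using the total degree condition, partition $V(D_0) = V^+ \sqcup V^-$ where $V^+ := \{v : d^+_{D_0}(v) \geq \alpha n\}$; every $v \in V^-$ then satisfies $d^-_{D_0}(v) \geq \alpha n$. Fix the target oriented cycle $H$ of length $\ell$ with at most $(1-\eta)n$ indegree-$1$ vertices, and call a vertex of $H$ a \emph{source} (resp.\ \emph{sink}) if both its cycle edges leave (enter) it. In the main case $\ell = (1-o(1))n$, $H$ has at least $\eta n/2$ sources and $\eta n/2$ sinks, hence at least $\eta n$ maximal consistently oriented sub-paths (\emph{arcs}); much shorter cycles can be found in $G_1 \cup G_2 \cup G_3$ directly by a standard second moment argument.

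The first main step is to construct, using $G_1$, a Montgomery-style absorbing structure $\mathcal{A}$ of sublinear size: an embedded sub-arc of $H$ together with many constant-sized absorption gadgets, each designed to insert one extra vertex of prescribed $V^+/V^-$ type into the cycle while preserving the prescribed local orientation of $H$. The existence of $\mathcal{A}$ a.a.s.\ follows from a template/gadget probabilistic argument in the style of Montgomery, adapted so that the gadgets respect the directed $(V^+, V^-)$ bipartition. Taking a union bound over the $2^{O(n)}$ possible orientations, a single $\mathcal{A}$ serves all target cycles $H$ simultaneously.

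The second main step is to reserve disjoint sets $S \subseteq V^+$ and $T \subseteq V^-$ of sizes matching the numbers of sources and sinks of $H$, and cover $V \setminus (V(\mathcal{A}) \cup S \cup T)$ by vertex-disjoint directed paths in $D_0 \cup G_2$: one path per arc of $H$, each starting in $S$ and ending in $T$ with the prescribed interior length. Internal vertices are assigned greedily: at a $V^+$-vertex, extend via one of its $\geq \alpha n$ $D_0$-out-neighbors; at a $V^-$-vertex, extend via one of its $\Theta(C)$ $G_2$-out-edges. The $\eta n$ arcs provide enough flexibility for a Hall-type matching argument to keep the random edge demand within the $\Theta(Cn)$ $G_2$-edge budget. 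Finally, $G_3$ together with $\mathcal{A}$ stitch the arc-paths into a single cycle realizing $H$, absorbing any uncovered vertices through the gadgets of $\mathcal{A}$.

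The main obstacle is the arc-routing step: in adversarial configurations such as a complete bipartite host oriented $V^+ \to V^-$, every arc must consume a $G_2$-edge at each $V^-$-internal vertex, and one must balance the assignment of $V^-$-vertices across the $\eta n$ arcs against the $G_2$-edge budget. This forces a joint analysis of the matchings in $G_2$ and the greedy embedding process, going beyond what is needed in the semi-degree setting of Theorem~\ref{thm::semideg_simple}, where $D_0$ itself provides out-edges at every vertex.
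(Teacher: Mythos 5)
Your proposal takes a genuinely different route from the paper, and the key step where it diverges also contains a serious gap.

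The paper does not split $D(n,C/n)$ nor does it try to embed directed arcs of $H$ one by one in $D_0 \cup D(n,C/n)$. Instead it first applies the McDiarmid/Montgomery coupling (Lemma~\ref{lem::coupling}) to pass to $D_0 \cup D^*(n,C/n)$, where $D^*$ is the model with \emph{bidirected} random edges. This is the crucial move: $D^*(n,C/n)$ is a.a.s.\ $\eps n$-bipseudorandom, so it contains a long \emph{bidirected} path covering almost all vertices, and a bidirected path can realize any orientation of a sub-path of $\mathcal{C}$ on the spot. In the total-degree setting, the paper refines this with the notion of a \emph{swap vertex} (indegree $0$ or $2$) and Lemma~\ref{lem::total:longpath}, which produces a long bidirected path whose two endpoints have prescribed $V^+/V^-$ types, so the endpoints are $\alpha$-compatible with the absorbing path $P$. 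The absorber (Lemma~\ref{lem::total:global}) then handles the leftover vertices; its local gadgets exploit the $\geq \eta(|V(P)|-2)$ swap vertices to sandwich in vertices of either $V^+$ or $V^-$ type. The hypothesis on indegree-$1$ vertices is used precisely here, to guarantee enough swap vertices in $P$ and, via a careful pigeonholing over sub-segments $P^1_i$, enough swap vertices in the right places.

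Your plan replaces the long bidirected path by ``one directed path per maximal consistently oriented arc of $H$, embedded greedily in $D_0\cup G_2$.'' This is where the gap is. There are $\Theta(\eta n)$ arcs, but their total length is $n$, so in the worst case arcs have average length $\Theta(1/\eta)$ and some are much longer. In a forward-oriented arc, every internal $V^-$-vertex has no usable $D_0$-out-edge, so each such step consumes a $G_2$-out-edge from a pool of only $\Theta(C)$ choices. Greedy extension with $\Theta(C)$ choices per step is not enough to route a spanning family of vertex-disjoint directed paths: you need to hit a prescribed vertex set exactly (including the last few unused vertices), avoid collisions across $\Theta(\eta n)$ concurrently-built paths, and land each arc at a specified endpoint in $T$, all while using $\Theta(Cn)$ random edges in total. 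You flag this as ``a Hall-type matching argument,'' but Hall's theorem gives a bipartite matching, not a family of directed paths through arbitrary subsets of $V^-$; making this rigorous would essentially require a second absorbing-style or rotation-extension argument for the cover step itself, which is the hard part you are trying to avoid. The paper's use of $D^*$ sidesteps this entirely: because the covering path is bidirected, \emph{every} internal vertex has both an out- and an in-edge available along the path, independent of the $V^+/V^-$ type and independent of the orientation of $H$.

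Two smaller issues. First, short cycles of a fixed orientation are not a.a.s.\ present in $D(n,C/n)$ alone (the expected count of any $O(1)$-cycle is $O(1)$, not $\to\infty$), so the ``standard second moment argument'' must involve $D_0$; the paper handles $|\mathcal{C}|=2,3$ separately and with care, e.g., counting triangles with one $D_0$-edge and two random edges. Second, your absorber sketch says the gadgets ``insert one extra vertex of prescribed $V^+/V^-$ type while preserving the local orientation of $H$.'' The subtlety the paper addresses (Section~\ref{sec::total}, Lemma~\ref{lem::total:local}) is that the absorbed vertex must be slotted in at a \emph{swap vertex} of the appropriate sign, and such swap vertices must exist in the segment of $H$ housing the gadget. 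That is exactly why the $\leq (1-\eta)n$ indegree-$1$ hypothesis is needed in the absorber construction, not just in the arc-counting. Your sketch uses the hypothesis only to count arcs, which is not where the paper's argument actually needs it.
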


The proof of Theorem~\ref{thm:main} has the same core ideas as the proof of Theorem~\ref{thm::semideg_simple}, but there are additional complications and technicalities that come with the weakened degree condition.

\smallskip

{\bf \noindent Notation.}
Throughout this paper we omit floors and ceilings whenever this
does not affect the argument.
Given a digraph $D$ we write $V(D)$ and $E(D)$ for its vertex and edge sets respectively. 
Given some $X \subseteq V(D),$ we write $D[X]$ for the induced subdigraph of $D$ with vertex set $X$.
Given some $x \in V(D)$, $N_D^+(x)$ denotes the out-neighborhood of $x$ in $D$, which is the set of vertices $y\in V(D)$ for which $\forw{xy} \in E(D)$; the outdegree of $x$ in $D$ is denoted by $d_D^+(x) := |N_D^+(x)|$. We define $N_D^-(x)$ and $d_D^-(x)$ analogously, and often omit the subscript when the digraph $D$ considered is clear from the context.

We write $\overleftrightarrow{u v}$ if $\forw{uv}$ and 
$\back{uv}$ are edges and call $\overleftrightarrow{u v}$ a \emph{bidirected edge}.
A \emph{bidirected path} is a digraph obtained from an undirected path by replacing each 
edge $uv$ with a bidirected edge $\overleftrightarrow{u v}$. An \emph{oriented path} is a 
 digraph obtained from an undirected path by replacing each 
edge $uv$ with a single directed edge; either $\forw{uv}$ or $\back{uv}$.
Given an oriented or bidirected path $P=(u_1,\dots,u_k)$ we call $u_1$ its \emph{startpoint} and $u_k$ its \emph{endpoint}, distinguishing it from the path $(u_k, \dots, u_1)$.

Given an oriented path $P = (u_1, \dots, u_k)$, we define $\sigma(u_i u_{i+1})$ to be $+$ if $\forw{u_i u_{i+1}} \in E(P)$ and $-$ otherwise. Given any $i<j$, when clear from the context, we  write $(u_i,\dots, u_j)$ to mean the oriented  subpath of $P$ on vertices $u_i,\dots, u_j$; so crucially, the edges in $(u_i,\dots, u_j)$ are oriented precisely as in $P$.

Given two oriented paths $P = (u_1, \dots, u_k)$ and $P' = (u_1', \dots, u_{k'}')$ with $u_k = u_1'$ and $V(P) \cap V(P') = \{u_k\}$, the \emph{concatenation of $P$ and $P'$}, denoted $P \circ P'$, is the path $(u_1, \dots, u_k, u_2', u_3', \dots, u_{k'}')$.

\smallskip

The paper is organized as follows. In the next section we give an outline of the proof of Theorem~\ref{thm::semideg_simple}.
In Section~\ref{sec::ingredients} we collect together various properties of random and pseudorandom digraphs.
The main work of the paper is the proof of our absorbing lemmas, one for each of our two theorems, which are given in Section~\ref{sec::absorber} for Theorem~\ref{thm::semideg_simple} and Section~\ref{sec::total} for Theorem~\ref{thm:main}.
We prove Theorems~\ref{thm::semideg_simple} and~\ref{thm:main} in Sections~\ref{sec::proof} and~\ref{sec::totalproof}, respectively. In Section~\ref{sec::conc} we give some concluding remarks.

\section{Overview of the proof of Theorem~\ref{thm::semideg_simple}}\label{subsec::ideas}

Our goal is to show that for a given orientation $\mathcal{C}$ of a cycle, $D_0 \cup D(n,C/n)$ contains $\mathcal{C}$ with probability at least $1-e^{-n}$. Theorem~\ref{thm::semideg_simple} follows from a union bound over all choices of $\mathcal{C}$, of which there are trivially at most $n2^n$. For the rest of this section we consider only spanning $\mathcal{C}$, as the non-spanning cycle case follows easily from the machinery we set up to deal with arbitrary orientations of a Hamilton cycle.

Let $D^*(n,p)$ denote the random digraph with vertex set $[n]$ where each possible pair of edges $\forw{uv}$ and $\back{uv}$ are included together, independently of other edges, with probability $p$. In this way $D^*(n,p)$ is the same as the binomial random graph $G(n,p)$ where we replace every undirected edge with a bidirected edge. Via a coupling argument from~\cite{McDia,mont}, to prove that $D_0 \cup D(n,C/n)$ contains $\mathcal{C}$ with probability at least $1-e^{-n}$, it suffices to show that $D_0 \cup D^*(n,C/n)$ contains $\mathcal{C}$ with probability at least $1-e^{-n}$; see Lemma~\ref{lem::coupling} for the precise statement.
This latter goal will be achievable as we only need to access the randomness in $D^*(n,C/n)$ through a simple pseudorandom property that is easily shown to hold with probability at least $1-e^{-n}$; see Definition~\ref{def::pseudo}. 

Our argument applies the absorbing method, a technique that was introduced systematically by R\"odl, Ruci\'nski, and Szemer\'edi~\cite{RRS}, but that has roots in earlier work (see, e.g.,~\cite{kriv}).

\subsection{A problem with absorbing}
To highlight a key challenge we face with absorbing, we first describe a natural approach to absorbing in the case of a consistently oriented Hamilton cycle. We note  though that absorbing was not the approach used in~\cite{bfm1} to prove Theorem~\ref{bfmthm}.

In this case,
a `global absorber' in $D_0 \cup D^*(n,C/n)$ 
is a structure $A$ on a small (but linear size) vertex set with the property that for every sufficiently small set of vertices $R$, $A \cup R$ contains the consistently oriented path on $|V(A) \cup R|$ vertices
with prescribed startpoint and endpoint in $R$. If we can obtain such a structure $A$, then we can proceed as follows:
by applying the pseudorandom property
of $ D^*(n,C/n)$ we find a bidirected path $Q$ in $ D^*(n,C/n)$ disjoint from $A$ that covers almost all of the vertices not in $A$.
Let $R$ be the set of vertices consisting of the startpoint $x$ and endpoint $y$ of $Q$, together with all those vertices not in $Q$ or $A$. Using the absorbing property of $A$ we ensure that there is a consistently oriented 
path $Q_R$ on $V(A) \cup R$ with startpoint $y$ and endpoint $x$.
Joining the startpoints and endpoints of $Q$ and $Q_R$, we obtain a consistently oriented Hamilton cycle.

In this setting of consistently oriented Hamilton cycles, one can build the global absorber $A$ from a consistently oriented path $Q_A$ with the following property. Given \emph{any} very small (but linear size) collection of vertices $R$,
we can find an ordering of the vertices
$w_1,\dots,w_t$ in $R$, and disjoint edges $\forw{x_iy_i}$ along $Q_A$ for each $i \in [t]$ where (i) if $i<j$ then $\forw{x_iy_i}$ comes before 
$\forw{x_jy_j}$ on $Q_A$; (ii) $\forw{x_iw_i}$ and $\forw{w_iy_i}$ are edges in $D_0$ for all $i \in [t]$. In this case, we can
`sandwich' in $w_i$ between $x_i$ and $y_i$ on $Q_A$, for all $i \in [t]$, to obtain a consistently oriented path on $V(Q_A) \cup R$. 
One can show such an oriented path $Q_A$ exists, and this forms the heart of the global absorbing set $A$.\footnote{Further details are required to ensure the `prescribed startpoint and endpoint' property of the global absorbing set.}

For an arbitrary orientation of a Hamilton cycle $H$, one may try to modify this argument. Indeed, fix some linear size oriented path $P_H$ which is a segment of $H$. We would like to find an oriented path $Q_A$ in $D_0 \cup D^*(n,C/n)$ that has the property that after adding any very small arbitrary set $R$ of vertices to $V(Q_A)$, there is a copy of $P_H$ precisely covering the vertices in $V(Q_A) \cup R$.

To illustrate the difficulty for arbitrary orientations, choose two very small sets of vertices $R$ and $R'$, both of which contain some fixed vertex $w$. Suppose we have constructed a path $Q_A$ that does absorb both $R$ and $R'$ analogously to the consistently oriented case. Then depending on how we have ordered $R$ and $R'$, $w$ might have to play the role of a different vertex along $P_H$.
More precisely, suppose $w$ is the $j$th vertex in the ordering of $R$ and the $k$th vertex in the ordering of $R'$ where $j<k$. Then for $R'$ we will be sandwiching in more vertices before $w$ along $P_A$ than compared to $R$. This means that the vertex in $P_H$ that $w$ plays the role of will be different in the $R$ and $R'$ cases. In particular, perhaps in the $R$ case, $w$ will need to play the role of a vertex in $P_H$ with outdegree $2$, whilst in the $R'$ case, $w$ will need to play the role of a vertex with indegree $2$.
Furthermore, this cascading effect also means a vertex along $Q_A$ may have to play the role of a different vertex in $P_H$ depending on the choice of $R$.

Of course, this would not be an issue if all the edges considered were bidirected. In that case, no matter where we sandwich in the vertices of $R$ or $R'$ in $Q_A$, we have all the necessary edges to find a copy of $P_H$, no matter how $P_H$ is oriented. Note that $D^*(n,C/n)$ by itself is too sparse to guarantee such a structure. For example, a.a.s. $D^*(n,C/n)$ does not contain a triangle containing a fixed vertex $w$, and if we were to sandwich $w$ between two consecutive vertices $x_i$ and $y_i$ along $Q_A$, then $x_i,y_i,w$ must form a triangle.
Moreover, $D_0$ may not contain any bidirected edges at all.
However, it turns out that we can guarantee that \emph{almost} all the edges along $Q_A$ are from $D^*(n,C/n)$ and so are bidirected. 
The problem is that we will have to take the edges between $R$ and $Q_A$ to be deterministic, that is, from $D_0$. 

If there are many pairs of consecutive vertices $x_i,y_i$ along $Q_A$ which we can sandwich $w$ between, then this gives us some choice about how many other vertices we absorb before $w$ along the path $Q_A$, potentially giving us the freedom to restrict which vertices of $P_H$ we require $w$ to play the role of.
However, in our situation, $D_0$ may not be very dense, so in general it is not the case that there is a choice of $Q_A$ so that for every vertex $w$ outside of $Q_A$, there are enough edges between $w$ and $Q_A$ in $D_0$ for this strategy to work.

As explained shortly, we will get around this problem by constructing $Q_A$ in a more sophisticated way so that ($\alpha$) $Q_A$ is only used to absorb \emph{certain} vertices, and ($\beta$) $Q_A$ has some in-built structure so that
if we absorb a vertex $w$, it must \emph{always} play the role of one of only a constant number of vertices along the path $P_H$ in $H$, no matter what the set of vertices $R$ actually is. In particular, ($\beta$) ensures that we do not need bidirected edges between $R$ and $Q_A$; rather, for a constant number of pairs of consecutive vertices $x_i,y_i$ along $Q_A$, we need single edges of the correct orientation between $\{x_i,y_i\}$ and $w$ so we can sandwich $w$ in between the two.

\subsection{Montgomery's absorbing method}
Montgomery~\cite{monty, mont_embedding} introduced an approach to absorbing that has already found a number of applications, for example, to spanning trees in random graphs~\cite{monty}, decompositions of Steiner triple systems~\cite{ferberkwan}, and tilings in randomly perturbed graphs~\cite{hmt}.
The basic idea of the method is to build a global absorber using a special graph $H_m$ as a framework. The  bipartite graph $H_m$ has a  bounded maximum degree with vertex classes $X \cup Y$ and $Z$, and has the property that if one deletes \emph{any} set of vertices of a given size from $X$, then the resulting graph contains a perfect matching; see Lemma~\ref{lem::Mont-graph}.

Roughly speaking, a global absorber is usually built from $H_m$ as follows: every edge $xy$ in $H_m$ is `replaced' with a `local absorber' $A_{xy}$ in such a way that all such absorbers $A_{xy}$ are vertex-disjoint. Here a local absorber $A_{xy}$ is some small gadget that can absorb
a certain (constant size) set of vertices $S_{xy}$.

A reason why this approach has found many applications is that, in some sense, it allows one to construct a global absorber in the case when one can only find `few' local absorbers, where what is meant by `few' here depends on the precise setting.

In the proofs of Theorems~\ref{thm::semideg_simple} and~\ref{thm:main} we will use $H_m$ again as a framework to build a global absorber. The
reason we use $H_m$, however, is different from most applications of the method (although morally the reason is similar to why Montgomery used this method in~\cite{monty}). In particular, the key idea is that one can use this framework as a way of guaranteeing property ($\beta$) above. More precisely, in our case we will replace \emph{every} edge in $H_m$ incident to $z \in Z$ with the \emph{same} local absorbing gadget $A_z$. Here $A_z$ is not designed to absorb a fixed set of vertices like before; rather, it has some local flexibility about what vertices it will absorb; see Definition~\ref{def::local_abs}. The idea is that constructing the global absorber in this way gives us the flexibility to know in advance precisely which (constant size) set of vertices on $P_H$ an absorbed vertex $w$ can play the role of. 

We emphasize that this version of Montgomery's method should be useful when trying to apply absorption to embed any spanning structure in a digraph that does not have some `nice' orientation.

\section{Random digraph ingredients}\label{sec::ingredients}

Recall that $D(n,p)$ is the digraph with vertex set $[n]$ where each of the $n(n-1)$ possible directed edges is present with probability $p$, independently of all other edges; $D^*(n,p)$ is the digraph with vertex set $[n]$ where each possible pair of edges $\forw{uv}$ and $\back{uv}$ are included together, independently of other edges, with probability $p$. 

We will use the following result, observed by Montgomery~\cite[Theorem 3.1]{mont} as a consequence of McDiarmid's coupling argument~\cite{McDia}. Recall that an {oriented graph} is a digraph in which there is at most one edge between any pair of vertices.

\begin{lemma}[\cite{McDia,mont}]\label{lem::coupling}
Let $p \in [0, 1]$ and $n \in \mathbb{N}$. Let $\cH$ be a set of oriented graphs with vertex set $[n]$ and let $D_0$ be a digraph with vertex set $[n]$. Then 
$$\Prob(\exists H \in \cH: H \subset D_0 \cup D(n,p)) \ge \Prob(\exists H \in \cH: H \subset D_0 \cup D^*(n,p))\,.$$
\end{lemma}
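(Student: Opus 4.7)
The plan is to prove the inequality by a pair-by-pair interpolation. Fix an arbitrary ordering of the $N := \binom{n}{2}$ unordered pairs of vertices in $[n]$. For $0 \le k \le N$, let $\mu_k$ denote the law of the digraph obtained by sampling, independently across pairs, the first $k$ pairs according to the $D(n,p)$ rule (the two directed edges on such a pair are present independently, each with probability $p$) and the remaining $N-k$ pairs according to the $D^*(n,p)$ rule (with probability $p$ both directed edges are present, otherwise neither). Then $\mu_N$ is the law of $D(n,p)$ and $\mu_0$ is the law of $D^*(n,p)$, so setting $f(k) := \Prob_{D \sim \mu_k}\bigl(\exists H \in \cH : H \subseteq D_0 \cup D\bigr)$, it suffices to prove $f(k) \ge f(k-1)$ for every $1 \le k \le N$.

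To compare $f(k)$ and $f(k-1)$, condition on the states of all pairs except the $k$th pair, which we denote $\{u,v\}$. For a subset $S \subseteq \{\forw{uv}, \back{uv}\}$, let $g(S) \in \{0,1\}$ indicate whether some $H \in \cH$ is a subdigraph of $D_0$ together with the fixed edges on the remaining pairs and the edges in $S$. A short calculation shows that the marginal on the state of $\{u,v\}$ under $\mu_k$ differs from the marginal under $\mu_{k-1}$ by moving mass $p(1-p)$ from each of $\emptyset$ and $\{\forw{uv}, \back{uv}\}$ to each of the singletons $\{\forw{uv}\}$ and $\{\back{uv}\}$. Consequently
\[
f(k) - f(k-1) = p(1-p) \cdot \mathbb{E}\bigl[\, g(\{\forw{uv}\}) + g(\{\back{uv}\}) - g(\emptyset) - g(\{\forw{uv}, \back{uv}\}) \,\bigr],
\]
with the outer expectation taken over the states of the remaining pairs.

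The heart of the argument is to show that the bracketed integrand is pointwise nonnegative. By monotonicity of subdigraph containment, if $g(\emptyset) = 1$ then all four values of $g$ equal $1$, and if $g(\{\forw{uv}, \back{uv}\}) = 0$ then all four values equal $0$; in either case the bracket is zero. The remaining case is $g(\emptyset) = 0$ and $g(\{\forw{uv}, \back{uv}\}) = 1$, in which some $H \in \cH$ becomes a subdigraph only after the pair $\{u,v\}$ is added, so $H$ uses at least one directed edge on $\{u,v\}$. Since $H$ is an oriented graph, it uses exactly one of $\forw{uv}$ or $\back{uv}$, and hence $H$ is already realised in the corresponding singleton state, giving $g(\{\forw{uv}\}) + g(\{\back{uv}\}) \ge 1 = g(\{\forw{uv}, \back{uv}\})$. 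The bracket is therefore at least $0$.

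This yields $f(k) \ge f(k-1)$ for every $k$, and telescoping gives $f(N) \ge f(0)$, which is exactly the lemma. I expect the main obstacle to be the per-pair structural step above; as the last paragraph shows, it is precisely the hypothesis that each $H \in \cH$ is \emph{oriented} that makes it work, since a digraph $H$ containing both $\forw{uv}$ and $\back{uv}$ would force $g(\{\forw{uv}\}) = g(\{\back{uv}\}) = 0$ while $g(\{\forw{uv}, \back{uv}\}) = 1$, turning the bracket negative.
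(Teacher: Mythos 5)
Your proof is correct and is essentially a self-contained reconstruction of McDiarmid's coupling argument, which is exactly what the paper cites (via Montgomery) rather than reproving: the pair-by-pair interpolation with a single-coordinate switching step, and the observation that the bracketed quantity is nonnegative precisely because each $H \in \cH$ is oriented and hence uses at most one directed edge on $\{u,v\}$. The marginal computation (moving mass $p(1-p)$ from $\emptyset$ and $\{\protect\forw{uv},\protect\back{uv}\}$ to each singleton) and the three-case monotonicity argument are both right, so the telescoping does give $f(N)\ge f(0)$.
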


Note the direction of the inequality in the conclusion of Lemma~\ref{lem::coupling}. The obvious coupling between these two models gives
\[ \Prob(\exists H \in \cH : H \subseteq D_0 \cup D(n,p)) \leq \Prob(\exists H \in \cH : H \subseteq D_0 \cup D^*(n,2p-p^2)) ,\]
where the inequality is in the opposite direction but the edge-probabilities for the two models are different.

For our purposes, $\cH$ will consist of all possible copies of a single specific orientation of a cycle $\mathcal{C}$. Lemma~\ref{lem::coupling} says that it is at least as difficult to find $\mathcal{C}$ in $D_0 \cup D^*(n,p)$ as it is in $D_0 \cup D(n,p)$. By showing that $D_0 \cup D^*(n,p)$ contains $\mathcal{C}$ with probability at least $1-e^{-n}$, we can use a union bound to show that a.a.s. $D_0 \cup D(n,p)$ contains all our desired orientations of a cycle of every length. 

As is often the case with random (di)graph arguments, we only access the randomness through a particular sparse pseudorandom property.

\begin{definition}[Pseudorandom]\label{def::pseudo} \rm
For $1\le t \le n/2$, an $n$-vertex digraph $D$ is \emph{$t$-pseudorandom} if for every $U,W \subseteq V(D)$ with $|U|=|W|=t$ and $U \cap W = \emptyset$, there is an edge~$\forw{uw}$ directed from $U$ to $W$.
Moreover, if $D$ contains both~$\forw{uw}$ and $\back{uw}$ for every such $U$ and $W$, then we call it~\emph{$t$-bipseudorandom}. 
\end{definition}
Ben-Eliezer, Krivelevich, and Sudakov~\cite[Claim 4.3 and Lemma 4.4]{BKS} proved versions of the following two lemmas for $t$-pseudorandom digraphs.
The proofs for the~$t$-bipseudorandom versions are identical, so we omit them.
\begin{lemma}[Connecting Lemma]\label{lem::connecting}
Suppose that $D$ is a $t$-bipseudorandom digraph and  $B_1, \dots, B_\ell \subseteq V(D)$ are  pairwise disjoint sets with $|B_i| \geq 2t$ for every $i \in [\ell]$. Then there is a bidirected path $(v_1, \dots, v_\ell)$ in $D$ with $v_i \in B_i$ for every $i \in [\ell]$.
\end{lemma}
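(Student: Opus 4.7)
The plan is to run a backward reachability argument along the sequence $B_1, \dots, B_\ell$. Define $R_\ell := B_\ell$ and, for $i < \ell$, let $R_i \subseteq B_i$ consist of those $v \in B_i$ for which there exists $w \in R_{i+1}$ such that both $\forw{vw}$ and $\back{vw}$ belong to $E(D)$. Intuitively, $R_i$ is the set of candidate startpoints in $B_i$ of a bidirected path that picks one vertex from each of $B_i, B_{i+1}, \dots, B_\ell$ in order; once we know $R_1 \neq \emptyset$, the path itself falls out greedily.

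The key claim I would prove is that $|R_i| \geq t + 1$ for every $i \in [\ell]$, by reverse induction on $i$. The base case $i = \ell$ is immediate since $|R_\ell| = |B_\ell| \geq 2t$. For the inductive step, assume $|R_{i+1}| \geq t+1 \geq t$, and suppose toward a contradiction that $|B_i \setminus R_i| \geq t$. Then I can pick any $t$-subset $U \subseteq B_i \setminus R_i$ and any $t$-subset $W \subseteq R_{i+1}$; since $B_i$ and $B_{i+1}$ are disjoint, so are $U$ and $W$. Applying the $t$-bipseudorandom hypothesis to the pair $(U, W)$ produces $u \in U$ and $w \in W$ with both $\forw{uw}$ and $\back{uw}$ in $E(D)$, but then $u$ satisfies the definition of $R_i$ (witnessed by $w \in R_{i+1}$), contradicting $u \notin R_i$. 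Hence $|B_i \setminus R_i| \leq t - 1$, and so $|R_i| \geq |B_i| - (t-1) \geq 2t - (t-1) = t+1$, closing the induction.

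With the claim in hand, the lemma follows immediately: pick any $v_1 \in R_1$, and having chosen $v_i \in R_i$, pick $v_{i+1} \in R_{i+1}$ for which $v_i v_{i+1}$ is a bidirected edge, which exists by the very definition of $R_i$. The pairwise disjointness of the $B_i$ ensures that the resulting vertices $v_1, \dots, v_\ell$ are distinct, so $(v_1, \dots, v_\ell)$ is a bidirected path of the desired form. There is no real obstacle here; the only subtle point is to verify that the inductive lower bound on $|R_{i+1}|$ always remains at least $t$, which is exactly what the hypothesis $|B_i| \geq 2t$ provides (it leaves room to pay the $t-1$ ``loss'' at each step and still have a nonempty $R_1$).
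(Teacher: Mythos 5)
Your proof is correct. The paper itself omits the proof of this lemma, citing Ben-Eliezer, Krivelevich, and Sudakov (Claim 4.3 and Lemma 4.4 of~\cite{BKS}) and remarking that the bipseudorandom version follows identically; your backward-reachability induction is precisely the standard argument used there, so you have reproduced the approach the paper implicitly relies on.
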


\begin{lemma}\label{lem::longpath}
If $D$ is an $n$-vertex $t$-bipseudorandom digraph, then $D$ has a bidirected path on at least $n-2t$ vertices.
\end{lemma}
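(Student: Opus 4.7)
The approach is to adapt the P\'osa rotation-extension argument of Ben-Eliezer, Krivelevich, and Sudakov~\cite[Lemma 4.4]{BKS}, which establishes the analogous bound for longest \emph{directed} paths in $t$-pseudorandom digraphs, to our bidirected setting. The translation is mechanical: any rotation of a bidirected path uses only edges of that path, which are bidirected, and therefore produces another bidirected path; and $t$-bipseudorandomness guarantees a bidirected edge between any two disjoint $t$-subsets exactly where the directed version would invoke a directed edge.

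First, I would suppose for contradiction that a bidirected path $P = (v_1, \ldots, v_\ell)$ of maximum length in $D$ has $\ell < n - 2t$, so that $R := V(D) \setminus V(P)$ satisfies $|R| > 2t$. Maximality immediately rules out any bidirected edge from $v_1$ or $v_\ell$ to $R$, since otherwise we could append the corresponding vertex of $R$ to extend $P$.

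Next, I would perform P\'osa rotations at the right endpoint of the path: for each bidirected edge from the current right endpoint to a non-adjacent interior vertex $v_j$, the rerouted path is bidirected, has the same length $\ell$, and has a new right endpoint (namely the neighbor of $v_j$ opposite to the original endpoint). Iterating, let $E \subseteq V(P)$ denote the set of vertices that arise as the right endpoint of some bidirected path of length $\ell$ on vertex set $V(P)$ obtained in this way. The same maximality argument gives that no vertex of $E$ has a bidirected neighbor in $R$. The crux is to establish $|E| \ge t$; once this holds, applying $t$-bipseudorandomness to a $t$-subset of $E$ and a $t$-subset of $R$ produces a bidirected edge between them, contradicting the fact that no vertex of $E$ has a bidirected neighbor in $R$.

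The main obstacle is this expansion step, namely showing that the rotations do grow $E$ to size at least $t$ rather than stabilising earlier. The key input is the elementary consequence of $t$-bipseudorandomness that every $t$-subset $W \subseteq V(D)$ satisfies $|N^+(W) \cap N^-(W)| \ge n - 2t + 1 > \ell$; hence every $t$-subset of $V(P)$ has a bidirected neighbor in $R$. By invoking this property at each stage of the rotation process with an appropriately chosen $t$-subset (padding the current $E$ by further vertices of $V(P)$ as needed), one derives in each stage either a direct extension of $P$ (a contradiction) or a new endpoint that enlarges $E$. As noted by the authors, this scheme is mechanically identical to its directed counterpart in~\cite{BKS} after replacing every directed edge by a bidirected edge, which is why the proof is omitted in the paper.
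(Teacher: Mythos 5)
Your overall strategy differs from the source the paper actually relies on: Ben-Eliezer, Krivelevich, and Sudakov prove their Lemma 4.4 by a depth-first search argument, not by rotation--extension (indeed, rotations cannot be used for \emph{directed} paths, since rerouting reverses a segment of the path and hence the orientations of its edges). The DFS proof transfers verbatim to the bidirected setting: run DFS on the graph of bidirected edges; at every moment there is no such edge between the set $S$ of finished vertices and the set $T$ of unexplored vertices, and since $|T|-|S|$ changes by exactly one at each step there is a moment with $|S|=|T|$, whence $t$-bipseudorandomness forces $|S|=|T|\le t-1$ and the DFS stack is a bidirected path on at least $n-2t$ vertices. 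This is what ``the proofs are identical'' means in the paper.

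Your rotation argument has a genuine gap exactly at the step you identify as the crux, namely showing $|E|\ge t$. P\'osa's lemma gives an \emph{upper} bound $|N^{\bi}(E)\setminus E|\le 2|E|-1$, so to conclude $|E|\ge t$ you need a \emph{lower} bound on $|N^{\bi}(E)|$, i.e.\ expansion of $E$ itself. But $t$-bipseudorandomness says nothing whatsoever about neighbourhoods of sets of size less than $t$: for $t\ge 2$ a $t$-bipseudorandom digraph may contain a vertex incident to no bidirected edge at all, and the endpoint $v_\ell$ of a longest bidirected path may have bidirected degree $1$, in which case no rotation is possible and $E=\{v_\ell\}$ forever. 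Your proposed fix --- padding $E$ to a $t$-set $W$ and invoking the property that $W$ has a bidirected neighbour in $R$ --- does not help, because that neighbour may be adjacent only to a padding vertex, which neither extends $P$ nor produces a new endpoint. (In fact, under your contradiction hypothesis one can show $|E|\le t-1$ is forced, so the bound $|E|\ge t$ is precisely the entire content of the lemma and cannot be obtained from rotations alone with this hypothesis.) Replace the rotation scheme by the DFS argument above.
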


In order to use the previous lemmas, we observe that $D^*(n,C/n)$ is $\ep n$-bipseudorandom with very high probability. We typically assume that $\ep n$ is an integer and ignore inconsequential rounding.

\begin{lemma}\label{lem::pseudo}
Let $0<\ep <1/2$ and let $C \geq \frac{4}{\ep} \log\frac{e}{\ep}$. Then, with probability at least $1-\exp(-C\ep^2 n/2)$, the random digraph $D^*(n,C/n)$ is $\ep n$-bipseudorandom.
\end{lemma}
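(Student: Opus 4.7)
The plan is a straightforward first-moment / union-bound argument. First, I would reinterpret the event. Because in $D^*(n,C/n)$ the two edges $\forw{uw}$ and $\back{uw}$ appear as a pair with probability $C/n$ (independently over unordered pairs), the statement ``there exist $u\in U$, $w\in W$ with both $\forw{uw}$ and $\back{uw}$ present'' is equivalent to ``there is at least one bidirected edge between $U$ and $W$.'' Thus $D^*(n,C/n)$ fails to be $\varepsilon n$-bipseudorandom iff there is a disjoint pair $(U,W)$ with $|U|=|W|=t:=\varepsilon n$ that spans no bidirected edge.

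Next I would estimate the probability of failure for a fixed pair. Since each of the $t^2$ pairs $(u,w)\in U\times W$ independently carries a bidirected edge with probability $C/n$,
\[
\Prob(\text{no bidirected edge between }U\text{ and }W)=(1-C/n)^{t^2}\le e^{-Ct^2/n}=e^{-C\varepsilon^2 n}.
\]
Then I would union-bound over all choices of $(U,W)$. The number of disjoint ordered pairs of $t$-subsets is at most $\binom{n}{t}^2\le (en/t)^{2t}=(e/\varepsilon)^{2\varepsilon n}$, so
\[
\Prob(D^*(n,C/n)\text{ not }\varepsilon n\text{-bipseudorandom})\le \exp\!\bigl(2\varepsilon n\log(e/\varepsilon)-C\varepsilon^2 n\bigr).
\]

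Finally I would plug in the hypothesis $C\ge \tfrac{4}{\varepsilon}\log(e/\varepsilon)$, which gives $2\varepsilon\log(e/\varepsilon)\le C\varepsilon^2/2$ and hence
\[
2\varepsilon n\log(e/\varepsilon)-C\varepsilon^2 n\le -C\varepsilon^2 n/2,
\]
yielding the claimed bound $\exp(-C\varepsilon^2 n/2)$. There is no genuine obstacle here: the only thing to be careful about is the edge-pairing in $D^*$, which is precisely what makes the event factor into independent Bernoulli trials across unordered pairs; after that the estimate is just $\binom{n}{t}^2 (1-p)^{t^2}\le e^{-C\varepsilon^2 n/2}$, and the constant $4$ in the hypothesis on $C$ is chosen exactly to beat the entropy term $(e/\varepsilon)^{2\varepsilon n}$ by a factor of $2$.
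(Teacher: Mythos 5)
Your argument is correct and is essentially the same as the paper's: both fix a disjoint pair $(U,W)$ of $\varepsilon n$-sets, bound the probability of no bidirected edge between them by $(1-C/n)^{(\varepsilon n)^2}\le e^{-C\varepsilon^2 n}$, union-bound over the at most $\binom{n}{\varepsilon n}^2\le (e/\varepsilon)^{2\varepsilon n}$ pairs, and use $C\ge\frac{4}{\varepsilon}\log\frac{e}{\varepsilon}$ to absorb the entropy term and land on $\exp(-C\varepsilon^2 n/2)$. The only cosmetic difference is that you spell out explicitly why the $t^2$ cross-pairs are independent Bernoulli trials in $D^*$, which the paper takes for granted.
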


\begin{proof}
Let $B_1$ and $B_2$ be disjoint subsets of vertices of size $\ep n$. In $D^*(n,C/n)$, the probability that there is no edge between $B_1$ and $B_2$ is $(1-C/n)^{(\ep n)^2}$. Taking a union bound over all possible sets $B_1$ and $B_2$ of size exactly $\ep n$, we get that the probability that there is some disjoint $B_1$ and $B_2$ with no edge between $B_1$ and $B_2$ is at most
\[ \binom{n}{\ep n}^2 \left(1-\frac{C}{n}\right)^{(\ep n)^2} \leq \exp\left( 2 \ep n \log\frac{e}{\ep} - C \ep^2 n \right) \leq \exp\left( - \frac{C \ep^2 n}{2} \right) .\qedhere\]
\end{proof}

\section{The semi-degree absorbing lemma}\label{sec::absorber}

Following the framework sketched in Section~\ref{subsec::ideas}, in this section we define and construct our global and local absorbers, Definitions~\ref{def::global_abs} and~\ref{def::local_abs}, respectively. 
Moreover, we prove the existence of many local absorbers which will then be used to construct a global absorber. 
For the latter, we use Montgomery's technique~\cite{monty, mont_embedding} based on the existence of a sparse auxiliary bipartite graph $H_m$ with `robust' matching properties; see Lemma~\ref{lem::Mont-graph}. 

In this section, we do not work in the random model; instead, our results are stated for $\eps n$-bipseudorandom digraphs with minimum semi-degree at least~$\alpha n$. In Section~\ref{sec::proof}, we apply the main absorbing lemma, Lemma~\ref{lem::global_abs}, to the randomly perturbed model to prove Theorem~\ref{thm::semideg_simple}.

\begin{definition}[Global absorber]\label{def::global_abs} \rm
Let~$P$ be an oriented path and let $D$ be a digraph.
A subset~$A \subseteq V(D)$ is called a \emph{$P$-global absorber} if for every $R \subseteq V(D) \setminus A$ such that~$\vert R\vert + \vert A\vert = \vert V(P)\vert$, and for every pair of distinct vertices~$v,v'\in R$, there is a copy of $P$ in $D[A\cup R]$ with startpoint~$v$ and endpoint $v'$.
\end{definition}

\begin{definition}[Local absorber]\label{def::local_abs} \rm
Let $P$ be an oriented path, $D$ be a digraph, $S\subseteq V(D)$, and $z\in V(D)\setminus S$.
A pair $(A,v)$ is a \emph{$P$-absorber for $(S,z)$} if
\begin{itemize}
    \item $A \subseteq V(D) \setminus (S \cup \{z\})$ is a set of $|V(P)|-2$ vertices,
    \item $v \in A$,
    \item for every $s \in S$, $D[A \cup \{s,z\}]$ contains a copy of $P$ with startpoint $v$ and endpoint $z$.
\end{itemize}
We call $v$ the \emph{startpoint} of the $P$-absorber $(A,v)$.
\end{definition}

The next lemma guarantees the existence of local absorbers with prescribed startpoint avoiding any small set of vertices --- this ensures that all the local absorbers we find will be vertex-disjoint. Before this, we prove a simple consequence of the pseudorandom property that will be useful later. Recall that for an oriented path $P = (u_1, \dots, u_k)$, $\sigma(u_i u_{i+1}) = +$ if $\forw{u_i u_{i+1}} \in E(P)$ and $\sigma(u_i u_{i+1}) =-$ otherwise.
\begin{prop}\label{prop::connecting}
Let $n, t \in \mathbb N$ where $1\leq t <n/2$.
Suppose that $D$ is an $n$-vertex~$t$-bipseudorandom digraph with~$\delta^0(D)\geq 2t + 1$.
For every oriented path $P$ on $3$ edges and every distinct $v, v' \in V(D)$, there is a copy of $P$ in $D$ with startpoint $v$ and endpoint $v'$.
\end{prop}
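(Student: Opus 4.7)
The plan is to parametrize the copy of $P = (u_1, u_2, u_3, u_4)$ by choosing interior vertices $x_1, x_2$, so that $(v, x_1, x_2, v')$ plays the roles of $(u_1, u_2, u_3, u_4)$. Writing $\sigma_i := \sigma(u_i u_{i+1})$ for $i \in \{1,2,3\}$, I would first identify large candidate sets for $x_1$ and $x_2$ determined by the orientations of the first and last edges of $P$: let $A := N^+(v)$ if $\sigma_1 = +$ and $A := N^-(v)$ otherwise, so that any $x_1 \in A$ automatically gives the correct edge between $v$ and $x_1$; analogously let $B := N^-(v')$ if $\sigma_3 = +$ and $B := N^+(v')$ otherwise, so that any $x_2 \in B$ gives the correct edge between $x_2$ and $v'$. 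The minimum semi-degree hypothesis gives $|A|, |B| \geq 2t+1$, and after removing the forbidden endpoint vertices we still have $|A \setminus \{v'\}|, |B \setminus \{v\}| \geq 2t$.

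It then suffices to find $x_1 \in A \setminus \{v'\}$ and $x_2 \in B \setminus \{v\}$ that are distinct and such that the edge between them can be oriented according to $\sigma_2$. If these two sets were disjoint, $t$-bipseudorandomness applied to size-$t$ subsets would immediately produce a bidirected edge between them, simultaneously handling both possible values of $\sigma_2$. The only mild obstacle is that $A$ and $B$ may overlap substantially, so I cannot directly take disjoint halves of them. I would sidestep this asymmetrically: first pick any $A'' \subseteq A \setminus \{v'\}$ with $|A''| = t$, and then pick any $B'' \subseteq (B \setminus \{v\}) \setminus A''$ with $|B''| = t$. Such a $B''$ exists because $|(B \setminus \{v\}) \setminus A''| \geq 2t - t = t$.

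Now $A''$ and $B''$ are disjoint size-$t$ subsets of $V(D)$, so $t$-bipseudorandomness yields $x_1 \in A''$ and $x_2 \in B''$ with both $\forw{x_1 x_2}$ and $\forw{x_2 x_1}$ in $D$. By construction $x_1 \neq v, v'$ and $x_2 \neq v, v'$ and $x_1 \neq x_2$, while the edges $vx_1$ and $x_2v'$ exist with the required orientations and the edge between $x_1$ and $x_2$ can be oriented either way, so $(v, x_1, x_2, v')$ is a copy of $P$ in $D$ with startpoint $v$ and endpoint $v'$. Apart from the small bookkeeping to deal with the possible overlap of $A$ and $B$, this is a direct one-step application of the bipseudorandom property together with the semi-degree lower bound.
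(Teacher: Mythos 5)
Your proof is correct and follows essentially the same route as the paper: identify the out-/in-neighborhoods of $v$ and $v'$ prescribed by $\sigma(u_1u_2)$ and $\sigma(u_3u_4)$, extract disjoint size-$t$ subsets avoiding $v,v'$, and apply $t$-bipseudorandomness to find a bidirected edge providing the middle edge in whichever orientation is needed. The only difference is that you spell out the disjointness bookkeeping a little more explicitly than the paper does, which is fine.
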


\begin{proof}
Let $P = (u_1, u_2, u_3, u_4)$. Let $B_1 := N^*(v)$ for $* = \sigma(u_1 u_2)$, and let $B_2 := N^*(v')$ for $* = \sigma(u_4 u_3)$. Since $|B_1|, |B_2| \geq 2t + 1$, there exists disjoint subsets $B_1' \subseteq B_1$ and $B_2' \subseteq B_2$ with $|B_1'|, |B_2'| \ge t$ which do not contain $v$ or $v'$. Since $D$ is $t$-bipseudorandom, there exists a bidirected edge $\overleftrightarrow{v_1 v_2}$ in $D$ with $v_1 \in B_1'$ and $v_2 \in B_2'$. Then $(v, v_1, v_2, v')$ contains a copy of $P$ with startpoint $v$ and endpoint $v'$.
\end{proof}

\begin{lemma}\label{lem::lin_many}
Let~$n,k\in \mathbb N$ and $\alpha, \eps > 0$,
so that $\alpha n \geq 4k+4$ and $\alpha \geq 8 (2k+2)\eps$.
Let $D$ be an $n$-vertex $\eps n$-bipseudorandom digraph with~$\delta^0(D) \geq \alpha n$.
Let~$U \subseteq V(D)$ so that $|U| \leq \alpha n/2$, and let $v \in V(D) \setminus U$.
For every oriented path~$P$ on $2k+5$ vertices, every vertex set~$S\subseteq V(D)\setminus \{v\}$ of size $k$, and every vertex~$z\in V(D) \setminus (S\cup \{v\})$, there exists a~$P$-absorber $(A,v)$ for~$(S, z)$ disjoint from $U$.
\end{lemma}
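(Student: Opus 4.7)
Write $P=(u_1,\ldots,u_{2k+5})$ and $\sigma_i:=\sigma(u_iu_{i+1})$. The plan is to construct $A=\{v,a_1,\ldots,a_{2k+2}\}$ where $(a_1,\ldots,a_{2k+2})$ is a bidirected path in $D$, and, fixing any enumeration $S=\{s_1,\ldots,s_k\}$, to realize the absorption of $s_i$ via the embedding
\[ v,\; a_1,\; \ldots,\; a_{2i},\; s_i,\; a_{2i+1},\; \ldots,\; a_{2k+2},\; z, \]
so that $s_i$ plays $u_{2i+2}$, $a_j$ plays $u_{j+1}$ for $j\le 2i$, and $a_j$ plays $u_{j+2}$ for $j\ge 2i+1$. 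In particular the same set $A$ serves to absorb any $s_i\in S$ by choosing the insertion slot corresponding to~$i$.

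Since the backbone edges are bidirected, every edge $a_ja_{j+1}$ can be used in either direction and therefore accommodates whichever orientation of $P$ is demanded by the embedding, regardless of which $s_i$ is being absorbed. The only non-bidirected edges that must be arranged are $v\to a_1$ (with orientation $\sigma_1$), $a_{2k+2}\to z$ (with orientation $\sigma_{2k+4}$), and, for each $i\in[k]$, the edge between $a_{2i}$ and $s_i$ (with orientation $\sigma_{2i+1}$) together with the edge between $s_i$ and $a_{2i+1}$ (with orientation $\sigma_{2i+2}$). Each of these $2k+2$ requirements pins exactly one backbone vertex $a_j$ to a single in- or out-neighborhood $C_j$ of a specified vertex in $\{v,z\}\cup S$, and the hypothesis $\delta^0(D)\ge\alpha n$ yields $|C_j|\ge\alpha n$ for each such~$j$.

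To actually produce the backbone, I will greedily extract pairwise disjoint subsets $C_j'\subseteq C_j\setminus(U\cup S\cup\{v,z\})$, each of size $2\eps n$, and then apply the Connecting Lemma (Lemma~\ref{lem::connecting}) to $C_1',\ldots,C_{2k+2}'$ to obtain the desired bidirected path with $a_j\in C_j'$. The main obstacle lies in the bookkeeping of this greedy step: at step $j$ the pool available for $C_j'$ has size at least $\alpha n-|U|-k-2-(j-1)\cdot 2\eps n$, and the hypotheses $|U|\le\alpha n/2$, $\alpha n\ge 4k+4$, and $\alpha\ge 8(2k+2)\eps$ are tuned so that this quantity stays at least $2\eps n$ throughout all $2k+2$ rounds. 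Once the backbone $(a_1,\ldots,a_{2k+2})$ is in hand, each of the claimed $k$ embeddings of $P$ inside $D[A\cup\{s_i,z\}]$ follows directly from combining the bidirectedness of the backbone with the specified neighborhood conditions on the $a_j$.
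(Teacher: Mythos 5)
Your proposal is correct and is essentially the paper's own proof: the backbone $(a_1,\ldots,a_{2k+2})$ is exactly the bidirected path the paper obtains from Lemma~\ref{lem::connecting}, each $a_j$ is pinned to a single in- or out-neighborhood of $v$, $z$, or some $s_i$ (the paper's sets $B_1,\ldots,B_{2k+2}$), and the absorption of $s_i$ proceeds via the same insertion $(v,a_1,\ldots,a_{2i},s_i,a_{2i+1},\ldots,a_{2k+2},z)$. The only cosmetic difference is that you extract the disjoint subsets greedily with a fixed size $2\eps n$ at each step, whereas the paper splits each free portion evenly into $2k+2$ parts of size $\geq \frac{\alpha n}{4(2k+2)}$; both are within the ``inconsequential rounding'' that the paper allows itself (indeed the paper's own count drops $v$ from the exclusion set, a slack of one vertex comparable to the $\pm 1$ in your tally).
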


\begin{proof} 
Fix~$P=(u_1, \dots, u_{2k+5})$ and  an ordering $s_1, \dots, s_{k}$ of $S$. We will find a $P$-absorber for $(S,z)$ by applying  Lemma~\ref{lem::connecting} to various neighborhoods of the $s_i$, $v$ and $z$. For each $i \in [k]$, define
\begin{itemize}
    \item $B_1 := N^*(v)$, where $* = \sigma(u_1 u_2)$,
    \item $B_{2i} := N^*(s_i)$, where $* = \sigma(u_{2i+2} u_{2i+1})$,
    \item $B_{2i+1} := N^*(s_i)$, where $* = \sigma(u_{2i+2} u_{2i+3})$,
    \item $B_{2k+2} := N^*(z)$, where $* = \sigma(u_{2k+5} u_{2k+4})$.
\end{itemize}
Because of the minimum degree condition, $\vert B_i\vert \geq \alpha n$, and therefore, we may take pairwise disjoint subsets~$B_i'\subseteq B_i\setminus (U\cup S \cup \{z\})$ such that
$$\vert B_i'\vert \geq 
\frac{(\alpha n -\tfrac{\alpha}{2}n-k-1)}{2k+2} \geq
\frac{\alpha n}{4(2k+2)} \geq 2\eps n.$$
Hence, an application of Lemma~\ref{lem::connecting} yields a bidirected path~$(v_1,\dots, v_{2k+2})$ such that~$v_i\in B'_i$ for every~$i\in [2k+2]$.

Let $A := \{v, v_1, \dots, v_{2k+2}\}$. To see that $(A,v)$ is a $P$-absorber for $(S,z)$, observe that for every $s_i\in S$, the pairs~$(u_{2i+1},u_{2i+2})$ and $(u_{2i+2},u_{2i+3})$ in $P$ have the same directions of the underlying edges as the edges from the pairs $(v_{2i},s_i)$ and $(s_i,v_{2i+1})$, allowing $s_i$ to play the role of $u_{2i+2}$ in $P$; see Figure~\ref{fig::local_abs}. More precisely,
\[ (v, v_1, \dots, v_{2i}, s_i, v_{2i+1}, \dots, v_{2k+2}, z) \]
is a copy of $P$ in $D[A \cup \{s_i, z\}]$ with startpoint $v$ and endpoint $z$.
\end{proof}

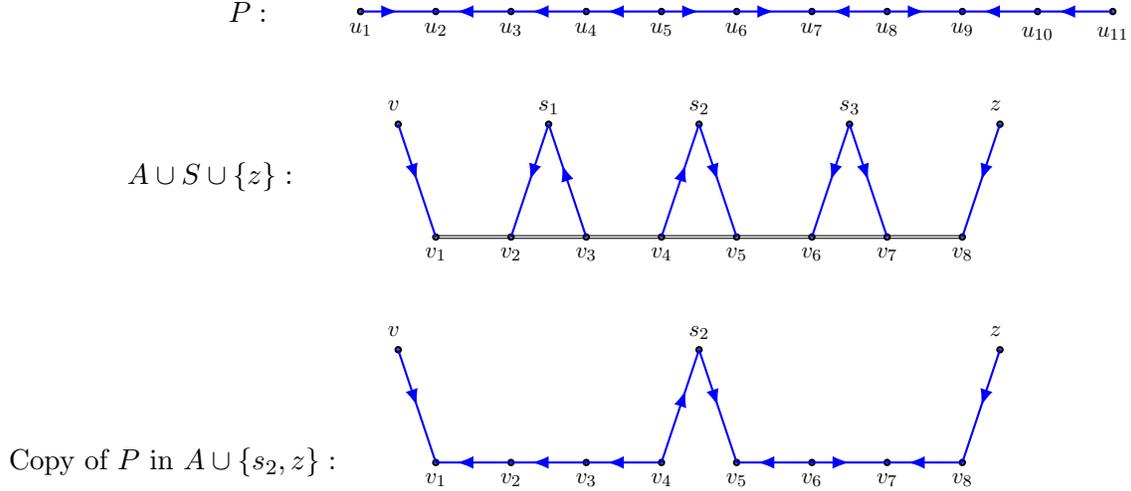
\begin{figure}[h!]
 \centering
 \begin{tikzpicture}[thick,scale=1]
 
 \node[draw=none, fill=white] at (-2.5,3) {$P:$};
 \node[draw=none, fill=white] at (-3,0.8) {$A \cup S \cup \{z\}:$};
 \node[draw=none, fill=white] at (-3.5,-3) {Copy of $P$ in $A \cup \{s_2, z\} :$};

 \foreach \y in {1,...,11} {
	 \draw (\y-2,3) node [label=below:{$\scaleto{u_{\y}}{5pt}$}] {};}
 
 \draw[middlearrow={latex}, blue] (-1,3) -- (0,3);
 \draw[middlearrow={latex reversed}, blue] (0,3) -- (1,3);
 \draw[middlearrow={latex reversed}, blue] (1,3) -- (2,3);
 \draw[middlearrow={latex reversed}, blue] (2,3) -- (3,3);
 \draw[middlearrow={latex}, blue] (3,3) -- (4,3);
 \draw[middlearrow={latex}, blue] (4,3) -- (5,3);
 \draw[middlearrow={latex reversed}, blue] (5,3) -- (6,3);
 \draw[middlearrow={latex}, blue] (6,3) -- (7,3);
 \draw[middlearrow={latex reversed}, blue] (7,3) -- (8,3);
 \draw[middlearrow={latex reversed}, blue] (8,3) -- (9,3);

 \foreach \y in {1,...,7} {
	 \draw[thin,double] (\y-1,0) -- (\y,0);
	 \draw (\y-1,0) node [label=below:{$\scaleto{v_{\y}}{5pt}$}] {};}
	\draw (7,0) node [label=below:{$\scaleto{v_{8}}{5pt}$}] {};
	
 \draw (-0.5,1.5) node [label=above:{$\scaleto{v_{\phantom{1}}}{5pt}$}] {};
 \draw (1.5,1.5) node [label=above:{$\scaleto{s_{1}}{5pt}$}] {};
 \draw (3.5,1.5) node [label=above:{$\scaleto{s_{2}}{5pt}$}] {};
 \draw (5.5,1.5) node [label=above:{$\scaleto{s_{3}}{5pt}$}] {};
 \draw (7.5,1.5) node [label=above:{$\scaleto{z_{\phantom{1}}}{5pt}$}] {};

 \draw[middlearrow={latex}, blue] (-0.5,1.5) -- (0,0);
 
 \draw[middlearrow={latex}, blue] (1.5,1.5) -- (1,0);
 \draw[middlearrow={latex reversed}, blue] (1.5,1.5) -- (2,0);
 
 \draw[middlearrow={latex reversed}, blue] (3.5,1.5) -- (3,0);
 \draw[middlearrow={latex}, blue] (3.5,1.5) -- (4,0);
 
 \draw[middlearrow={latex}, blue] (5.5,1.5) -- (5,0);
 \draw[middlearrow={latex}, blue] (5.5,1.5) -- (6,0);
 
 \draw[middlearrow={latex}, blue] (7.5,1.5) -- (7,0);

\begin{scope}[xshift=0 cm,yshift=-3 cm] 
 
 \foreach \y in {1,...,7} {
	 \draw (\y-1,0) node [label=below:{$\scaleto{v_{\y}}{5pt}$}] {};}
	\draw (7,0) node [label=below:{$\scaleto{v_{8}}{5pt}$}] {};
	
 \draw (3.5,1.5) node [label=above:{$\scaleto{s_{2}}{5pt}$}] {};

 \draw (7.5,1.5) node [label=above:{$\scaleto{z_{\phantom{1}}}{5pt}$}] {};
 
 \draw (-0.5,1.5) node [label=above:{$\scaleto{v_{\phantom{1}}}{5pt}$}] {};
 \draw[middlearrow={latex}, blue] (-0.5,1.5) -- (0,0);

  \draw[middlearrow={latex reversed}, blue] (3.5,1.5) -- (3,0);
 \draw[middlearrow={latex}, blue] (3.5,1.5) -- (4,0);
 
 \draw[middlearrow={latex}, blue] (7.5,1.5) -- (7,0);
 
 \draw[middlearrow={latex reversed}, blue] (0,0) -- (1,0);
 \draw[middlearrow={latex reversed}, blue] (1,0) -- (2,0);
 \draw[middlearrow={latex reversed}, blue] (2,0) -- (3,0);
 \draw[middlearrow={latex reversed}, blue] (4,0) -- (5,0);
 \draw[middlearrow={latex}, blue] (5,0) -- (6,0);
 \draw[middlearrow={latex reversed}, blue] (6,0) -- (7,0);
\end{scope}
 \end{tikzpicture}
 \vspace{-1cm}
 \caption{
 An example of a $P$-absorber $(A,v)$ for $(S,z)$ from Lemma~\ref{lem::lin_many} with $k=|S|=3$.
 The double edges indicate that both orientations are present.
 For every $i\in\{1,2,3\}$, $A \cup \{s_i, z\}$ contains a copy of~$P$ in which vertex~$v$ plays the role of~$u_1$, vertex~$z$ plays the role of~$u_{11}$, vertex~$s_i$ plays the role of~$u_{2i+2}$, and $v_j$ plays the role of~$u_{j+1}$ for $j \leq 2i$ and $u_{j+2}$ for $j> 2i$.}
 \label{fig::local_abs}
\end{figure}

Our global absorber works in the following two-step approach: given a set $R$ of vertices we wish to absorb, we first absorb $R$ using some vertices from a specific vertex set $X$ within our global absorber; then the rest of the global absorber essentially absorbs what is left of $X$ in order to create a copy of the desired oriented path $P$. The following lemma will be used to undertake the first step of this approach; it follows easily from Lemma~\ref{lem::longpath} and Proposition~\ref{prop::connecting}.

\begin{lemma}\label{lem::connecting2}
Let~$5/n < \eps < \beta < 1$, and let $m$ and $\beta m$ be integers such that $\beta m \geq 5\ep n$.
Let $D$ be an~$n$-vertex~$\eps n$-bipseudorandom digraph, and suppose there is a set~$X\subseteq V(D)$ of size~$(1+\beta)m$ such that for every~$v\in V(D)$ and for every $* \in \{+,-\}$, $\vert N^*(v)\cap X\vert\geq 2\beta m$. Let $R\subseteq V(D)\setminus X$ be such that $|R|\geq 2$, and let $v,v' \in R$ be distinct.
Then, for every oriented path $P$ on~$|R| +\beta m$ vertices, there exists a copy of $P$ in $D[R\cup X]$ with startpoint $v$ and endpoint $v'$ that covers $R$.
\end{lemma}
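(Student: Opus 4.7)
The plan is to place the vertices of $R$ at chosen positions along $P$ and fill the remaining $\beta m$ positions with vertices of $X$ so that the prescribed orientations are matched. Write $P = (u_1, \dots, u_{|R|+\beta m})$ and let $r := |R|$. I will fix an ordering $r_1 = v, r_2, \dots, r_r = v'$ of $R$ and pick positions $1 = p_1 < p_2 < \cdots < p_r = r + \beta m$ at which the $r_i$'s will sit; the \emph{bridges} in between then contain $k_i := p_{i+1} - p_i - 1$ internal positions each, and $\sum_i k_i = \beta m$. I will balance the $p_i$'s so that each $k_i \geq 1$ and no $k_i$ is too large.

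For each bridge $i$ I would apply Lemma~\ref{lem::connecting} to find a bidirected path $v_1^{(i)}, \dots, v_{k_i}^{(i)}$ in $D[X]$ with $v_1^{(i)} \in N^{\sigma}(r_i) \cap X$ and $v_{k_i}^{(i)} \in N^{\sigma'}(r_{i+1}) \cap X$, where $\sigma, \sigma' \in \{+,-\}$ are the signs prescribed by the orientation of $P$ at the two ends of the bridge. Since a bidirected path realizes every orientation pattern along its edges, I may interpret it as the precise oriented sub-path of $P$ from $r_i$ to $r_{i+1}$; concatenating these bridges along the common $R$-vertices yields a copy of $P$ from $v$ to $v'$ covering $R$.

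To apply Lemma~\ref{lem::connecting} on bridge $i$, I take the two endpoint sets to be subsets of $N^{\sigma}(r_i) \cap X$ and $N^{\sigma'}(r_{i+1}) \cap X$ of size $2\eps n$, avoiding the $X$-vertices used in previous bridges, and the interior sets $B_2, \dots, B_{k_i-1}$ to be arbitrary pairwise disjoint subsets of $X$ of size $2\eps n$, also avoiding committed vertices. The neighborhood lower bound $|N^*(\cdot) \cap X| \geq 2\beta m$ together with $\beta m \geq 5\eps n$ makes the endpoint sets easy to extract even after up to $\beta m$ $X$-vertices from earlier bridges have been removed.

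The main delicate point is the bookkeeping for the interior sets: they consume about $2 k_i \eps n$ additional $X$-vertices in bridge $i$, so the direct argument is cleanest when each $k_i$ is at most roughly $m/(2\eps n)$. If some bridge would be too long for this (which is only a concern when $r$ is small compared to $\beta m$), I would instead extract most of that bridge's interior from a bidirected path on $\geq (1+\beta)m - 2\eps n$ vertices in $D[X]$ obtained via Lemma~\ref{lem::longpath}, and splice short bidirected connectors produced by Lemma~\ref{lem::connecting} between the $R$-endpoints of the bridge and a sub-path of the long bidirected path chosen to have endpoints in $N^{\sigma}(r_i) \cap X$ and $N^{\sigma'}(r_{i+1}) \cap X$; the abundance of vertices in these neighborhoods again ensures that a compatible sub-path exists.
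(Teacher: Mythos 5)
Your approach breaks down precisely in the regime in which the paper actually applies this lemma, namely when $|R|$ is large compared to $\beta m$. You place the $R$-vertices at positions $p_1 < \cdots < p_r$ along $P$ and fill the remaining $\beta m$ positions with $X$-vertices, stipulating that each bridge has $k_i \geq 1$ internal positions. But there are $r-1$ bridges and $\sum_i k_i = \beta m$, so whenever $|R| - 1 > \beta m$ some bridge is forced to have $k_i = 0$. Then two vertices of $R$ would have to be consecutive on the copy of $P$, which requires a specifically oriented edge between them in $D$ — and $R$ is an arbitrary set, so there need not be any edges inside $R$ at all. (Even $k_i = 1$ is problematic: it requires a single $X$-vertex that is simultaneously a $\sigma$-neighbor of $r_i$ and a $\sigma'$-neighbor of $r_{i+1}$; the hypotheses $|N^*(\cdot)\cap X| \geq 2\beta m$ and $|X| = (1+\beta)m$ only force a nonempty intersection when $\beta > 1/3$, which is not assumed.) In the key application inside Lemma~\ref{lem::global_abs}, $|\bar R|$ is of order $\alpha n$ while $\bar\beta m$ is of order $\eta\alpha^2 n$, so this case is not a corner case — it is the generic one. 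You correctly identify the long-bridge issue for small $|R|$, but miss the far more serious empty-bridge problem for large $|R|$.

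The paper avoids the difficulty by not pinning $R$-vertices to predetermined positions at all. It sets $R_0 := (R \cup X')\setminus\{v,v'\}$ for a small slice $X'\subseteq X$ and applies Lemma~\ref{lem::longpath} to $D[R_0]$ — not to $D[X]$ — to obtain a long bidirected path $Q_\bi$ covering all but $2\ep n$ vertices of $R_0$. Because $Q_\bi$ is bidirected, it can be read as a copy of any length-matching oriented subpath of $P$, regardless of which $R$-vertices sit on it or where. Only the $\leq 2\ep n$ leftover vertices of $R_0$, together with $v$, $v'$, and the endpoints of $Q_\bi$, need to be stitched in via 3-edge connectors through $X\setminus X'$ using Proposition~\ref{prop::connecting}, which is easily affordable. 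The essential observation you miss is that the $\ep n$-bipseudorandom property holds on $D[R_0]$ itself, so $R$-vertices can be swallowed whole into a bidirected path rather than being inserted one at a time between $X$-bridges.
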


\begin{proof}
Let~$X$, $R$, $v$, and $v'$ be as in the statement of the lemma. Fix an arbitrary orientation of a path $P = (u_1, \dots, u_k)$ on $k := |R| +\beta m$ vertices.  
Let $t := 2\ep n$, and $X'\subseteq X$ be an arbitrary set of size~$\beta m - 2t - 4 \geq \ep n - 4 > 0$. Let~$R_0:=R\cup X'\setminus \{v,v'\}$ and~$X_0:=X\setminus X'$. 

Observe that~$D[R_0]$ is $\eps n$-bipseudorandom, so an application of Lemma~\ref{lem::longpath} yields a bidirected path $Q_{\bi}$ on exactly
\begin{equation}\label{eq::sizeQlr}
|R_0| - 2 \ep n = |R| -2 + |X'| - t = |R| +\beta m - 3t -6
\end{equation}
vertices. Denote by $w$ and $w'$ the startpoint and endpoint of $Q_\bi$, respectively.

Label $(R_0 \setminus V(Q_\bi)) \cup \{v,w\}$ as $\{v_0 = v, v_1, \dots, v_t, v_{t+1}=w\}$. For each $0 \leq i \leq t$, we find a copy $(v_i, x_i, x_i', v_{i+1})$ of the subpath $(u_{3i+1}, u_{3i+2}, u_{3i+3}, u_{3i+4})$ of $P$, and we also find a copy $(w', x, x', v')$ of the subpath $(u_{k-3}, u_{k-2}, u_{k-1}, u_k)$ of $P$, with $x_i, x_i', x, x' \in X_0$ all distinct. This is possible by applying Proposition~\ref{prop::connecting}, observing that in total we will use  $2(t+2)$ vertices of $X_0$, and for any $U \subseteq X_0$ with $|U| \leq 2(t+2)$ and any $z, z' \in V(D) \setminus X_0$, we have
\[ \delta^0(D[(X_0 \setminus U) \cup \{z,z'\}]) \geq 2\beta m - |X'| - |U| \geq \beta m \geq 2\ep n + 1 .\]

Recall that $\circ$ denotes concatenation. Thus $(v_0, x_0, x_0', v_1, x_1, x_1', v_2, \dots, v_{t+1}) \circ Q_\bi \circ (w', x, x', v')$ contains a copy of $P$ with startpoint $v$ and endpoint $v'$, since $3t+3 + |V(Q_\bi)| + 3 = |R| + \beta m = k$ by~\eqref{eq::sizeQlr}.
\end{proof}

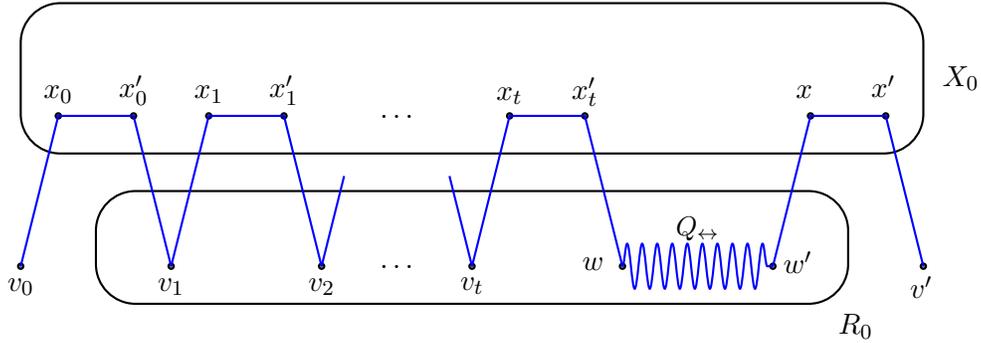
\begin{figure}[h!]
\centering
\begin{tikzpicture}[thick,scale=1]
 
\begin{scope}[xshift = -1cm, yshift = 0cm]
\node[draw=none, fill=white] at (12,2.5) {$X_0$};
\foreach \y in {0,1} {
    \draw (2*\y,2) node [label=above:{$x_{\y}$}] {};
    \draw (2*\y+1,2) node [label=above:{$x'_{\y}$}] {};}

\node[draw=none, fill=white] at (4.5,2) {$\ldots$};

\draw (6,2) node [label=above:{$x_{t}$}] {};
\draw (7,2) node [label=above:{$x'_{t}$}] {};

\draw (10,2) node [label=above:{$x_{\phantom{1}}$}] {};
\draw (11,2) node [label=above:{$x'_{\phantom{1}}$}] {};

\draw[rounded corners=15] (-.5,1.5) rectangle (11.5, 3.5);
\end{scope} 

\begin{scope}[xshift=-1.5cm, yshift=0cm]
\node[draw=none, fill=white] at (11.1,-0.8) {$R_0$};
\node[draw=none, fill=white] at (9, 0.5) {$\scaleto{Q_{\bi}}{9pt}$};
\draw[rounded corners=15] (1,-.5) rectangle (11,1);

\foreach \y in {1,2} {
	 \draw (2*\y,0) node [label=below:{$v_{\y}$}] {};}
\draw (0,0) node [label=below:{$v_{0}$}] {};
\node[draw=none, fill=white] at (5,0) {$\ldots$};
\draw (6,0) node [label=below:{$v_{t}$}] {};
\draw (8,0) node [label=left:{$w_{\phantom{1}}$}] {};
\draw (10,0) node [label=right:{$w'_{\phantom{1}}$}] {};
\draw (12,0) node [label=below:{$v'_{\phantom{1}}$}] {};

\draw[decorate, decoration={snake, segment length=0.2cm, amplitude=0.3cm}, blue] (8,0) -- (10,0);
\end{scope}

\foreach \x in {-2,0,4,8}{
\begin{scope}[xshift = \x cm, yshift=0cm]
    \draw[blue] (0.5,0) -- (1,2) -- (2,2) -- (2.5,0);
\end{scope}}
    \draw[blue] (2.5,0) -- (2.8,1.2);
    \draw[blue] (4.5,0) -- (4.2,1.2);

\end{tikzpicture}
\caption{
A copy of $P$ in $D[R\cup X]$ with startpoint $v$ and endpoint $v'$ that covers all the vertices in $R$, as found in Lemma~\ref{lem::connecting2}.}
\label{fig::first_absorb}
\end{figure}

The next lemma provides the sparse auxiliary bipartite graph $H_m$ with robust matching properties that we use as a framework to build our global absorber.

\begin{lemma}[\cite{mont_embedding}]\label{lem::Mont-graph}
For every~$0<\beta\leq 1$ and for sufficiently large $m \in \N$ with $\beta m \in \N$, there exists a bipartite graph $H_m$ with parts $X \dot\cup Y$ and $Z$, such that $|X|=(1+\beta)m$, $|Y| = 2m$, $|Z| = 3m$, $H_m$ has maximum degree at most $40$, and for every $X' \subseteq X$ of size $m$, there exists a perfect matching between $X' \cup Y$ and $Z$ in $H_m$.
\end{lemma}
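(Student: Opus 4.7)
\textbf{Proof plan for Lemma~\ref{lem::Mont-graph}.} The plan is to construct $H_m$ probabilistically. Fix a constant $d$ to be chosen later (we will see $d \leq 20$ suffices). Sample a random bipartite multigraph between $X \cup Y$ and $Z$ by letting each $v \in X \cup Y$ independently choose $d$ neighbors in $Z$ uniformly at random, then delete parallel edges. Since $|X \cup Y| = (3+\beta)m \leq 4m$ and $|Z| = 3m$, the expected degree of each $z \in Z$ is at most $4d/3$, and a Chernoff-type concentration bound followed by a trivial clean-up (deleting the few excess edges at any $z$ whose degree exceeds $2d$) yields $\Delta(H_m) \leq 2d \leq 40$ deterministically, while affecting only a negligible fraction of edges.

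The heart of the argument is the robust matching property. Since $|X' \cup Y| = 3m = |Z|$, by Hall's theorem it is enough to show that a.a.s.\ for every $X' \subseteq X$ with $|X'|=m$ and every $S \subseteq X' \cup Y$ we have $|N_{H_m}(S) \cap Z| \geq |S|$. We union bound over the $\binom{(1+\beta)m}{m} \leq 2^{(1+\beta)m}$ choices of $X'$ and the at most $2^{3m}$ choices of $S$, so the failure probability for a fixed pair $(X',S)$ must decay as $\exp(-Cm)$ with $C$ large. Splitting by $s := |S|$: for small and moderate $s$ (say $s \leq 3m/2$), I would bound the probability that $|N(S)| < s$ by choosing a target $T \subseteq Z$ of size $s-1$ and demanding that all $ds$ random edges from $S$ fall inside $T$, giving a contribution of at most $\binom{3m}{s-1} \bigl((s-1)/(3m)\bigr)^{ds}$. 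For $s > 3m/2$, I would pass to the complement $T := Z \setminus N(S)$, which receives no edges from $S$, and apply an analogous estimate to $T$. Both regimes yield bounds of the form $\exp(-C' s)$ with $C'$ growing linearly in $d$, so picking $d$ a sufficiently large absolute constant defeats the $\exp(\Theta(m))$ factor from the union bound.

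\textbf{Main obstacle.} The delicate point is balancing the choice of $d$ against the two competing constraints: the maximum-degree bound forces $d$ to remain a small constant, while the union bound over all $\binom{(1+\beta)m}{m}$ choices of $X'$ demands a very strong expansion estimate. The tightest regime occurs when $s$ (or $|T|$) is a constant fraction of $3m$, where the binomial entropies and the expansion exponent must be matched carefully to ensure the sum over all scales of $s$ still beats $\exp(Cm)$. This calculation is carried out in detail in Montgomery's paper~\cite{mont_embedding}, and since the lemma is already established there, we may invoke it as stated.
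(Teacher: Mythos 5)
The paper does not prove this lemma at all: it is imported verbatim from Montgomery~\cite{mont_embedding} and used as a black box, so your final sentence (invoking the source) is exactly what the paper does, and to that extent you are aligned with it. The problem is that the construction you sketch before that sentence does not work, and the flaw is structural rather than computational. If only the vertices of $X \cup Y$ choose $d$ random neighbours in $Z$, then a fixed $z \in Z$ receives no edge at all with probability about $e^{-d(3+\beta)/3}$, a constant; more importantly, $z$ has no neighbour in $Y$ with probability about $e^{-2d/3}$, so a.a.s.\ $\Theta(m)$ vertices of $Z$ have all of their (at most $d$) neighbours inside $X$. For any such $z$ one can choose $X' \subseteq X$ of size $m$ avoiding $N(z)$ (possible since $|X| - d \geq m$), and then $z$ is isolated in the bipartite graph between $X' \cup Y$ and $Z$, so no perfect matching exists. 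This is visible in your own large-$s$ regime: for $|S| = 3m$ and $|T| = 1$ the failure probability is about $3m\,e^{-d}$, which cannot beat the union bound over the $\binom{(1+\beta)m}{m} = e^{\Theta(m)}$ choices of $X'$ for any constant $d$ --- and here the union bound is not merely lossy, the bad event genuinely occurs. The construction must guarantee that every $z \in Z$ has several neighbours in $Y$ (the part that is always entirely present), e.g.\ by having each $z$ also select a constant number of random neighbours in $Y$, or by building $H_m$ from a bounded number of perfect matchings.

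A second, lesser gap is the degree clean-up. The degree of a vertex of $Z$ in your model is approximately Poisson with mean $4d/3$, so a constant proportion of $Z$ exceeds $2d$ and a.a.s.\ the maximum degree is $\Theta(\log m/\log\log m)$, not $O(1)$. Deleting the excess edges \emph{after} establishing the expansion estimates is not harmless, because Hall's condition has no slack at the top scale and the deleted edges are not independent of the sets $S$ you union-bounded over; either the bounded degrees must be enforced by the model itself (e.g.\ a union of matchings or a configuration-type model) or the expansion must be proved for the pruned graph. Both of these points are precisely the technical content of the proof in~\cite{mont_embedding}, so the sketch as written cannot be substituted for the citation.
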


We are now ready to prove the absorbing lemma for Theorem~\ref{thm::semideg_simple}.

\begin{lemma}[Absorbing lemma]\label{lem::global_abs}
For every $0 < \alpha, \eta \leq 1$, there exists an~$0<\eps <\alpha \eta /1000$ such that the following holds for sufficiently large $n$.
Given an oriented path~$P$ of size~$\lceil\alpha n/4\rceil$, every~$\eps n$-bipseudorandom $n$-vertex digraph $D$ with $\delta^0(D) \geq \alpha n$ contains a $P$-global absorber $A$ of size at most $\eta|V(P)|$.
\end{lemma}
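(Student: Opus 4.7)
The plan is to carry out Montgomery's absorbing framework as outlined in Section~\ref{subsec::ideas}. Choose small constants $\varepsilon, \beta > 0$ depending on $\alpha$ and $\eta$ with $\varepsilon \ll \beta \ll \min(\eta, \alpha)$ (concretely, $\beta \approx \eta/10^{4}$ and $\varepsilon$ sufficiently small for Lemmas~\ref{lem::connecting} and~\ref{lem::connecting2}), let $k = 40$, and choose $m = cn$ for a suitable small $c = c(\alpha, \eta)$ so that the final absorber $A$ has at most $\eta|V(P)|$ vertices.

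The construction of $A$ proceeds in two stages. First, pick disjoint vertex sets $X, Y, Z \subseteq V(D)$ of sizes $(1+\beta)m$, $2m$, $3m$ respectively, such that every vertex $w \in V(D)$ satisfies $|N^+(w) \cap X|, |N^-(w) \cap X| \geq 2\beta m$; such a choice exists by a routine greedy/probabilistic argument using $\delta^0(D) \geq \alpha n$ and $\beta \ll \alpha$. Identify $X, Y, Z$ with the partition classes of the bipartite graph $H_m$ from Lemma~\ref{lem::Mont-graph}. Next, reserve $3m$ disjoint segments of $P$, each of length $2k+5$, one per vertex of $Z$, with positions and orientations fixed in advance according to $P$. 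For each $z \in Z$ in turn, apply Lemma~\ref{lem::lin_many} with the forbidden set $U$ being $X \cup Y \cup Z$ together with all previously constructed absorbers; this yields a $P_z$-absorber $(A_z, v_z)$ for $(N_{H_m}(z), z)$ (note $|N_{H_m}(z)| \leq 40$), disjoint from $U$. Since $|U| = O(m) \leq \alpha n / 2$ throughout, the lemma applies at each step. Finally, chain the absorbers into a single long oriented path by using Lemma~\ref{lem::connecting} to link each endpoint $z$ of $A_z$ to the startpoint $v_{z'}$ of the next $A_{z'}$ via a short bidirected connector path, which can be oriented to match any required piece of $P$. Let $W$ denote the set of $O(m)$ connector vertices, and define $A := X \cup Y \cup Z \cup \bigcup_{z \in Z} A_z \cup W$; then $|A| = O(m) \leq \eta |V(P)|$.

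To verify the absorbing property, let $R \subseteq V(D) \setminus A$ with $|R| + |A| = |V(P)|$ and let $v, v' \in R$ be distinct. Pick any $X^* \subseteq X$ of size $m$; by Lemma~\ref{lem::Mont-graph} there is a perfect matching $M$ between $X^* \cup Y$ and $Z$ in $H_m$. For each edge $sz \in M$, absorb $s$ via $A_z$, obtaining a long oriented path $Q_B$ covering $A \setminus (X \setminus X^*)$ whose orientation agrees with a designated ``body'' subpath of $P$. Then apply Lemma~\ref{lem::connecting2} with the flexible set $X \setminus X^*$ (of size $\beta m \geq 5\varepsilon n$), the vertex set $R$, and endpoints $v, v'$ to embed the remaining portion of $P$ covering $R$. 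Combining $Q_B$ with this $R$-handling path, using the bipseudorandomness of $D$ to add a small number of bidirected edges at the interface, yields the desired copy of $P$ in $D[A \cup R]$ from $v$ to $v'$.

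The main obstacle is the orientation bookkeeping. Each local absorber has a rigid orientation fixed at construction time, and one must orchestrate the decomposition of $P$ so that (i) the concatenated local absorbers (with their matching-driven choice of which vertex to absorb) form a copy of the prescribed body segment of $P$; (ii) the portion handled by Lemma~\ref{lem::connecting2} matches the remaining orientation of $P$; and (iii) the two portions glue together coherently at their interface. Because $H_m$ has bounded degree $40$, each absorbed vertex $s \in X \cup Y$ can play only finitely many roles along $P$, so the local flexibility required of each $A_z$ is finite (this is precisely property $(\beta)$ from Section~\ref{subsec::ideas}); but aligning all these choices consistently into a single oriented copy of $P$, independently of the matching $M$ selected for $R$, is the delicate step of the proof.
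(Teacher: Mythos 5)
The construction phase of your proposal tracks the paper's approach closely, with one cosmetic difference: you chain the local absorbers by inserting fresh bidirected connector paths $W$ between them, whereas the paper avoids any connectors by making the startpoint of the $i$th local absorber equal the previous $Z$-vertex $z_{i-1}$ (so consecutive absorbers share a vertex and concatenate automatically). Either works.

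The verification step, however, has a genuine gap: the order of operations is backwards. You first choose $X^* \subseteq X$ of size $m$, extract the matching $M$, activate the absorbers to build $Q_B$ covering $A \setminus (X \setminus X^*)$, and only then invoke Lemma~\ref{lem::connecting2} on $X \setminus X^*$ to handle $R$. This cannot work for two reasons. First, Lemma~\ref{lem::connecting2} takes as input a set of size $(1+\beta')m'$ and produces a path covering $R$ together with exactly $\beta' m'$ vertices of that set, leaving $m'$ of them uncovered; there is no choice of $\beta',m'$ for which the lemma covers \emph{all} of a set of size $\beta m$. Thus the leftover $X \setminus X^*$ vertices are not all absorbed, and the final path is not a spanning path of $A \cup R$. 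Second, and more fundamentally, even granting the size issue, the $\beta m$ vertices of $X$ used by Lemma~\ref{lem::connecting2} are chosen internally by the lemma --- you cannot prescribe in advance that they will be precisely $X \setminus X^*$ for an $X^*$ you fixed beforehand. The correct order (as in the paper) is: apply Lemma~\ref{lem::connecting2} to all of $X$ and $R$ first, let $X'$ be the $m$ vertices of $X$ \emph{not} used by the resulting path, and only then invoke the robust matching property of $H_m$ to match $X' \cup Y$ to $Z$. The whole point of the robust matching graph $H_m$ is that the matching exists for \emph{whatever} leftover $X'$ the connecting lemma produces.

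A secondary but related issue: your plan produces two disjoint pieces --- $Q_B$ covering part of $A$, and a second path from $v$ to $v'$ covering $R$ --- and proposes to ``glue at the interface with a small number of bidirected edges.'' But $Q_B$ and the second path each have their own startpoint and endpoint, none of which are $v,v'$ except for the second path's; adding edges at the interface either creates a path whose endpoints are wrong or introduces new vertices not in $A \cup R$. The paper's fix is to thread the connecting-lemma path through a designated vertex of $A$ (the startpoint $z_{3m}$ of its Lemma~\ref{lem::connecting2} call is the last $Z$-vertex, not $v$), and to use a short initial segment $(v,x_1,x_2,z_0)$ through two vertices of $X$ to pass from $v$ into the chain of absorbers. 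Without such a designated interface vertex already inside $A$, the concatenation does not produce a single copy of $P$ from $v$ to $v'$.
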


\begin{proof}
Given~$\alpha$ and $\eta$, define~$p :={\eta \alpha}/{2024}$.  Set $\beta:=\alpha/10$ and $\eps:= p \beta/{6}$.
Let~$P=(u_1, \dots, u_k)$ with~$k:=\lceil \alpha n/4\rceil$.

By applying the Chernoff bound for the hypergeometric distribution, we  obtain a set $X \subseteq V(D)$  such that
\begin{enumerate}[label={($X$\arabic*)}]
 \item \label{X1} $|X| = (1+\beta)m$ with $p n  < m < |X| < 2p n$ and $\beta m \geq 5\ep n +2$, where we assume that $m$ and $\beta m$ are integers and ignore inconsequential rounding issues;
 \item \label{X2} for every vertex $v \in V(D)$ and for every $* \in \{+,-\}$,
 $|N^*(v) \cap X| \geq \frac{p}{2}|N^*(v)|\geq \frac{p\alpha n}{2} \geq 2\beta m +2$.
\end{enumerate}
Note that  $X$ satisfies the hypothesis of Lemma~\ref{lem::connecting2}, which will be used later. Arbitrarily choose two disjoint sets~$Y, Z\subseteq V(D)\setminus X$ of sizes~$2m$ and~$3m$, respectively. We form an auxiliary graph~$H_m$ isomorphic to the graph from Lemma~\ref{lem::Mont-graph} on $X \cup Y \cup Z$. We label $Z$ as $\{z_1, \dots, z_{3m}\}$, and let $N_i \subseteq X \cup Y$ be a set of size exactly $40$ containing $N_{H_m}(z_i)$.

We are now ready to construct the global absorber. We will identify a particular segment of $P$ for each $i \in [3m]$, and use Lemma~\ref{lem::lin_many} to obtain a
local absorber for~$(N_i,z_i)$ for such a segment.
These local absorbers combined will act as a global absorber since we can apply Lemma~\ref{lem::connecting2} to form the rest of $P$ with any appropriate set $R$ of vertices we wish to absorb, using exactly $\beta m$ vertices of $X$ in the process; the remaining part of $X$ along with $Y$ is matched to $Z$ via the property of $H_m$ given in Lemma~\ref{lem::Mont-graph}, and this matching will tell us how to use each local absorber; see Figure~\ref{fig::global_abs}.

\smallskip

Let $z_0 \in V(D) \setminus (X \cup Y \cup Z)$ be arbitrary. For each $i \in [3m]$, we find a $(u_{84i-80}, \dots, u_{84i+4})$-absorber $(A_i,z_{i-1})$ for $(N_i, z_i)$ such that $z_{i-1} \in A_i$, the sets $A_i$ for $i\in [3m]$ are pairwise disjoint and disjoint from $X \cup Y \cup \{z_{3m}\}$.\footnote{Note that, for each $i \in [3m]$, $z_i \not\in A_i$ and $z_{i-1} \in A_i$. Then the $A_i$'s cannot be disjoint from $Z$. Hence, we only ask them to be disjoint from $\{z_{3m}\}$.}
This is possible by applying Lemma~\ref{lem::lin_many} (with $40$ playing the role of $k$), since we require the absorbers to be disjoint from at most
\begin{equation}\label{eq::Asize}
|A| = |X \cup Y| + 1 + \sum_{i=1}^{3m} |A_i| = 1 + (3+\beta)m + 3m\cdot 83 = (252+\beta)m+1 \leq \eta |V(P)| \leq \alpha n/2
\end{equation}
vertices, where we define
\[ A := X \cup Y \cup \{z_{3m}\} \cup \bigcup_{i \in [3m]} A_i .\]
We claim that $A$ is a $P$-global absorber, which by~\eqref{eq::Asize} has size at most $\eta |V(P)|$.

Let $R \subseteq V(D) \setminus A$ be such that $|R| + |A| = |V(P)| = k$, and let $v,v' \in R$ be distinct. By~\ref{X1} and \ref{X2}, we have that $\delta^0(D[X \cup \{v, z_0\}]) \geq 2\beta m \geq 10\ep n \geq 2 \eps n +1$, so we may apply Proposition~\ref{prop::connecting} to obtain a copy $(v, x_1, x_2, z_0)$ of $(u_1, u_2, u_3, u_4)$, where $x_1, x_2 \in X$.

Let $\bar{X} := X \setminus \{x_1, x_2\}$, let $\bar{\beta} := \beta - \frac{2}{m}$, and let $\bar{R} := (R \cup \{z_{3m}\}) \setminus \{v\}$. By~\ref{X1}, $|\bar{X}| = (1+\bar{\beta})m \geq m + 5\ep n$, and by~\ref{X2}, for every $v \in V(D)$ and for every $* \in \{+,-\}$, we have $|N^*(v) \cap \bar{X}| \geq 2\beta m$. Therefore we may apply Lemma~\ref{lem::connecting2} to obtain a copy $Q$ of $(u_{252m+4}, \dots, u_k)$ in $D[\bar{X} \cup \bar{R}]$ covering $\bar{R}$ and exactly $\bar{\beta} m$ vertices of $\bar{X}$ with startpoint $z_{3m}$ and endpoint $v'$.


Now we activate the local absorbers. Let $X' := \bar{X} \setminus V(Q)$, and note that $|X'| = m$. By Lemma~\ref{lem::Mont-graph}, there exists a matching between $Z$ and $X' \cup Y$ in $H_m$. Fixing such a matching, let $z_i' \in N_i$ be the vertex matched to $z_i$ for each $i \in [3m]$. Since $(A_i, z_{i-1})$ is a $(u_{84i-80}, \dots, u_{84i+4})$-absorber for $(N_i, z_i)$, there exists a copy $Q_i$ of $(u_{84i-80}, \dots, u_{84i+4})$ in $D[A_i \cup \{z_i, z_i'\}]$ with startpoint $z_{i-1}$ and endpoint $z_i$. Concatenating as
\[ (v, x_1, x_2, z_0) \circ Q_1 \circ \cdots \circ Q_{3m} \circ Q ,\]
we obtain a copy of $P$ in $D[A \cup R]$ with startpoint $v$ and endpoint $v'$.
\end{proof}

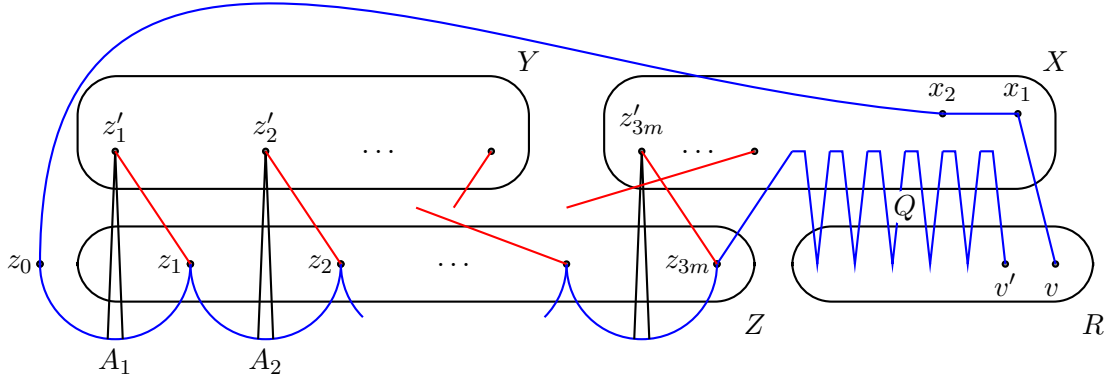
\begin{figure}[h!]
\centering
\begin{tikzpicture}[thick,scale=1]

 
\begin{scope}[xshift = -1cm, yshift = -1cm]
\node[draw=none, fill=white] at (5.5,3.7) {$Y$};
\draw (0,2.5) node[label=above:{$z_1'$}] {};
\draw (2,2.5) node[label=above:{$z_2'$}] {};
\draw (5,2.5) node[] {};

\node[draw=none, fill=white] at (3.5,2.5) {$\ldots$};

\draw[rounded corners=15] (-.5,2) rectangle (5.5,3.5);
\end{scope} 

\begin{scope}[xshift = 6cm, yshift = -1cm]
\node[draw=none, fill=white] at (5.5,3.7) {$X$};
\draw (0,2.5) node [label=above:{$z_{3m}'$}] {};
\node[draw=none, fill=white] at (.75,2.5) {$\ldots$};
\draw (1.5,2.5) node [] {};
\draw (4,3) node [label=above:{$x_2$}] {};
\draw (5,3) node [label=above:{$x_1$}] {};

\draw[rounded corners=15] (-.5,2) rectangle (5.5,3.5);
\end{scope} 

\begin{scope}[xshift = -2cm, yshift = 0cm]
\node[draw=none, fill=white] at (9.5,-0.8) {$Z$};
\foreach \y in {0,2,4,7,9} {
    \draw (\y,0) node [] {};}

\foreach \y in {0,1,2} {   
\draw (2*\y,0) node [label=left:{$z_{\y}$}] {};}
\draw (9,0) node [label=left:{$z_{3m}$}] {};

\node[draw=none, fill=white] at (5.5,0) {$\ldots$};

\draw[rounded corners=15] (.5,-.5) rectangle (9.5,.5);
\end{scope} 

\begin{scope}[xshift = 7.5 cm, yshift = 0cm]
\node[draw=none, fill=white] at (4.5,-0.8) {$R$};
\draw (3+1/3,0) node [label=below:{$v'$}] {};
\draw (4,0) node [label=below:{$v\phantom{'}$}] {};

\draw[rounded corners=15] (.5,-.5) rectangle (4.5,.5);
\end{scope} 

\foreach \x in {0,.5,1,1.5,2}{
\draw[blue] (8+\x,1.5) -- (8+1/6+\x,1.5) -- (8+1/3+\x,0) -- (8+\x+1/2,1.5);
}
\draw[blue] (7,0) -- (8,1.5);
\draw[blue] (8+2.5,1.5) -- (8+1/6+2.5,1.5) -- (8+1/3+2.5,0);
\node[draw=none, fill=white] at (9.5,.75) {$Q$};

\draw[blue] (-2,0) .. controls (-2,6) and (4,2.5) .. (10,2);
\draw[blue] (10,2) -- (11,2) -- (11.5,0);

\foreach \x in {-2,0,5}{
\draw[thick, blue] (\x,0) arc (180:360:1);}
\draw[thick, blue] (2,0) arc (180:225:1);
\draw[thick, blue] (5,0) arc (360:315:1);

\draw[thick] (0.9,-1) -- (1,1.5) -- (1.1,-1);
\draw[thick] (-0.9,-1) -- (-1,1.5) -- (-1.1,-1);
\draw[thick] (5.9,-1) -- (6,1.5) -- (6.1,-1);

\node[draw=none, fill=white] at (-1,-1.3) {$A_1$};
\node[draw=none, fill=white] at (1,-1.3) {$A_2$};
\node[draw=none, opacity=0, text opacity=1] at (6,-1.3) {$A_{3m}$};

\draw[thick, red] (0,0) -- (-1,1.5);
\draw[thick, red] (2,0) -- (1,1.5);
\draw[thick, red] (3.5,.75) -- (4,1.5);
\draw[thick, red] (7,0) -- (6,1.5);
\draw[thick, red] (5,0) -- (3,.75);
\draw[thick, red] (5,.75) -- (7.5,1.5);
\end{tikzpicture}

\caption{
The global absorber. The blue path with startpoint $v$ and endpoint $v'$ is a copy of $P$ covering $R$, $X$, $Y$, and $Z$. The red edges are the matching in the auxiliary graph $H_m$, dictating which local absorber $A_i$ to use for each vertex in $X \cup Y$.}
\label{fig::global_abs}
\end{figure}

\section{Proof of Theorem~\ref{thm::semideg_simple}}\label{sec::proof}

\begin{proof}[Proof of Theorem~\ref{thm::semideg_simple}]
Given $\alpha>0$, let~$\eps>0$ be as in Lemma~\ref{lem::global_abs} on input $\alpha$ and~$\eta := 1/2$. Set~$C := 4e/\ep^2$. 
Let $D_0$ be an $n$-vertex digraph with~$\delta^0(D_0)\geq \alpha n$.

Given any orientation of a cycle $\mathcal C$ of length between $3$ and $n$, our first aim is to prove that $D:=D_0\cup D^*(n, C/n)$ contains a copy of~$\mathcal C$ with probability at least $1-e^{-n}$.
Note that Lemma~\ref{lem::pseudo} implies that $D^*(n, C/n)$ is~$\eps n$-bipseudorandom with probability at least $1-e^{-n}$; thus, we may assume that $D$ is $\eps n$-bipseudorandom.

If $|\mathcal C| = 3$, we use a proof similar to that of Proposition~\ref{prop::connecting}. Fix a vertex $v \in V(D)$, and consider $N^+_D(v)$ and $N^-_D(v)$, which both have size at least $\alpha n \geq 2\ep n$. Between  these sets there is a bidirected edge, giving both possible orientations of $\mathcal C$.

If~$4 \leq \vert \mathcal C\vert \leq \alpha n/2$, then we apply Lemma~\ref{lem::longpath} to find a bidirected path $Q_\bi$ in $D$ on~$\vert \mathcal C\vert - 2$ vertices.
Let~$v$ and $v'$ be the startpoint and endpoint of $Q_\bi$, respectively, and let $P$ be a subpath of $\mathcal C$ on $3$ edges.
Observe that~$\delta^0(D[(V(D)\setminus V(Q_\bi)) \cup \{v,v'\}])\geq \alpha n /2 \geq 2\ep n+1$, and hence we may apply Proposition~\ref{prop::connecting} to find a copy $Q$ of $P$ in $D$ with startpoint $v'$ and endpoint $v$. Joining $Q$ and $Q_\bi$ at both ends, we obtain a copy of $\mathcal C$ in $D$.

If $|\mathcal C| \geq \alpha n/2$, then let $P$ be a subpath of $\mathcal{C}$ on $\ceil{\alpha n/4}$ vertices. We apply Lemma~\ref{lem::global_abs} to find a~$P$-global absorber~$A$ of size at most~$\ceil{\alpha n/8}$.
Since $D[V(D)\setminus A]$ is $\eps n$-bipseudorandom, Lemma~\ref{lem::longpath} yields a bidirected path on at least $n - |A| - 2\eps n > n-|P|+2$  vertices disjoint from $A$. 
Ignoring some vertices, let $Q_\bi$ be a bidirected path on~$\vert \mathcal C\vert-|P|+2$ vertices in $D[V(D) \setminus A]$, and let $v$ and $v'$ be the startpoint and endpoint of $Q_\bi$, respectively.
Let $R \subseteq (V(D) \setminus (V(Q_\bi) \cup A)) \cup \{v, v'\}$ with $v,v' \in R$ and $|R| = |P| - |A|$. By Definition~\ref{def::global_abs}, there is a copy $Q$ of $P$ in $D$ with startpoint $v'$ and endpoint $v$ covering exactly the vertices of $R\cup A$. Joining $Q$ and $Q_\bi$ at both ends, we obtain a copy of $\mathcal C$ in $D$.

Thus, for every orientation of a cycle $\mathcal C$ of length between $3$ and $n$ we have that $D_0 \cup D^*(n,C/n)$ contains a copy of $\mathcal C$ with probability at least $1-e^{-n}$. By Lemma~\ref{lem::coupling}, $D_0 \cup D(n,C/n)$ contains a copy of $\mathcal C$ with probability at least $1-e^{-n}$. Taking a union bound over all $n-2$ possible lengths and all at most $2^n$ possible orientations of each length, we have that $D_0 \cup D(n,C/n)$ contains every orientation of a cycle of length between $3$ and $n$ a.a.s.

Finally, to see that $D_0 \cup D(n,C/n)$ contains a cycle of length $2$ a.a.s., simply observe that $D_0$ has at least $\alpha n^2$ directed edges, and so the probability no edge of $D(n,C/n)$ is in the opposite orientation of an edge of $D_0$ is at most
\[ (1-C/n)^{\alpha n^2} \leq e^{- C\alpha n} .\qedhere\]
\end{proof}

\section{The total degree absorbing lemma}\label{sec::total}

While following the same general outline as the proof of Theorem~\ref{thm::semideg_simple}, the proof of Theorem~\ref{thm:main} requires several more details in order to deal with complications arising from two sources. First, since the statement of  Theorem~\ref{thm:main} would be false if
we relaxed it to a statement about arbitrary orientations of cycles, our proof needs to exploit 
the property that the cycles we wish to embed do not have $(1-o(1))n$ vertices of indegree $1$.
Second, the condition $\delta(D) \geq 2 \alpha n$ is only enough to give that $d^+(v) \geq \alpha n$ or $d^-(v) \geq \alpha n$, but not necessarily both, for each vertex $v \in V(D)$. After introducing some convenient notation, we redefine the global and local absorbers from Section~\ref{sec::absorber} to fit our needs here. The statements of the absorbing lemma and helper lemmas are very similar to those in Section~\ref{sec::absorber}, of course with additional technicalities. 

Let $P = (u_1, \dots, u_k)$ be an oriented path. Recall that we call $u_1$ the \emph{startpoint} of $P$ and $u_k$ the \emph{endpoint} of $P$, and recall that $\sigma(u_i u_{i+1}) = +$ if $\forw{u_i u_{i+1}} \in E(P)$ and $\sigma(u_i u_{i+1}) = -$ otherwise. For $i \in [k-1]\setminus \{1\}$, we call $u_i$ a \emph{swap vertex} of $P$ if the indegree of $u_i$ in $P$ is $0$ or $2$. At swap vertices, the directions of the edges of an oriented path change from forwards to backwards, or vice versa. Note that the `type' of swap vertices alternate along the path between indegree $0$ and $2$. Recall that when the endpoint of $P$ is the startpoint of $P'$ and the oriented paths $P$ and $P'$ are otherwise vertex-disjoint,  $P \circ P'$ denotes the concatenation of the two paths.

We cannot hope to find a copy of a given oriented path with prescribed startpoint and endpoint in a digraph $D$ unless those vertices have suitably high in- or outdegree in $D$. This motivates the following two definitions.

\begin{definition}[$\alpha$-compatible]\label{def::total:compatible} \rm
Let $P = (u_1, \dots, u_k)$ be an oriented path, let $D$ be an $n$-vertex digraph, and let $\alpha > 0$. For $v_1,v_k \in V(D)$, we say that $(v_1,v_k)$ is \emph{$\alpha$-compatible with $P$} if $d^*(v_1) \geq \alpha n$ for $* = \sigma(u_1 u_2)$ and $d^*(v_k) \geq \alpha n$ for $* = \sigma(u_k u_{k-1})$.
\end{definition}

\begin{definition}[Global absorber]\label{def::total:global} \rm
Let $P$ be an oriented path, let $D$ be a digraph, and let $\alpha > 0$. A subset $A \subseteq V(D)$ is  a \emph{$(P,\alpha)$-global absorber} if for every $R \subseteq V(D) \setminus A$ such that $|R| + |A| = |V(P)|$, and for every pair of distinct $v,v' \in R$ such that $(v,v')$ is $\alpha$-compatible with $P$, there is a copy of $P$ in $D[A \cup R]$ with startpoint $v$ and endpoint $v'$.
\end{definition}

As in Section~\ref{sec::absorber}, the global absorber will be constructed out of smaller units called local absorbers, defined in Definition~\ref{def::total:local}. We use a slightly expanded definition of local absorber as compared to Definition~\ref{def::local_abs}, so that we have the added flexibility of specifying the endpoint of the local absorber.


\begin{definition}[Local absorber]\label{def::total:local} \rm
Let $P$ be an oriented path, $D$ be a digraph, $S \subseteq V(D)$, and $z \in V(D) \setminus S$. A triple $(A,v,v')$ is a \emph{$P$-absorber for $(S,z)$} if
\begin{itemize}
    \item $A \subseteq V(D) \setminus (S \cup \{z\})$ is a set of $|V(P)|-2$ vertices,
    \item $v,v' \in A$, with $v \neq v'$,
    \item for every $s \in S$, $D[A \cup \{s,z\}]$ contains a copy of $P$ with startpoint $v$ and endpoint $v'$.
\end{itemize}
We call $v$ the \emph{startpoint} and $v'$ the \emph{endpoint} of the $P$-absorber $(A,v,v')$.
\end{definition}

The next lemma guarantees the existence of local absorbers avoiding some small set of vertices --- this ensures that all the local absorbers we find will be vertex-disjoint.

\begin{lemma}\label{lem::total:local}
Let $n,k, \ell \in \mathbb N$ and $\eps,\alpha >0$ so that 
$\frac{1}{n} \leq \ep \leq \frac{\alpha}{8k}$ and $k \geq 3\ell +9$. Let $D$ be an $n$-vertex $\ep n$-bipseudorandom digraph with $\delta(D) \geq 2\alpha n$. Let $U \subseteq V(D)$ so that  $|U| \leq \alpha n/2$, and let $v,v' \in V(D) \setminus U$ be distinct. Let $P$ be an arbitrary   oriented path $P$ on $k$ vertices with at least $3\ell+7$ swap vertices such that $(v,v')$ is $\alpha$-compatible with $P$. For every $S \subseteq V(D) \setminus \{v,v'\}$ of size $\ell$, and every vertex $z \in V(D) \setminus (S \cup \{v,v'\})$, there exists a $P$-absorber $(A,v,v')$ for $(S,z)$ disjoint from $U$.
\end{lemma}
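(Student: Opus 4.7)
The plan is to adapt the backbone construction of Lemma~\ref{lem::lin_many} to the total-degree setting. Since $\delta(D)\ge 2\alpha n$, each vertex $w\in S\cup\{z\}$ has at least one of $d^+(w),d^-(w)$ equal to at least $\alpha n$; assign $w$ the type $\tau_w\in\{+,-\}$ accordingly. A one-sided vertex can only play a swap vertex of $P$ of matching type in a copy of $P$, which is why the hypothesis requires many swap vertices of $P$.

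The main technical step is to choose $\ell+1$ positions $p_0<p_1<\dots<p_\ell$ with $p_0\in\{4,\dots,k-4\}$ and $p_i\in\{5,\dots,k-3\}$ for $i\ge 1$, such that $u_{p_0}$ is a swap vertex of type $\tau_z$, the multiset of types of $u_{p_1},\dots,u_{p_\ell}$ matches $\{\tau_s:s\in S\}$, $p_1-p_0\ge 3$ (the $z$-boundary spacing), and $p_{i+1}-p_i\ge 2$ for $i\ge 1$. Consecutive swap vertices of $P$ alternate in type, because between any two consecutive swap vertices all edges of $P$ are oriented consistently. The assumed $\ge 3\ell+7$ swap vertices, after excluding at most four positions near the endpoints of $P$, yield $\ge 3\ell+3$ swap vertices in $\{u_4,\dots,u_{k-3}\}$; by alternation each type has $\ge\lfloor(3\ell+3)/2\rfloor\ge\ell+2$ representatives there (for $\ell\ge 1$; $\ell=0$ is trivial). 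Since same-type swap vertices automatically have position spacing $\ge 2$, the extra representative of each type provides the slack to create a spacing-$\ge 3$ gap at the $z$-boundary by skipping one swap vertex. Finally reorder $S=\{s_1,\dots,s_\ell\}$ so that $\tau_{s_i}$ equals the type of $u_{p_i}$.

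Set $q:=p_0-2$ and $f(i):=p_i-3$, and apply Lemma~\ref{lem::connecting} to build a bidirected path $(b_1,\dots,b_{k-4})$ in $D$ with $b_1\in N^{\sigma(u_1u_2)}(v)$, $b_{k-4}\in N^{\sigma(u_{k-1}u_k)}(v')$, $b_q,b_{q+1}\in N^{\tau_z}(z)$, $b_{f(i)},b_{f(i)+1}\in N^{\tau_{s_i}}(s_i)$ for every $i\in[\ell]$, and all $b_j$ distinct and disjoint from $U\cup\{v,v',z\}\cup S$. The spacing conditions on $p_0,\dots,p_\ell$ guarantee that each $b_j$ is constrained by at most one vertex among $v,v',z,s_1,\dots,s_\ell$. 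Each constrained neighborhood has size $\ge\alpha n$; using $|U|\le\alpha n/2$ and $\eps\le\alpha/(8k)$, the total vertices carved across all $B_j$'s is at most $2(k-4)\eps n\le\alpha n/4$, so each constrained neighborhood still supplies the disjoint set $B_j$ of size $\ge 2\eps n$ required by the Connecting Lemma.

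Setting $A:=\{v,v'\}\cup\{b_1,\dots,b_{k-4}\}$ gives $|A|=k-2$, and $A$ is disjoint from $U\cup\{z\}\cup S$. For every $s_i\in S$, the path
\[(v,b_1,\dots,b_q,z,b_{q+1},\dots,b_{f(i)},s_i,b_{f(i)+1},\dots,b_{k-4},v')\]
is a copy of $P$ in $D[A\cup\{s_i,z\}]$ with startpoint $v$ and endpoint $v'$: edges between consecutive backbone vertices are bidirected and therefore accommodate any orientation prescribed by $P$; the first and last edges are oriented correctly by $\alpha$-compatibility of $(v,v')$; and the four edges incident to $z$ and $s_i$ are oriented correctly because $u_{p_0}$ and $u_{p_i}$ are swap vertices of types matching $\tau_z$ and $\tau_{s_i}$, which forces both edges at each inserted vertex into its large directional neighborhood. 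The main obstacle will be the position selection step, where both type matching and the $z$-boundary spacing of $\ge 3$ must be simultaneously satisfied using only the slight surplus of swap vertices of each type available in the allowed range.
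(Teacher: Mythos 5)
Your proof is essentially the same as the paper's. Both reorder $S$ so that a one-sided vertex is assigned to a swap vertex whose in/outdegree in $P$ matches its dominant direction, build a bidirected backbone via Lemma~\ref{lem::connecting} with neighborhood-constrained $B_j$'s, and insert $z$ and one $s_i$ between appropriate consecutive backbone vertices; the only cosmetic difference is that you place $z$ at the smallest chosen position $p_0$ with the slack gap after it, whereas the paper places $z$ at the largest position $i_{\ell+1}$ with the slack gap before it. One trivial typo: $b_{k-4}$ should lie in $N^{\sigma(u_k u_{k-1})}(v')$ (matching the definition of $\alpha$-compatibility), not $N^{\sigma(u_{k-1} u_k)}(v')$.
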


\begin{proof}
Let $P = (u_1, \dots, u_k)$. Label $S \cup \{z\}$ as $z_1, \dots, z_{\ell+1}$, where $z_{\ell+1} := z$. We will find a $P$-absorber for $(S,z)$ by applying  Lemma~\ref{lem::connecting} to various neighborhoods of the $z_i$, $v$ and $v'$. Let $*_i := +$ if $d^+(z_i) \geq \alpha n$, and let $*_i := -$ otherwise. Choose $\ell+1$ swap vertices of $P$, $u_{i_1}, \dots, u_{i_{\ell+1}}$, such that
\begin{itemize}
    \item $i_{j+1} - i_j \geq 2$ for $0 \leq j \leq \ell+1$, where $i_0 := 2$ and $i_{\ell+2} := k-1$,
    \item $i_{\ell+1} - i_{\ell} \geq 3$,
    \item $d_P^{*_j}(u_{i_j}) = 2$ for every $j \in [\ell+1]$.
\end{itemize}
This is possible because $P$ has at least $3\ell+7$ swap vertices, and they alternate having in- or outdegree $2$. Define
\begin{itemize}
    \item $B_1: = N^*(v)$, where $* = \sigma(u_1 u_2)$,
    \item $B_{i_j-2} := B_{i_j-1} := N^{*_j}(z_j)$ for $j \in [\ell]$,
    \item $B_{i_{\ell+1}-3} := B_{i_{\ell+1}-2} := N^{*_{\ell+1}}(z_{\ell+1})$,
    \item $B_{k-4} := N^*(v')$, where $* = \sigma(u_k u_{k-1})$,
    \item $B_i := V(D)$ for all remaining $i\in [k-4]$.
\end{itemize}
Since $(v,v')$ is $\alpha$-compatible with $P$, and by the definition of the $*_j$, we have that $|B_i| \geq \alpha n$ for every $i \in [k-4]$. Since $|U| \leq \alpha n/2$, there exists pairwise disjoint subsets $B_i' \subseteq B_i\setminus (U \cup S \cup \{z\})$ such that for all $i \in [k-4]$,
$$|B_i'| \geq \frac{(\alpha n- \alpha n/2 -\ell -1)}{(k-4)} \geq 
\frac{\alpha n}{4(k-4)} \geq
2 \ep n.$$
Lemma~\ref{lem::connecting} gives a bidirected path $(v_1, \dots, v_{k-4})$ in $D$ with $v_i \in B' _i$ for every $i \in [k-4]$. Let $A := \{v, v_1, \dots, v_{k-4}, v'\}$, and note that $A$ is disjoint from $U$ and $S \cup \{z\}$.

To see that $A$ is a $P$-absorber for $(S,z)$, note that for every $z_j \in S$, the path
\[ (v, v_1, \dots, v_{i_j-2}, z_j, v_{i_j-1}, \dots, v_{i_{\ell+1}-3}, z, v_{i_{\ell+1}-2}, \dots, v_{k-4}, v') \]
is a copy of $P$ in $D[A \cup \{z_j,z\}]$ with startpoint $v$ and endpoint $v'$; see Figure~\ref{fig::local_abs_total_deg}.
\end{proof}

\begin{figure}[h!]
 \centering
 \begin{tikzpicture}[thick,scale=1]
 
 \node[draw=none, fill=white] at (-3.5,3) {$P:$};
 \node[draw=none, fill=white] at (-3.5,0.8) {$A \cup S \cup \{z\}:$};
 \node[draw=none, fill=white] at (-3.5,-3) {Copy of $P$ in $A \cup \{s_2, z\} :$};

\begin{scope}[xshift=-1cm, yshift=0cm]
    \foreach \y in {1,...,13} {
	 \draw (\y-2,3) node [label=below:{$\scaleto{u_{\y}}{5pt}$}] {};}
    
    \draw[middlearrow={latex}, blue] (-1,3) -- (0,3);
    \draw[middlearrow={latex reversed}, blue] (0,3) -- (1,3);
    \draw[middlearrow={latex reversed}, blue] (1,3) -- (2,3);
    \draw[middlearrow={latex}, blue] (2,3) -- (3,3);
    \draw[middlearrow={latex reversed}, blue] (3,3) -- (4,3);
    \draw[middlearrow={latex}, blue] (4,3) -- (5,3);
    \draw[middlearrow={latex reversed}, blue] (5,3) -- (6,3);
    \draw[middlearrow={latex reversed}, blue] (6,3) -- (7,3);
    \draw[middlearrow={latex reversed}, blue] (7,3) -- (8,3);
    \draw[middlearrow={latex}, blue] (8,3) -- (9,3);
    \draw[middlearrow={latex}, blue] (9,3) -- (10,3);
    \draw[middlearrow={latex reversed}, blue] (10,3) -- (11,3);  
\end{scope}

 \foreach \y in {1,...,8} {
	 \draw[thin,double] (\y-1,0) -- (\y,0);
	 \draw (\y-1,0) node [label=below:{$\scaleto{v_{\y}}{5pt}$}] {};}
	\draw (8,0) node [label=below:{$\scaleto{v_{9}}{5pt}$}] {};
	
 \draw (-0.5,1.5) node [label=above:{$\scaleto{v_{\phantom{1}}}{5pt}$}] {};
 \draw (1.5,1.5) node [label=above:{$\scaleto{s_{1}}{5pt}$}] {};
 \draw (4.5,1.5) node [label=above:{$\scaleto{s_{2}}{5pt}$}] {};
 \draw (6.5,1.5) node [label=above:{$\scaleto{z_{\phantom{1}}}{5pt}$}] {};
 \draw (8.5,1.5) node [label=above:{$\scaleto{v_{\phantom{1}}'}{8pt}$}] {};

 \draw[middlearrow={latex}, blue] (-0.5,1.5) -- (0,0);
 
 \draw[middlearrow={latex}, blue] (1.5,1.5) -- (1,0);
 \draw[middlearrow={latex}, blue] (1.5,1.5) -- (2,0);
 
 \draw[middlearrow={latex reversed}, blue] (4.5,1.5) -- (4,0);
 \draw[middlearrow={latex reversed}, blue] (4.5,1.5) -- (5,0);
 
 \draw[middlearrow={latex}, blue] (6.5,1.5) -- (6,0);
 \draw[middlearrow={latex}, blue] (6.5,1.5) -- (7,0);
 
 \draw[middlearrow={latex}, blue] (8.5,1.5) -- (8,0);

\begin{scope}[xshift=0 cm,yshift=-3 cm] 
 
    \foreach \y in {1,...,8} {
     \draw (\y-1,0) node [label=below:{$\scaleto{v_{\y}}{5pt}$}] {};}
    \draw (8,0) node [label=below:{$\scaleto{v_{9}}{5pt}$}] {};
    
    \draw (4.5,1.5) node [label=above:{$\scaleto{s_{2}}{5pt}$}] {};
    \draw (6.5,1.5) node [label=above:{$\scaleto{z_{\phantom{1}}}{5pt}$}] {};
    \draw (8.5,1.5) node [label=above:{$\scaleto{v_{\phantom{1}}'}{8pt}$}] {};
    
    \draw (-0.5,1.5) node [label=above:{$\scaleto{v_{\phantom{1}}}{5pt}$}] {};
    
    \draw[middlearrow={latex}, blue] (-0.5,1.5) -- (0,0);
    
    \draw[middlearrow={latex reversed}, blue] (4.5,1.5) -- (4,0);
    \draw[middlearrow={latex reversed}, blue] (4.5,1.5) -- (5,0);
    
    \draw[middlearrow={latex}, blue] (6.5,1.5) -- (6,0);
    \draw[middlearrow={latex}, blue] (6.5,1.5) -- (7,0);
    
    \draw[middlearrow={latex}, blue] (8.5,1.5) -- (8,0);
 
    \draw[middlearrow={latex reversed}, blue] (0,0) -- (1,0);
    \draw[middlearrow={latex reversed}, blue] (1,0) -- (2,0);
    \draw[middlearrow={latex}, blue] (2,0) -- (3,0);
    \draw[middlearrow={latex reversed}, blue] (3,0) -- (4,0);
    \draw[middlearrow={latex reversed}, blue] (5,0) -- (6,0);
    \draw[middlearrow={latex}, blue] (7,0) -- (8,0);
\end{scope}
 \end{tikzpicture}
 \vspace{-1cm}
 \caption{
 An example of a $P$-absorber $(A,v,v')$ for $(S=\{s_1,s_2\},z)$ from Lemma~\ref{lem::total:local}.
 The double edges indicate that both orientations are present.
 Notice that $u_4, u_7, u_{10}$ are swap vertices of $P$, and for each fixed $i\in\{1,2\}$, $A \cup \{s_i, z\}$ contains a copy of~$P$ with startpoint vertex~$v$ and endpoint~$v'$ in which vertex~$z$ plays the role of~$u_{10}$, and either~$s_1$ plays the role of~$u_4$ or~$s_2$ plays the role of~$u_7$.
 }
 \label{fig::local_abs_total_deg}
\end{figure}
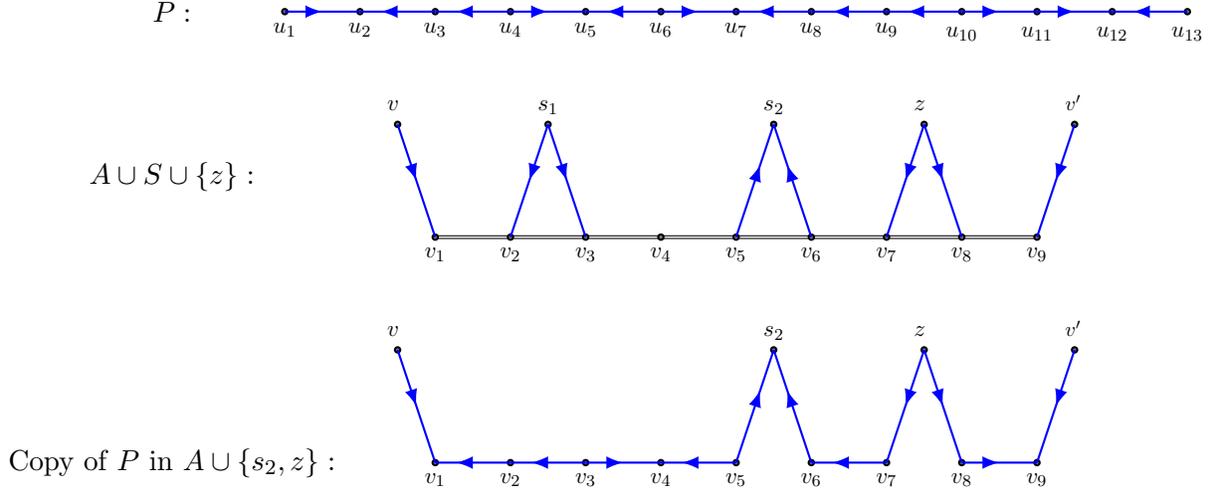

Our global absorber is structured and operates similarly to the global absorber in Section~\ref{sec::absorber}: given a set $R$ of vertices we wish to absorb, we first absorb $R$ using some vertices from a specific set $X$ of vertices, whose properties are given in Definition~\ref{def::total:reservoir}; the rest of the global absorber absorbs what is left of $X$ using carefully constructed local absorbers. The existence of an appropriate $X$ is given by Lemma~\ref{lem::total:reservoir}. Lemma~\ref{lem::total:link} helps Lemma~\ref{lem::total:chain} to absorb $R$ using $X$, and Lemma~\ref{lem::total:global} is where we actually construct the global absorber.

First, we need the following simple observation.

\begin{fact}\label{prop::manygoodout}
Let $n \in \mathbb N$ and $\alpha >0$ such that $2\alpha n+1 \leq n$.
Let $D$ be an $n$-vertex digraph with $\delta(D) \geq 2\alpha n$. Then there exists a partition $V^+ \cup V^-$ of $V(D)$ such that
for  each $* \in \{+,-\}$ we have that
$|V^*| \geq \alpha n/2$ and $d^*(v) \geq \alpha n/2$ for every $v \in V^*$.
\end{fact}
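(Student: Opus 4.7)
The plan is to partition based on indicator sets $U^+ := \{v \in V(D) : d^+(v) \geq \alpha n/2\}$ and $U^- := \{v \in V(D) : d^-(v) \geq \alpha n/2\}$, and verify that we can always choose a partition $V^+ \cup V^-$ of $V(D)$ with $V^+ \subseteq U^+$ and $V^- \subseteq U^-$ satisfying the size bounds.

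The first step is to note that $V(D) = U^+ \cup U^-$: if $d^+(v) < \alpha n/2$, then the total degree condition forces $d^-(v) \geq 2\alpha n - \alpha n/2 \geq \alpha n/2$. The second step is to show $|U^+|, |U^-| \geq \alpha n/2$. Summing $d^+(v) + d^-(v) \geq 2\alpha n$ over all $v$ and using $\sum_v d^+(v) = \sum_v d^-(v) = |E(D)|$ gives $|E(D)| \geq \alpha n^2$, so
\[
\alpha n^2 \leq \sum_v d^+(v) \leq |U^+|\cdot n + (n - |U^+|) \cdot \alpha n/2,
\]
which rearranges to $|U^+| \geq \alpha n/(2-\alpha) \geq \alpha n/2$; the bound for $|U^-|$ is symmetric.

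The final step is to produce the partition. Since $V(D) = U^+ \cup U^-$, we have both $V(D) \setminus U^- \subseteq U^+$ and $V(D) \setminus U^+ \subseteq U^-$. Combined with $|U^+|, |U^-| \geq \alpha n/2$ and $\alpha n/2 \leq n - \alpha n/2$ (valid since the hypothesis $2\alpha n + 1 \leq n$ forces $\alpha < 1/2$), this gives
\[
\max\{|V(D) \setminus U^-|,\, \alpha n/2\} \leq \min\{|U^+|,\, n - \alpha n/2\}.
\]
Hence one can pick any $V^+$ with $V(D) \setminus U^- \subseteq V^+ \subseteq U^+$ and $|V^+|$ in the above interval (for instance, $V^+ := (V(D) \setminus U^-) \cup X$ where $X \subseteq U^+ \cap U^-$ has size chosen to meet the target), and set $V^- := V(D) \setminus V^+$. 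By construction $V^+ \subseteq U^+$ and $V^- \subseteq U^-$, and both sides have size at least $\alpha n/2$, as required.

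There is no real obstacle here; the proof is a short counting step to lower-bound $|U^\pm|$ followed by a feasibility check that the desired partition exists within the constraints imposed by the containments and the size bounds.
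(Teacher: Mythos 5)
Your proof is correct and takes essentially the same approach as the paper: the same sets $U^{\pm}$, the same double-counting bound $|U^{\pm}| \geq \alpha n/2$, and the same observation that $U^+ \cup U^- = V(D)$. The only difference is cosmetic: the paper produces the partition by an explicit three-way case analysis on $|U^+ \setminus U^-|$, $|U^- \setminus U^+|$, and $|U^+ \cap U^-|$, whereas you replace this with a cleaner interval-feasibility argument, noting that any $V^+$ with $V(D)\setminus U^- \subseteq V^+ \subseteq U^+$ and $|V^+| \in [\max\{|V(D)\setminus U^-|, \alpha n/2\},\, \min\{|U^+|, n-\alpha n/2\}]$ works, and that this interval is nonempty and contains an integer.
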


\begin{proof}
Let $U^* := \{v \in V(D) : d^*(v) \geq \alpha n/2\}$ for each $* \in \{+,-\}$. Since $\delta(D) \geq 2\alpha n$, we have that
\[ \frac{\alpha n}{2} (n-|U^*|) + n|U^*| \geq |E(D)| \geq \alpha n^2 ,\]
which yields $|U^*| \geq \alpha n/2$ for each $* \in \{+,-\}$. Since $\delta(D) \geq 2\alpha n$, $U^+\cup U^-= V(D)$.

If $|U^+ \setminus U^-| \geq \alpha n/2$, then $V^+ := U^+ \setminus U^-$ and $V^- := U^-$ is a desired partition of $V(D)$. Similarly, we get the desired partition if $|U^- \setminus U^+| \geq \alpha n/2$.

Otherwise, we must have that  $|U^+ \cap U^-|\geq n-\alpha n \geq \alpha n +1$. In this case,
we partition $U^+ \cap U^-$ into $A \cup B$ with $||A| - |B|| \leq 1$ and set $V^+ := (U^+ \setminus U^-) \cup A$ and $V^- := (U^- \setminus U^+) \cup B$.
\end{proof}

\begin{definition}\label{def::total:reservoir}
Let $n \in \mathbb N$ and $\alpha, \beta >0$.
Let $D$ be an $n$-vertex digraph. We call $X \subseteq V(D)$ an $(\alpha,\beta,m)$-reservoir if
\begin{itemize}
    \item $|X| = (1+\beta)m$, where $m$ and $\beta m$ are both integers;
    \item for every $v \in V(D)$ and for every $* \in \{+,-\}$, if $d^*(v) \geq \alpha n / 2$, then $|N^*(v) \cap X| \geq 2\beta m$;
    \item there is a partition $X^+ $, $ X^-$ of $X$ such that $|X^+|, |X^-| \geq 2\beta m$, and for each $* \in \{+,-\}$
    we have that 
    $d^*(v) \geq \alpha n/2$ for every $v \in X^*$.
\end{itemize}
\end{definition}

\begin{lemma}\label{lem::total:reservoir}
Let $\alpha, \beta >0$ such that $2\beta \leq \alpha/3 \leq 1/9$, and let $m,n \in \mathbb N$ such that $\beta m \in \mathbb N$, $n$ is sufficiently large, and $\log n \ll m \leq 0.9 n$. Let $D$ be an $n$-vertex digraph with $\delta(D) \geq 2\alpha n$. Then there exists an $(\alpha,\beta,m)$-reservoir in $D$.
\end{lemma}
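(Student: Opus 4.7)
My plan is to obtain the reservoir by combining the deterministic degree-splitting from Fact~\ref{prop::manygoodout} with a uniform random sample. First, apply Fact~\ref{prop::manygoodout} to get a partition $V(D)=V^+\cupdot V^-$ with $|V^*|\ge \alpha n/2$ and $d^*(v)\ge \alpha n/2$ for every $v\in V^*$. Then sample $X\subseteq V(D)$ uniformly at random among subsets of size exactly $(1+\beta)m$ and define $X^*:=X\cap V^*$ for $*\in\{+,-\}$. By construction the third bullet of Definition~\ref{def::total:reservoir} is automatic (any vertex in $X^*$ lies in $V^*$, so $d^*(v)\ge \alpha n/2$), so I only need to check the size conditions and the neighborhood condition with high probability over the random choice of $X$.

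For the second bullet, fix $v\in V(D)$ and $*\in\{+,-\}$ with $d^*(v)\ge \alpha n/2$. The random variable $|N^*(v)\cap X|$ is hypergeometric with expectation
\[
\mathbb E\bigl[|N^*(v)\cap X|\bigr] \;=\; d^*(v)\cdot\frac{(1+\beta)m}{n} \;\geq\; \frac{\alpha(1+\beta)m}{2} \;\geq\; \frac{\alpha m}{2}.
\]
Since the hypotheses give $2\beta\le\alpha/3$, the target value $2\beta m$ is at most $\tfrac{2}{3}$ of this expectation, so a Chernoff bound for the hypergeometric distribution yields a failure probability of $\exp(-\Omega_\alpha(m))$. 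Analogously, $|X^*|$ is hypergeometric with expectation $|V^*|(1+\beta)m/n\ge \alpha m/2$, so $|X^*|\ge 2\beta m$ holds with the same exponentially small failure probability. Taking a union bound over the at most $2n+2$ events (two per vertex for the neighborhood condition, two for the size condition) and using $m\gg\log n$, the total failure probability is $o(1)$; in particular, some valid choice of $X$ exists.

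The only mild subtlety is that once we fix $V^+,V^-$, the two parts of the partition of $X$ are not chosen by us but are determined by the random sample, so I must verify both $|X^+|\ge 2\beta m$ and $|X^-|\ge 2\beta m$. This is precisely the second Chernoff estimate above, and it works uniformly because Fact~\ref{prop::manygoodout} ensures both $|V^+|$ and $|V^-|$ are at least $\alpha n/2$; note also that $|X^+|+|X^-|=(1+\beta)m$ automatically. There is no real obstacle here beyond checking that $2\beta m$ sits comfortably below the expectations, which is exactly where the assumption $2\beta\le\alpha/3$ is used, and that $(1+\beta)m\le n$, which follows from $m\le 0.9n$ and $\beta\le 1/18$. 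Hence the desired $(\alpha,\beta,m)$-reservoir exists.
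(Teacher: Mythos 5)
Your proof is correct and follows essentially the same route as the paper: apply Fact~\ref{prop::manygoodout} to get the partition $V^+\cup V^-$, take a uniformly random $X$ of size $(1+\beta)m$ with $X^\pm:=X\cap V^\pm$, and use Chernoff bounds for the hypergeometric distribution together with a union bound over the $O(n)$ relevant events, using $m\gg\log n$. The paper's argument is exactly this, so there is nothing to add.
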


\begin{proof}
By Fact~\ref{prop::manygoodout}, there exists a partition $V^+, V^-$ of $V(D)$ such that 
for each
$* \in \{+,-\}$ we have that
$|V^*| \geq \alpha n/2$ and 
$d^*(v) \geq \alpha n/2$ for every $v \in V^*$.

Let $X $ be a randomly selected subset of $V(D)$ of size $(1+\beta)m$. 
Set $X^+ := V^+\cap X$ and $ X^-:=  V^-\cap X$. 
Then by the Chernoff bound for the hypergeometric distribution, with positive probability the following hold: for every $v \in V(D)$ and for each $* \in \{+,-\}$, $d^*(v) \geq \alpha n/2$ implies $|N^*(v) \cap X| \geq \alpha m / 3 \geq 2 \beta m$;  $|X^+ |, |X^-| \geq 2 \beta m$.
 Thus, $X$ is an $(\alpha,\beta,m)$-reservoir, as desired.
\end{proof}

\begin{lemma}\label{lem::total:link}
Let $\alpha, \beta, \ep >0$ and $k, m, n \in \mathbb N$ so that  $\beta m/6 \geq 2 \ep n$ and $4 \leq k \leq \frac{3}{2} \beta m$.
Let $P$ be an oriented path on $k$ vertices. 
Let $D$ be an $n$-vertex $\ep n$-bipseudorandom digraph with $\delta(D) \geq 2\alpha n$ such that $D$ has an $(\alpha,\beta,m)$-reservoir $X$. For every distinct $v,v' \in V(D) \setminus X$ such that $(v,v')$ is $(\alpha/2)$-compatible with $P$, and for every $U \subseteq X$ with $|U| + k \leq \frac{3}{2} \beta m$, there exists a copy of $P$ in $D[(X \setminus U) \cup \{v,v'\}]$ with startpoint $v$ and endpoint $v'$.
\end{lemma}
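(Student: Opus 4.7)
The strategy is to mirror Lemma \ref{lem::connecting2}, but because the reservoir property here is one-sided (a vertex $w \in X^+$ may have very few neighbors in $N^-(w) \cap X$), I cannot apply Proposition \ref{prop::connecting} at interior vertices whose prescribed orientation in $P$ happens to be the ``wrong'' direction for $w$. To circumvent this I route the entire interior of the path through a single long bidirected path inside $Y := X \setminus U$, where both orientations of every edge are automatically available, and I only invoke the $(\alpha/2)$-compatibility hypothesis at the endpoints $v$ and $v'$.

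Concretely, write $P=(u_1,\dots,u_k)$ and set
\[ B_v := N^{\sigma(u_1 u_2)}(v) \cap Y, \qquad B_{v'} := N^{\sigma(u_k u_{k-1})}(v') \cap Y. \]
By $(\alpha/2)$-compatibility and the reservoir property, $|B_v|, |B_{v'}| \geq 2\beta m - |U| \geq \beta m/2 + k$. Pick disjoint $B_v^* \subseteq B_v$ and $B_{v'}^* \subseteq B_{v'}$ of size exactly $\eps n$ each, set $Z := Y \setminus (B_v^* \cup B_{v'}^*)$, and apply Lemma \ref{lem::longpath} to $D[Z]$ to obtain a bidirected path $Q=(q_1,\dots,q_L)$ of length $L \geq |Z| - 2\eps n \geq |Y| - 4\eps n$. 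The plan is then to build the required copy of $P$ in the form
\[ (v,\; b_v,\; q_a,\; q_{a+1},\;\dots,\;q_{a+k-5},\; b_{v'},\; v') \]
for some $b_v \in B_v^*$, $b_{v'} \in B_{v'}^*$, and some index $a$: the edge $vb_v$ has orientation $\sigma(u_1u_2)$ by the choice of $B_v$, the edge $b_{v'}v'$ has orientation $\sigma(u_ku_{k-1})$ by the choice of $B_{v'}$, and every other edge of the path is bidirected (the internal edges of $Q$ and the two ``linking'' edges $b_vq_a$ and $q_{a+k-5}b_{v'}$), so any orientations prescribed by $P$ along the interior are automatically realisable.

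The main technical step is proving the existence of such an index $a$. Bipseudorandomness applied to $B_v^*$ (which has size exactly $\eps n$) forces all but fewer than $\eps n$ vertices outside $B_v^*$ to have a bidirected edge into $B_v^*$, for otherwise the ``missing'' vertices would contain an $\eps n$-set disjoint from $B_v^*$ with no bidirected edges to $B_v^*$; an analogous statement holds for $B_{v'}^*$. Hence among the $L-k+5$ eligible starting indices $a$, fewer than $\eps n$ fail the first linking condition and fewer than $\eps n$ fail the second, while the hypotheses $\beta m/6 \geq 2\eps n$ and $|U|+k \leq \tfrac{3}{2}\beta m$ combine to give $L-k+5 > 2\eps n$, so a valid $a$ exists. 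The degenerate cases $k=4$ (for which the segment $q_a,\dots,q_{a+k-5}$ is empty and one just needs a bidirected edge between $B_v^*$ and $B_{v'}^*$) and $k=5$ (for which one needs a single vertex in $Z$ that is bidirected-adjacent to each of $B_v^*$ and $B_{v'}^*$) are handled directly by the same bipseudorandomness reasoning.
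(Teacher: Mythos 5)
Your argument is correct, and it takes a genuinely different route from the paper's. The paper constructs the copy of $P$ two vertices at a time, at each stage choosing the new ``odd-indexed'' vertex $v_{2i+1}$ from the appropriate half $X^{\sigma(u_{2i+1}u_{2i+2})}$ of the reservoir partition so that its relevant semi-degree is large enough to continue, with special parity handling at the final step. You instead lay down a single long bidirected path inside $Z := (X\setminus U)\setminus(B_v^*\cup B_{v'}^*)$, then slide a window of length $k-4$ along it and use bipseudorandomness to find a starting index whose two boundary vertices each have a bidirected edge into $B_v^*$ and $B_{v'}^*$ respectively; only the two extreme edges of the image of $P$ (incident to $v$ and $v'$) have a forced orientation, and everything else is bidirected. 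This is essentially the strategy of Lemma~\ref{lem::connecting2} transplanted into the total-degree setting, and it buys a real simplification: you never touch the reservoir partition $X^+\cup X^-$ (which the paper's proof relies on at every even step), you avoid the inductive bookkeeping of where the ``current endpoint'' sits, and there is no case split on the parity of $k$ beyond the trivial $k\in\{4,5\}$ degeneracies you already handle.

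One small point you should make explicit. To conclude $L-k+5>2\eps n$ from
\[
L-k+5 \;\ge\; (1+\beta)m - (|U|+k) - 4\eps n + 5 \;\ge\; m\bigl(1-\tfrac{\beta}{2}\bigr) - 4\eps n + 5,
\]
you need a lower bound on $m(1-\beta/2)$, which the two hypotheses you cite ($\beta m/6\ge 2\eps n$ and $|U|+k\le \tfrac32\beta m$) do not by themselves supply --- for $\beta$ close to $2$ the quantity $m(1-\beta/2)$ collapses. The missing ingredient is hidden in Definition~\ref{def::total:reservoir}: since $X^+$ and $X^-$ partition $X$ and each has size at least $2\beta m$, one gets $(1+\beta)m=|X|\ge 4\beta m$, hence $\beta\le 1/3$. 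Combined with $\beta m\ge 12\eps n$, this forces $m\ge 36\eps n$ and so $m(1-\beta/2)\ge \tfrac{5}{6}m\ge 30\eps n$, which is comfortably more than $6\eps n$. With that line added, the computation closes and your proof is complete.
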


\begin{proof}
Let $X^+$, $X^-$ be the partition of $X$ as given in Definition~\ref{def::total:reservoir}. Let $P = (u_1, \dots, u_k)$. Fix an arbitrary $U \subseteq X$ with $|U| + k \leq \frac{3}{2} \beta m$, and let $v_1, v_k \in V(D) \setminus X$ be such that $(v_1, v_k)$ is $(\alpha/2)$-compatible with $P$. We will construct a copy $Q = (v_1, \dots, v_k)$ of $P$ in $D[(X \setminus U) \cup \{v_1, v_k\}]$ in stages, in all but the final step adding  two vertices at a time.

For some $i \leq (k-2)/2$, assume that there is a copy $Q^{\leq 2i-1} = (v_1, \dots, v_{2i-1})$ of $(u_1, \dots, u_{2i-1})$ in $D[(X\setminus U) \cup \{v_1\}]$ such that $d^*(v_{2i-1}) \geq \alpha n/2$, where $* = \sigma(u_{2i-1} u_{2i})$. Note that $Q^{\leq 1}: = (v_1)$ satisfies this for $i=1$. Let $B_1 := N^*(v_{2i-1}) \cap X$ for $* = \sigma(u_{2i-1} u_{2i})$ and $B_2 := X^*$ for $* = \sigma(u_{2i+1} u_{2i+2})$. Since $|B_1|, |B_2| \geq 2\beta m$ and $|U \cup V(Q^{\leq 2i-1})| \leq |U| + k \leq \frac{3}{2} \beta m$, there exist disjoint subsets $B_i' \subseteq B_i$ of size at least $\beta m/4 \geq \ep n$ disjoint from $U$ and $V(Q^{\leq 2i-1})$. Since $D$ is $\ep n$-bipseudorandom, there exists $v_{2i} \in B_1'$ and $v_{2i+1} \in B_2'$ such that $(v_{2i-1}, v_{2i}, v_{2i+1})$ is a copy of $(u_{2i-1}, u_{2i}, u_{2i+1})$. We thus obtain $Q^{\leq 2i+1} := Q^{\leq 2i-1} \circ (v_{2i-1}, v_{2i}, v_{2i+1})$ as a copy of $(u_1, \dots, u_{2i+1})$ in $D[(X \setminus U) \cup \{v_1\}]$ with $d^*(v_{2i+1}) \geq \alpha n/2$ for $* = \sigma(u_{2i+1} u_{2i+2})$.

If $k$ is even, then we slightly modify the last step, after constructing  $Q^{\leq k-3}$. Similarly as before, we can find a bidirected edge $\overleftrightarrow{v_{k-2} v_{k-1}}$ between $N^*(v_{k-3}) \cap X$ with $* = \sigma(u_{k-3} u_{k-2})$ and $N^*(v_k) \cap X$ with $* = \sigma(u_k u_{k-1})$ disjoint from $U \cup V(Q^{\leq k-3})$. Thus $Q := Q^{\leq k-3} \circ (v_{k-3}, v_{k-2}, v_{k-1}, v_k)$ contains a copy of $P$ with startpoint $v_1$, endpoint $v_k$, and all internal vertices in $X \setminus U$.

If $k$ is odd, then we construct $Q^{\leq k-4}$ and use Lemma~\ref{lem::connecting} in place of the pseudorandom property. Let $B_1 := N^*(v_{k-4}) \cap X$ for $* = \sigma(u_{k-4} u_{k-3})$, $B_2: = X$, and $B_3: = N^*(v_k) \cap X$ with $* = \sigma(u_k u_{k-1})$. Since $|B_1|, |B_2|, |B_3| \geq 2\beta m$ and $|U \cup V(Q^{\leq k-4})| \leq \frac{3}{2} \beta m$, there exists disjoint subsets $B_i' \subseteq B_i$ of size at least $\beta m/6 \geq 2\ep n$ disjoint from $U$ and $V(Q^{\leq k-4})$. By Lemma~\ref{lem::connecting}, there exists a bidirected path $(v_{k-3}, v_{k-2}, v_{k-1})$ with $v_{k-3} \in B_1$, $v_{k-2} \in B_2$, and $v_{k-1} \in B_3$. Thus $Q := Q^{\leq k-4} \circ (v_{k-4}, v_{k-3}, v_{k-2}, v_{k-1}, v_k)$ contains a copy of $P$ with startpoint $v_1$, endpoint $v_k$, and all internal vertices in $X \setminus U$.
\end{proof}

\begin{lemma}\label{lem::total:chain}
Let $\alpha, \beta , \ep >0$ and  $m, n \in \mathbb N$ so that $\beta m/6 \geq 2 \ep n$. Let $D$ be an $n$-vertex $\ep n$-bipseudorandom digraph with $\delta(D) \geq 2\alpha n$, and suppose that $D$ has an $(\alpha,\beta,m)$-reservoir $X$. Let $R \subseteq V(D) \setminus X$ so that $|R|\geq 2$, and let $v, v' \in R$ be distinct. Let $P$ be an oriented path  on $|R| + \beta m$ vertices containing at least $4|R|-6$ swap vertices. If $(v,v')$ is $(\alpha/2)$-compatible with $P$, then there exists a copy of $P$ in $D[R \cup X]$ with startpoint $v$ and endpoint $v'$ that covers $R$.
\end{lemma}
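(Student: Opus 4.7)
The plan is to mimic the structure of Lemma~\ref{lem::connecting2}, but replace the bidirected-path backbone with a chain of short oriented subpaths produced by Lemma~\ref{lem::total:link}. The difficulty is that vertices of $R$ need not have large in- \emph{and} out-degree, so we cannot arbitrarily insert them in $P$. The key new idea is to place each internal vertex of $R$ at a swap vertex of $P$ of matching type, so that $(\alpha/2)$-compatibility of each resulting subpath is automatic.

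First, for each $r \in R \setminus \{v, v'\}$, assign a type $*_r \in \{+,-\}$ with $d^{*_r}(r) \geq \alpha n/2$; this is possible because $\delta(D) \geq 2\alpha n$. Let $a$ and $b$ denote the number of type-$+$ and type-$-$ vertices in $R \setminus \{v, v'\}$. Using that $P$ has at least $4|R|-6$ swap vertices which alternate between outdegree-$2$ and indegree-$2$ along $P$, pick positions $t_2 < t_3 < \cdots < t_{|R|-1}$ in $P$ such that exactly $a$ of the selected vertices are outdegree-$2$ swap vertices, $b$ are indegree-$2$ swap vertices, and consecutive selected positions (with $t_1 := 1$ and $t_{|R|} := |V(P)|$) differ by at least $3$. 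Order $R \setminus \{v, v'\}$ as $r_2, \dots, r_{|R|-1}$ so that $*_{r_i}$ matches the type of the swap vertex at $t_i$, and set $r_1 := v$, $r_{|R|} := v'$.

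Let $P_i$ be the subpath $(u_{t_i}, u_{t_i+1}, \dots, u_{t_{i+1}})$ of $P$. By construction each $P_i$ has at least $4$ vertices. Moreover, $(r_i, r_{i+1})$ is $(\alpha/2)$-compatible with $P_i$: when $r_i \in \{v, v'\}$ this follows from the compatibility hypothesis of $(v, v')$ with $P$, and otherwise from the fact that $r_i$ sits at a swap vertex of $P$ of matching type (so both edges of $P$ incident to $u_{t_i}$ have the direction that matches $*_{r_i}$). Now iterate Lemma~\ref{lem::total:link}: setting $U_1 := \emptyset$, for each $i = 1, \dots, |R|-1$ produce a copy $Q_i$ of $P_i$ in $D[(X \setminus U_i) \cup \{r_i, r_{i+1}\}]$ from $r_i$ to $r_{i+1}$, then set $U_{i+1} := U_i \cup (V(Q_i) \cap X)$. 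The hypothesis $|U_i| + |V(P_i)| \leq \frac{3}{2}\beta m$ of Lemma~\ref{lem::total:link} follows because the total number of internal vertices across the $Q_i$ equals $|V(P)| - |R| = \beta m$, so $|U_i| + |V(P_i)| \leq \beta m + 2 \leq \frac{3}{2}\beta m$. Concatenating $Q_1 \circ \cdots \circ Q_{|R|-1}$ yields the required copy of $P$ in $D[R \cup X]$ with startpoint $v$, endpoint $v'$, and covering $R$.

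The main obstacle is the combinatorial step of selecting the positions $t_i$ with the prescribed multiset of types and with spacing at least $3$. The factor $4$ in the swap-vertex count --- $4|R|-6$ swap vertices compared to only $|R|-2$ positions to choose --- provides ample slack: since swap vertices alternate in type, both types appear at least $2|R|-4 \geq |R|-2$ times, and a greedy selection processing the swap vertices in order along $P$ (picking one of the required type whenever its position is at distance $\geq 3$ from the previous selection) succeeds using only a constant fraction of the available swap vertices.
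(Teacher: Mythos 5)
Your proof follows essentially the same approach as the paper's. Both arguments assign to each vertex of $R\setminus\{v,v'\}$ a direction $*$ with $d^*\geq\alpha n$ (or $\alpha n/2$), choose $|R|-2$ swap vertices of $P$ whose ``types'' (outdegree-$2$ vs.\ indegree-$2$) match these directions and which are pairwise at distance $\geq 3$ (and at distance $\geq 3$ from positions $1$ and $k$), and then stitch the resulting subpaths $P_j$ together via repeated applications of Lemma~\ref{lem::total:link}, verifying the bound $|U_i|+|V(P_i)|\leq\beta m+2\leq\tfrac{3}{2}\beta m$ exactly as you do. The only presentational difference is that the paper fixes a labeling $v_1,\dots,v_\ell$ of $R$ and then chooses swap vertices compatible with it, whereas you first count types $a,b$, choose positions with that type profile, and reorder $R$ post hoc; these are equivalent.

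One small point: the greedy you sketch for the selection step is plausible but is not quite a proof as stated (it does not, e.g., enforce distance $\geq 3$ from position $k$, and the claim ``succeeds using only a constant fraction'' needs an argument). A clean way to carry it out, in line with the slack you identify: discard $s_1$ and $s_{4\ell-6}$ from the first $4\ell-6$ swap vertices $s_1<\cdots<s_{4\ell-6}$, partition the remaining $4(\ell-2)$ into consecutive quadruples $\{s_{4j-2},s_{4j-1},s_{4j},s_{4j+1}\}$, and from the $j$th quadruple take one of the two middle vertices $s_{4j-1},s_{4j}$ (which have opposite types, so you can realize any type profile $(a,b)$). Consecutive picks then differ by at least $3$ since two swap vertices lie strictly between them, the first pick is $s_3\geq 4$, and the last is at most $s_{4\ell-8}\leq s_{4\ell-6}-2\leq k-3$. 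Also note that each type actually appears at least $2|R|-3$ times (not $2|R|-4$) among $4|R|-6$ alternating swap vertices, though this is immaterial. Overall the proposal is correct in its essentials and matches the paper's argument.
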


Note that if $P$ has $|R| + \beta m$ vertices and at least $4|R|-6$ swap vertices, then $|R| \leq \frac{1}{3} (\beta m + 4)$. This implies that $P$ has less than $\frac{3}{2}\beta m$ vertices, which allows us to use Lemma~\ref{lem::total:link}. 

\begin{proof}
Label $R$ as $\{v_1, \dots, v_\ell\}$, with $v =: v_1$ and $v' =: v_\ell$. Set
 $k := \beta m + \ell$ and write $P = (u_1, \dots, u_k)$. So $P$ contains at least $4\ell-6$ swap vertices and  $(v_1, v_\ell)$ is $(\alpha/2)$-compatible with $P$.

Choose $\ell-2$ swap vertices of $P$, $u_{i_2}, \dots, u_{i_{\ell-1}}$, such that
\begin{itemize}
    \item $i_{j+1} - i_j \geq 3$ for $j \in [\ell-1]$, where $i_1 := 1$ and $i_{\ell}: = k$,
    \item $d^*(v_j) \geq \alpha n$ with $* = \sigma(u_{i_j} u_{i_j+1})$, for every $2 \leq j \leq \ell-1$.
\end{itemize}
This is possible because $P$ has at least $4\ell-6$ swap vertices, and they alternate having in- or outdegree $2$. Let $P_j := (u_{i_j}, \dots, u_{i_{j+1}})$ for $j \in [\ell-1]$, and observe that $(v_j, v_{j+1})$ is $(\alpha/2)$-compatible with $P_j$. Since the number of vertices used in total is at most $|P| \leq \frac{3}{2} \beta m$, by Lemma~\ref{lem::total:link}, we can find pairwise internally disjoint copies $Q_j$ of $P_j$ in $D[X \cup \{v_j, v_{j+1}\}]$ with startpoint $v_j$ and endpoint $v_{j+1}$. Concatenating the $Q_j$ as $Q := Q_1 \circ \cdots \circ Q_{\ell-1}$, we obtain a copy of $P$ in $D[R \cup X]$ with startpoint $v_1$ and endpoint $v_{\ell}$ covering $R$; see Figure~\ref{fig::total:chain}.
\end{proof}

\begin{figure}[h!]
\centering
\begin{tikzpicture}[thick,scale=1]

\node[draw=none, fill=white] at (0.7,2.25) {$Q_1$}; 
\node[draw=none, fill=white] at (4,2.5) {$Q_2$}; 
 
\begin{scope}[xshift = -1cm, yshift = 0cm]
\node[draw=none, fill=white] at (12,2.5) {$X^-$};
\foreach \y in {1,4} {
    \draw (\y,2.25) node [] {};}
\draw[rounded corners=15] (-.5,1.5) rectangle (11.5, 3);
\end{scope} 

\begin{scope}[xshift = -1cm, yshift = 1.5cm]
\node[draw=none, fill=white] at (12,2.5) {$X^+$};
\foreach \y in {2,3,5,6} {
    \draw (\y,2.25) node [] {};}
\draw[rounded corners=15] (-.5,1.5) rectangle (11.5, 3);
\end{scope} 

\begin{scope}[xshift = -1cm, yshift = -2cm]
\node[draw=none, fill=white] at (12,2.5) {$R$};

\draw (0.5,2.25) node [label=below:{$v_{1}$}] {};
\draw (2.5,2.25) node [label=below:{$v_{2}$}] {};
\draw (6.5,2.25) node [label=below:{$v_{3}$}] {};
\node[draw=none, fill=white] at (8.5,2.25) {$\ldots$};
\draw (10.5,2.25) node [label=below:{$v_{\ell}$}] {};
    
\draw[rounded corners=15] (-.5,1.5) rectangle (11.5, 3);
\end{scope}

\begin{scope}[yshift = .25cm]
\draw[middlearrow={latex}, blue] (-0.5,0) -- (0,2);
\draw[thin, double] (0,2) -- (1,3.5);
\draw[middlearrow={latex}, blue] (1,3.5) -- (1.5,0);
\draw[middlearrow={latex reversed}, blue] (1.5,0) -- (2,3.5);
\draw[thin, double] (2,3.5) -- (3,2);
\draw[middlearrow={latex reversed}, blue] (3,2) -- (4,3.5);
\draw[thin, double] (4,3.5) -- (5,3.5);
\draw[middlearrow={latex}, blue] (5,3.5) -- (5.5,0);
\draw[blue] (5.5,0) -- (6,1);
\draw[blue] (9.5,0) -- (9,1);
\end{scope}

\end{tikzpicture}
\caption{
A copy of $P$ in $D[R\cup X]$ with startpoint $v_1$ and endpoint $v_\ell$ that covers all the vertices in $R$, as found in Lemma~\ref{lem::total:chain}. The path between $v_i$ and $v_{i+1}$ is found by Lemma~\ref{lem::total:link}. The double edges indicate that both orientations are present, as found by applying the bipseudorandom property.}
\label{fig::total:chain}
\end{figure}
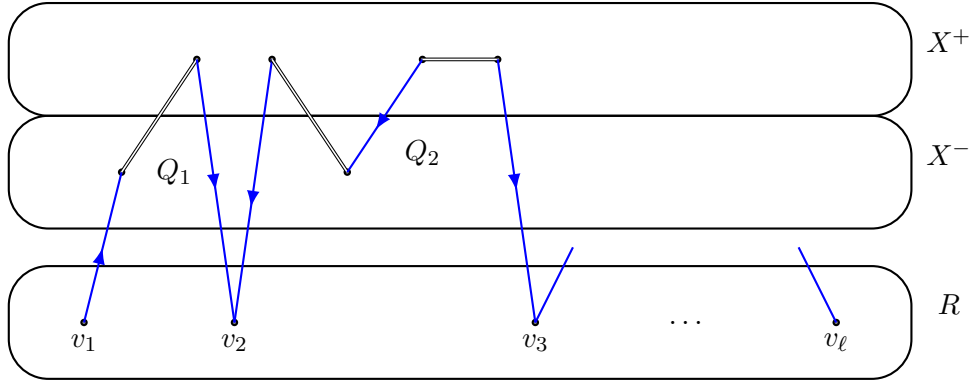

Before proving the main absorbing lemma, we need  a lemma which allows us to construct long paths with endpoints that are compatible with a given short path. This is useful in the construction of the global absorber and in the application of the global absorber in Section~\ref{sec::totalproof}.

\begin{lemma}\label{lem::total:longpath}
Let $n \in \mathbb N$ and $0<\ep, \alpha <1/3$ such that
${1}/{n} \leq \ep \leq \alpha/32$. Let $D$ be an $n$-vertex $\ep n$-bipseudorandom digraph with $\delta(D) \geq 2\alpha n$, and let $U \subseteq V(D)$ with $|U| \leq \alpha n/4$. For every $2 \leq k \leq (1-8\ep)n - |U|$ and for every $(*_1, *_2) \in \{+,-\}^2$, there exists a bidirected path on $k$ vertices in $D \setminus U$ with startpoint $v$ and endpoint $v'$ satisfying $d^{*_1}_D(v), d^{*_2}_D(v') \geq \alpha n/2$.
\end{lemma}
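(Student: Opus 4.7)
The plan is to locate the endpoints among two large "reservoir" sets supplied by Fact~\ref{prop::manygoodout}, use Lemma~\ref{lem::longpath} to build a long bidirected "backbone" in the complement, and then stitch the endpoints onto a suitable sub-path of the backbone using the $\epsilon n$-bipseudorandomness. Concretely, I would apply Fact~\ref{prop::manygoodout} to partition $V(D)=V^+\cup V^-$ with $|V^*|\ge\alpha n/2$ and $d^*(x)\ge\alpha n/2$ for every $x\in V^*$. Setting $V_i:=V^{*_i}\setminus U$, the bound $|U|\le\alpha n/4$ gives $|V_i|\ge\alpha n/4\ge 8\epsilon n$. Pick disjoint $B\subseteq V_1$ and $B'\subseteq V_2$ of size $2\epsilon n$ each (splitting $V_1=V_2$ in half when $*_1=*_2$). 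Every $v\in B$ then satisfies $d^{*_1}(v)\ge\alpha n/2$ and every $v'\in B'$ satisfies $d^{*_2}(v')\ge\alpha n/2$, so it suffices to produce a bidirected path of exactly $k$ vertices in $D\setminus U$ with startpoint in $B$ and endpoint in $B'$.

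For $k=2$, bipseudorandomness applied directly to $B$ and $B'$ (disjoint, each of size $\ge\epsilon n$) yields such an edge immediately. For $k\ge 3$ I would set $W:=V(D)\setminus(U\cup B\cup B')$, observe that $D[W]$ inherits $\epsilon n$-bipseudorandomness, and invoke Lemma~\ref{lem::longpath} to obtain a bidirected path $P_0=(a_1,\dots,a_m)$ in $D[W]$ with $m\ge |W|-2\epsilon n = n-|U|-6\epsilon n$. The hypothesis $k\le (1-8\epsilon)n-|U|$ ensures the anchor set $R:=[1,m-k+3]$ has $|R|\ge 2\epsilon n+3$, so many contiguous length-$(k-2)$ sub-paths $(a_r,\dots,a_{r+k-3})$ of $P_0$ are available.

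The crux is that some anchor $r^*$ allows $B$ to attach at the front and $B'$ at the back. Call $r\in R$ \emph{front-good} if $a_r$ has a bidirected neighbour in $B$, and \emph{back-good} if $a_{r+k-3}$ has a bidirected neighbour in $B'$. If $\epsilon n$ anchors failed to be front-good, the corresponding $\epsilon n$ distinct vertices of $V(P_0)$ would form an $\epsilon n$-subset of $V(D)$ disjoint from $B$ with no bidirected edges to $B$, contradicting $\epsilon n$-bipseudorandomness applied to this subset and any $\epsilon n$-subset of $B$. Hence at least $|R|-\epsilon n+1$ anchors are front-good, and analogously for back-good; inclusion-exclusion in $R$ gives at least $|R|-2\epsilon n+2\ge 5$ anchors $r^*$ satisfying both conditions. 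Taking $v\in B$ with $\overleftrightarrow{v\,a_{r^*}}\in D$ and $v'\in B'$ with $\overleftrightarrow{a_{r^*+k-3}\,v'}\in D$, the path $(v,a_{r^*},a_{r^*+1},\dots,a_{r^*+k-3},v')$ is a bidirected path on exactly $k$ vertices in $D\setminus U$ with the required endpoint degrees.

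The main obstacle is that bipseudorandomness is inherently a property of sets and offers no per-vertex control on bidirected degrees; we cannot simply prescribe which individual vertex of $P_0$ must be bidirected-adjacent to $B$ or $B'$. The slack $8\epsilon n$ in the hypothesis on $k$ is precisely what makes this approach work, as it produces $\ge 2\epsilon n+3$ candidate anchors, enough to absorb the at most $\epsilon n-1$ bad anchors on each side via the inclusion-exclusion above.
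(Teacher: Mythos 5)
Your proof is correct, but it takes a genuinely different route from the paper's. The paper applies Fact~\ref{prop::manygoodout} to $V(D)\setminus U$, builds two long bidirected paths $Q^+$ in $D[V^+]$ and $Q^-$ in $D[V^-]$ via Lemma~\ref{lem::longpath}, and then splices them: when $*_1\neq *_2$ it joins $Q^-$ to $Q^+$ by a single bidirected edge and truncates; when $*_1= *_2$ it either truncates $Q^+$ directly or, if $k>|V(Q^+)|$, routes a detour through a truncated piece $Q^-_1$ of $Q^-$ using two bidirected connecting edges so that both endpoints land back in $V^+$. You instead set aside small endpoint pools $B\subseteq V^{*_1}\setminus U$ and $B'\subseteq V^{*_2}\setminus U$ of size $2\epsilon n$, build a single backbone $P_0$ in their complement via Lemma~\ref{lem::longpath}, and locate a favourable anchor $r^*$ by a pigeonhole argument: if $\epsilon n$ candidate anchors $a_r$ all lacked a bidirected neighbour in $B$, those vertices would form an $\epsilon n$-set with no bidirected edge into (an $\epsilon n$-subset of) $B$, contradicting bipseudorandomness, and similarly for $B'$; the $8\epsilon n$ slack in the bound on $k$ gives $|R|\ge 2\epsilon n+3$ anchors, enough to survive both losses. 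The main payoff of your version is that it handles $*_1= *_2$ and $*_1\neq *_2$ uniformly, avoiding the paper's case split and detour construction; the paper's version is more directly constructive, building the path explicitly out of $V^+$- and $V^-$-segments rather than arguing nonconstructively that a good anchor exists. Both proofs are sound.
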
 

\begin{proof}
Fix $k \leq (1-8\ep)n - |U|$, $*_1$, and $*_2$. By Fact~\ref{prop::manygoodout}, and as $|U| \leq \alpha n/4$, we can partition $V(D) \setminus U$ as $V^+ \cup V^-$, where 
for each $* \in \{+,-\}$ we have that
$|V^*| \geq \alpha n/4$, and  $d^*_D(v) \geq \alpha n/2$ for every  $v \in V^*$. Lemma~\ref{lem::longpath} gives a bidirected path $Q^*$ in $D[V^*]$ on at least $|V^*| - 2\ep n$ vertices for each $* \in \{+,-\}$.

\textbf{Case 1:} $*_1 \neq *_2$. We take the last $\ep n$ vertices of $Q^-$ and the first $\ep n$ vertices of $Q^+$ and find a bidirected edge between them, which exists since $D$ is $\ep n$-bipseudorandom. This gives a bidirected path $Q$ on at least $|V(Q^-)| + |V(Q^+)| - 2 \ep n \geq n - |U| -6\ep n$ vertices. Truncating $Q$ at both ends, we obtain a bidirected path on $k$ vertices with startpoint in $V^{*_1}$ and endpoint in $V^{*_2}$.

\textbf{Case 2:} $*_1 = *_2$. Without loss of generality, assume $*_1 = *_2 = +$. If $k \leq |V(Q^+)|$, simply truncate $Q^+$ to $k$ vertices to obtain the desired path. If $k > |V(Q^+)|$, then truncate $Q^-$ to $Q^-_1$ on $k-|V(Q^+)|+ 4\ep n \geq 2\ep n$ vertices, which is possible because
\[ k - |V(Q^+)| + 4\ep n \leq (1-8\ep)n - |U| - (|V^+|-2\ep n) + 4\ep n = |V^-| - 2 \ep n \leq |V(Q^-)| .\]
Between the first $\ep n$ vertices of $Q^-_1$ and the first $\ep n$ vertices of $Q^+$ we find a bidirected edge, as well as between the last $\ep n$ vertices of $Q^-_1$ and the `second' $\ep n$ vertices of $Q^+$. This yields a bidirected path $Q$ on at least $k$ vertices and at most $k + 4\ep n$ vertices with startpoint and endpoint in $V^+$. Since
\[ |V(Q^+)| \geq |V^+| - 2\ep n \geq \alpha n/4 - 2\ep n \geq 6\ep n ,\]
we may truncate $Q$ to $k$ vertices and still have the startpoint and endpoint in $V^+$.
\end{proof}

We are now ready to prove the absorbing lemma for Theorem~\ref{thm:main}.
\begin{lemma}[Absorbing lemma]\label{lem::total:global}
For every $0<\alpha, \eta \ll 1$, and every $0<\eps \leq \alpha^2 \eta^4/10^8$, there exists an 
$n_0 \in \mathbb N$ such that for all $n \geq n_0$  the following holds.
Let $D$ be an $n$-vertex $\ep n$-bipseudorandom digraph with $\delta(D) \geq 2\alpha n$. Let $P$ be an oriented path on $\ceil{\alpha n/4}$ vertices with at least $\eta(|V(P)|-2)$ swap vertices. Then $D$ contains a $(P,\alpha/2)$-global absorber of size at most $\ceil{\alpha n/4} - 9\ep n$.
\end{lemma}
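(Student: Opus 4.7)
The plan is to follow the strategy of Lemma~\ref{lem::global_abs}, using the auxiliary bipartite graph $H_m$ from Lemma~\ref{lem::Mont-graph} to glue together $3m$ local absorbers, adapted to the total-degree setting with its new swap-vertex constraints. After fixing $\beta = \Theta(\eta\alpha)$ and $m = \Theta(\eta\alpha n)$ with sufficiently small constants, I would apply Lemma~\ref{lem::total:reservoir} to obtain an $(\alpha,\beta,m)$-reservoir $X \subseteq V(D)$ with its decomposition $X = X^+ \cup X^-$, pick disjoint sets $Y, Z \subseteq V(D)\setminus X$ with $|Y|=2m$ and $|Z|=3m$, build $H_m$ on parts $X \cup Y$ and $Z$ via Lemma~\ref{lem::Mont-graph}, label $Z = \{z_1, \dots, z_{3m}\}$, and fix $N_i \supseteq N_{H_m}(z_i)$ of size exactly $40$.

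The crucial new step is the partition of $P = (u_1, \dots, u_k)$ into an initial $4$-vertex piece, $3m$ consecutive absorber segments $P_1, \dots, P_{3m}$, and a final chain segment $P_{\mathrm{chain}}$. I would choose segment boundaries at swap vertices of $P$ so that each $P_i$ contains at least $3 \cdot 40 + 7 = 127$ swap vertices (the requirement of Lemma~\ref{lem::total:local} with $\ell=40$), and $P_{\mathrm{chain}}$, which has $|R| + \bar{\beta} m$ vertices for $\bar{\beta} := \beta - 2/m$, contains at least $4|R|-6$ swap vertices (the requirement of Lemma~\ref{lem::total:chain}). Putting boundaries on swap vertices has the secondary benefit that each link vertex $t_i$ (playing the boundary vertex $u_{a_i}$) needs to satisfy only a single in- or outdegree condition, and so can be drawn from the appropriate part $X^+$ or $X^-$ of the reservoir. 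Since $P$ has $\geq \eta(k-2)$ swap vertices and $m$ is small, such a partition exists by a greedy left-to-right assignment.

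For each $i \in [3m]$, I would iteratively apply Lemma~\ref{lem::total:local} (with parameter $\alpha/2$) to build a $P_i$-absorber $(A_i, t_{i-1}, t_i)$ for $(N_i, z_i)$, where $t_{i-1}$ is the link vertex from the previous step and $t_i \in X^+ \cup X^-$ is freshly chosen to meet the compatibility requirement; each call avoids previously-chosen vertices by adding them to the forbidden set $U$. Consecutive absorbers overlap at exactly the link vertex $t_i$. To verify the absorbing property for $R$ and $\alpha/2$-compatible $v,v' \in R$, I would use Proposition~\ref{prop::connecting} to embed the initial $4$-vertex segment as $(v, x_1, x_2, t_0)$ with $x_1, x_2 \in X$ (leveraging $v$'s compatibility to respect the first edge's orientation), apply Lemma~\ref{lem::total:chain} to $R' := (R \cup \{t_{3m}\}) \setminus \{v\}$ to realize $P_{\mathrm{chain}}$ as a path from $t_{3m}$ to $v'$ covering $R'$ and $\bar{\beta} m$ vertices of $X \setminus \{x_1, x_2\}$, then use the matching property of $H_m$ (Lemma~\ref{lem::Mont-graph}) to pair $Z$ with the remaining $m$ vertices of $X$ together with $Y$; this matching dictates which vertex of $N_i$ each local absorber $A_i$ is to absorb, and activating all the local absorbers via Definition~\ref{def::total:local} and concatenating the resulting paths yields the desired copy of $P$.

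The main obstacle is ensuring $\alpha/2$-compatibility at the many link points simultaneously. My approach is to force every boundary $u_{a_i}$ to be a swap vertex of $P$, so that each $t_i$ needs only a single-direction degree condition, which can be met from the reservoir parts $X^{\pm}$. The remaining check — that $|R| \leq \bar{\beta} m/3$ (from Lemma~\ref{lem::total:chain}), $|R| \geq 9\eps n$ (to give the required upper bound on $|A|$), and $381m + 4|R| \leq \eta(k-2)$ (the swap-vertex budget) are simultaneously satisfiable — should go through routinely for the stated range of parameters.
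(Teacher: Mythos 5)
Your swap-vertex-boundary idea is a genuine alternative to the paper's strategy: the paper instead inserts a \emph{bidirected edge} $\overleftrightarrow{w_iw_i'}$ at each boundary (with $w_i,w_i'$ drawn from a $V^+\cup V^-$ partition of $V(D)\setminus(X\cup Y\cup Z)$ supplied by Fact~\ref{prop::manygoodout}), which handles any orientation at the boundary without requiring it to sit at a swap vertex. Your version does correctly reduce the compatibility requirement at each link to a single-direction degree condition. However, there are two concrete gaps that prevent the argument from closing.

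First, your partition places $P_{\mathrm{chain}}$ at the \emph{fixed} terminal position of $P$ and requires the $3m$ absorber segments to contiguously tile everything between the initial $4$-vertex piece and $P_{\mathrm{chain}}$. This breaks when the swap vertices are unevenly distributed, which is possible even under the hypothesis that $P$ has $\geq\eta(|V(P)|-2)$ swap vertices: e.g.\ $P$ could be a long consistently-oriented stretch followed by a short alternating stretch near the middle. Then $P_{\mathrm{chain}}$, which has fixed size $|R|+\bar\beta m$ and sits at the end, may contain far fewer than the $4|R|-6$ swap vertices required by Lemma~\ref{lem::total:chain}; and a greedy left-to-right segmentation exhausts all swap vertices well before reaching $P_{\mathrm{chain}}$, leaving a ``dead'' region of $P$ that no segment covers. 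Your ``remaining check'' only verifies the \emph{global} swap-vertex budget, not its distribution. The paper addresses this with three devices absent from your plan: the position $j$ of the chain segment $P^2$ is \emph{chosen} via a counting argument (equation~\eqref{usethis}); a long filler segment $P^3$ (embedded directly via Lemma~\ref{lem::total:longpath}) soaks up the slack; and the housing region $P^1$ is cut into \emph{many} fixed-length pieces $P^1_i$, of which only the $3m$ ``good'' ones (with $\geq 127$ swap vertices) become local absorbers, while the non-good ones are embedded directly.

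Second, you draw the $3m+1$ link vertices $t_0,\dots,t_{3m}$ from the reservoir parts $X^\pm\subseteq X$. But $|X|=(1+\beta)m$ with $\beta<1$, and the matching step needs exactly $m$ vertices of $X$ left over after the chain consumes $\bar\beta m$ of them and the initial/final $4$-vertex pieces consume a few more. Using an extra $3m+1$ vertices of $X$ for the links overdraws $X$ by more than $2m$. This is precisely why the paper's boundary vertices $w_i,w_i'$ are taken from $V(D)\setminus(X\cup Y\cup Z)$.
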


\begin{proof}
Let $n$ be chosen sufficiently large so that all our calculations will hold. Let $D$ and $P$ be as in the statement of the lemma.
We first construct the global absorber and then prove that it is indeed a $(P,\alpha/2)$-global absorber. Define $\beta, m >0$ such that $\alpha/7 \leq \beta \leq \alpha /6$ and $\frac{\alpha \eta^3}{50000} n \leq m \leq \frac{\alpha \eta^3}{49999} n$, and so that $m$ and $\beta m$ are integers. 
Without loss of generality we may assume that 
$516/\eta$ is an integer. 

Let $X$ be an $(\alpha,\beta,m)$-reservoir, whose existence is  guaranteed by Lemma~\ref{lem::total:reservoir}. Let $Y, Z \subseteq V(D) \setminus X$ be disjoint sets of $2m$ and $3m$ vertices, respectively. Fix an  auxiliary graph $H_m$ isomorphic to the graph from Lemma~\ref{lem::Mont-graph} on $X \cup Y \cup Z$. For each $z \in Z$, we set $N_z := N_{H_m}(z)$.

We split $P$ into several pieces as follows. Let $P = (u_1, \dots, u_k)$, with $k: = \ceil{\alpha n/4}$. Set $r: = 9 \ep n$ and $\ell := \beta m + r - 4$. Define
\begin{itemize}
    \item $P^0 := (u_1, u_2, u_3, u_4)$,
    \item $P^1 := (u_4, \dots, u_{j+1})$,
    \item $P^2 := (u_{j+1}, \dots, u_{j+\ell})$,
    \item $P^3 := (u_{j+\ell}, \dots, u_{k-3})$,
    \item $P^4 := (u_{k-3}, u_{k-2}, u_{k-1}, u_k)$,
\end{itemize}
where $4 \leq j \leq k-4-\ell$ is chosen so that $P^2$ has at least $4r - 6$ swap vertices.
This is possible because otherwise there are at most
\begin{align}\label{usethis}
 \left\lfloor\frac{k-10}{\ell}\right\rfloor (4r-5) + (\ell -1) +8 < 12 \frac{\alpha n}{\beta m} \ep n + \beta m + 9\ep n < \eta(k-2)n 
 \end{align}
swap vertices of $P$, a contradiction.
To see where (\ref{usethis}) comes from, one should consider a partition of $V(P)\setminus \{u_1,\dots, u_5, u_{k-4},\dots , u_k\}$ into $\left\lfloor\frac{k-10}{\ell}\right\rfloor$ sets of $\ell$ consecutive vertices along $P$, and one `leftover' set of size at most $\ell -1$. One should also note that it is only the internal vertices along a path that can be swap vertices; this is why (\ref{usethis}) has a $4r-5$ term rather than a $4r-7$ term.

We claim that  for some $i=1,3$ we have that $P^i$ has at least $6192m/\eta^2$ vertices, and at least  $\eta|V(P^i)|/2$ of those vertices are swap vertices of $P^i$. This is because if $P^1$ has fewer than $6192m/\eta^2$ vertices, then $P^3$ must have at least
\[ \eta (k-2) - 10 - \ell - 6192m/\eta^2 \geq \max\left\{ \frac{\eta}{2} k, 6192m/\eta^2 \right\} \]
swap vertices; if $P^1$ has less than an $\eta/2$-proportion of its vertices being swap vertices, then $P^3$ must have at least
\[ \eta (k-2) - 10 - \ell - \frac{\eta}{2} j \geq \max\left\{\frac{\eta}{2} |V(P^3)|, 6192m/\eta^2\right\} \]
swap vertices. 
Assume that our claim held for $i=1$; the $i=3$ case follows by an 
 essentially identical argument with $P^3$ in place of $P^1$.

Our local absorbers will be `housed' in $P^1$; the segment $P^2$ will be used to absorb  $R$ from Definition~\ref{def::total:global}; $P^0$ and $P^4$ are used to ensure the copy of $P$ we find has the correct startpoint and endpoint; $P^3$ is simply used to fill up the remaining part of $P$. 

\smallskip

Let $p := 516/\eta$ (recalling that $p$ is an integer) and $s := \floor{\frac{j-8}{p}}-1$. 
For $0 \leq i \leq s$, define~$P^1_i := (u_{ip+5}, \dots, u_{(i+1)p+4})$; see Figure~\ref{fig::path_decomp}. We call $P^1_i$ \emph{good} if it contains at least $3 \cdot 40 + 7 = 127$ swap vertices of $P^1_i$; this will be enough to apply Lemma~\ref{lem::total:local} later to find a local absorber for $(N_z,z)$. 

Note that there must be at least $3m$ good $P^1_i$, since otherwise $P^1$ has at most
\begin{align}\label{usethis2}
(s+1) \cdot 128 + (3m-1) \cdot p + (j-2-(s+1)p) < 129 (s+1) + 3mp  \leq \frac{\eta}{2} |V(P^1)|
\end{align}
swap vertices,  a contradiction.
Note that the first term in (\ref{usethis2}) is 
$(s+1) \cdot 128$ as, including the startpoint and endpoint of $P^1_i$, 
$V(P^1_i)$ may contain at most $128$ swap vertices of $P^1$, and yet only contain at most $126$ swap vertices of $P^1_i$. The second term in (\ref{usethis2}) corresponds to the  $P^1_i$s in which every vertex may be 
a swap vertex of $P^1$. The third term in (\ref{usethis2}) counts all those vertices on $P^1$ that do not 
live in one of the $P^1_i$. The final inequality in 
(\ref{usethis2}) follows as
 $\frac{\eta}{4} |V(P^1)| \geq 3mp$ (as  $|V(P^1)|\geq 6192m/\eta ^2$ and $p= 516/\eta$), and as $129(s+1) \leq 129 \frac{j-2}{p} = \frac{\eta}{4} |V(P^1)|$ (by the definition of $s$ and $p$).

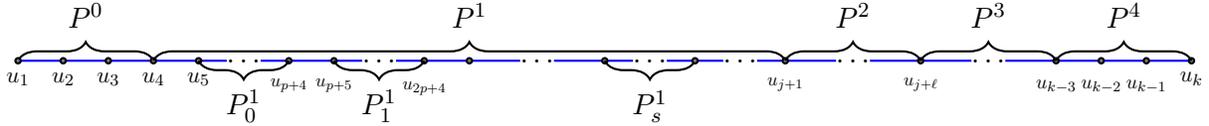
\begin{figure}[h!]
\centering
\begin{tikzpicture}[thick,scale=0.6]
\draw[blue] (1,3) -- (27,3);
 
\node[draw=none, fill=white] at (2.5,4) {$P^0$};
\draw [decoration={brace, amplitude=0.25cm}, decorate] (1,3) -- (4,3);
\foreach \y in {1,...,4} {
\draw (\y,3) node [label=below:{$\scaleto{u_{\y}}{5pt}$}] {};
}

\node[draw=none, fill=white] at (6,2) {$P_0^1$};
\draw (5,3) node [label=below:{$\scaleto{u_{5}}{5pt}$}] {};
\draw (7,3) node [label=below:{$\scaleto{u_{p+4}}{5pt}$}] {};
\node[draw=none, fill=white] at (6,3) {$\ldots$};
\draw [decoration={brace, amplitude=0.25cm}, decorate] (7,3) -- (5,3);

\node[draw=none, fill=white] at (9,2) {$P_1^1$};
\draw (8,3) node [label=below:{$\scaleto{u_{p+5}}{5pt}$}] {};
\draw (10,3) node [label=below:{$\scaleto{u_{2p+4}}{5pt}$}] {};
\node[draw=none, fill=white] at (9,3) {$\ldots$};
\draw [decoration={brace, amplitude=0.25cm}, decorate] (10,3) -- (8,3);

\draw (11,3) node [] {};
\node[draw=none, fill=white] at (12.5,3) {$\ldots$};
\draw (14,3) node [] {};
\node[draw=none, fill=white] at (15,3) {$\ldots$};
\node[draw=none, fill=white] at (15,2) {$P_s^1$};
\draw [decoration={brace, amplitude=0.25cm}, decorate] (16,3) -- (14,3);
\draw (16,3) node [] {};
\node[draw=none, fill=white] at (17,3) {$\ldots$};

\node[draw=none, fill=white] at (11,4) {$P^1$};
\draw [decoration={brace, amplitude=0.25cm}, decorate] (4,3) -- (18,3);

\node[draw=none, fill=white] at (19.5,4) {$P^2$};
\draw (18,3) node [label=below:{$\scaleto{u_{j+1}}{5pt}$}] {};
\node[draw=none, fill=white] at (19.5,3) {$\ldots$};
\draw (21,3) node [label=below:{$\scaleto{u_{j+\ell}}{5pt}$}] {};
\draw [decoration={brace, amplitude=0.25cm}, decorate] (18,3) -- (21,3);

\node[draw=none, fill=white] at (22.5,4) {$P^3$};
\node[draw=none, fill=white] at (22.5,3) {$\ldots$};
\draw (24,3) node [label=below:{$\scaleto{u_{k-3}}{5pt}$}] {};
\draw [decoration={brace, amplitude=0.25cm}, decorate] (21,3) -- (24,3);

\node[draw=none, fill=white] at (25.5,4) {$P^4$};
\draw (25,3) node [label=below:{$\scaleto{u_{k-2}}{5pt}$}] {};
\draw (26,3) node [label=below:{$\scaleto{u_{k-1}}{5pt}$}] {};
\draw (27,3) node [label=below:{$\scaleto{u_{k}}{5pt}$}] {};
\draw [decoration={brace, amplitude=0.25cm}, decorate] (24,3) -- (27,3);

\end{tikzpicture}
\caption{
The pieces~$P^i$ which compose the path~$P$ as in the proof of Lemma~\ref{lem::total:global}. $P^1$ has at least~$6192m/\eta^2$~vertices and at least an~$\eta/2$-proportion of those vertices are swap vertices, 
while~$P^2$ has at least~$4r -6$~swap vertices. We further divide $P^1$ into disjoint $P^1_i$ of equal length
which house the local absorbers.}
\label{fig::path_decomp}
\end{figure}

As $|X\cup Y\cup Z|\leq \alpha n/6$, Fact~\ref{prop::manygoodout} ensures
a partition
 $V^+ , V^-$   of $V(D) \setminus (X \cup Y \cup Z)$ such that $|V^*| \geq {\alpha}n/3$ and 
 $d^*_D(v) \geq \alpha n /2$ for every $v \in V^*$ and $* \in \{+,-\}$. Since $D$ is $\ep n$-bipseudorandom, 
 we can find pairwise disjoint bidirected edges $\overleftrightarrow{w_i w_i'}$ for $0 \leq i \leq s+1$, 
 where $w_i \in V^*$ with $* = \sigma(u_{ip+4} u_{ip+3})$ and $w_i' \in V^*$ with $* = \sigma(u_{ip+5} u_{ip+6})$. 
 In this way, $(w_i', w_{i+1})$ is $(\alpha/2)$-compatible with $P^1_i$ for $0 \leq i \leq s$. 

As there are at least $3m$ good $P^1_i$, we
can  assign to  each $z \in Z$ a distinct $i_z$ such that $P^1_{i_z}$ is good. 
We construct pairwise disjoint $P^1_{i_z}$-absorbers $(A_{i_z}, w_{i_z}', w_{i_z+1})$ for $(N_z,z)$, disjoint from $X \cup Y \cup Z$, via repeated applications of Lemma~\ref{lem::total:local}; this is possible  as $\ep \leq \frac{\alpha}{8p}$, and  because in the  process of constructing these local absorbers we use  at most $3mp\leq \alpha n/4$ vertices in total.

Let $I := \{ i : 0 \leq i \leq s, \nexists z \in Z \text{ with } i = i_z\}$. For each $i \in I$, we find a copy $Q^1_i$ of $P^1_i$ with startpoint $w_i'$ and endpoint $w_{i+1}$ such that they are pairwise disjoint and disjoint from $X \cup Y \cup Z$ and $A_{i_z}$ for all $z \in Z$. This is achieved by applying Lemma~\ref{lem::connecting} as follows. Let $B_2 := N^*_D(w_i')$ for $* = \sigma(u_{ip+5} u_{ip+6})$ and $B_{p-1} := N^*_D(w_{i+1})$ for $* = \sigma(u_{(i+1)p+4} u_{(i+1)p+3})$, and note that $|B_2|, |B_{p-1}| \geq \alpha n/2$. Let $B_2' \subseteq B_2$ and $B_{p-1}' \subseteq B_{p-1}$ be disjoint from each other and all the vertices in $X \cup Y \cup Z$, $\bigcup_{z\in Z} A_{i_z}$, and the other $Q^1_i$, of which there are at most $k = \ceil{\alpha n/4}$, so that $|B_2'|, |B_{p-1}'| \geq \alpha n / 10 \geq 2\ep n$. Let $B_3', \dots, B_{p-2}'$ be arbitrary pairwise disjoint subsets of $V(D)$ of size at least $2\ep n$, disjoint from everything from before. By Lemma~\ref{lem::connecting} there is a bidirected path through the $B_i'$; adding $w_i'$ and $w_{i+1}$ on either end gives the desired $Q^1_i$.

Finally for $P^1$, we find a bidirected edge $\overleftrightarrow{w_{s+2} w_{s+2}'}$ disjoint from $X \cup Y \cup Z$, the $Q^1_i$ for $i \in I$, and the local absorbers $A_{i_z}$, with the property that $w_{s+2} \in V^*$ with $* = \sigma(u_{j} u_{j-1})$ and $w_{s+2}' \in V^*$ with $* = \sigma(u_{j+1} u_{j+2})$. In a similar fashion as before, since $(w_{s+1}', w_{s+2})$ is $(\alpha/2)$-compatible with $(u_{(s+1)p+5}, \dots, u_j)$, we find a copy $Q^1_{s+1}$ of $(u_{(s+1)p+5}, \dots, u_j)$ with startpoint $w_{s+1}'$ and endpoint $w_{s+2}$.

Let
\[ A^1 := \{w_0\} \cup \bigcup_{z \in Z} A_{i_z} \cup \bigcup_{i\in I} V(Q^1_i) \cup V(Q^1_{s+1}) \cup \{w_{s+2}'\}, \]
then
\begin{equation}\label{eq::A1size}
|A^1| = 1 + 3m(p-2) + (s+1-3m)p + (j-(s+1)p-4) + 1 = j - 6m - 2 =|V(P^1)|-6m.
\end{equation}

\begin{claim}\label{claim::TRIGGERED}
Given any matching in $H_m$ covering $Z$, let $Z' \subseteq X \cup Y$ denote  the set of vertices matched to $Z$. Then there exists a copy of $P^1$ in $D[A^1 \cup Z \cup Z']$ with startpoint $w_0$ and endpoint $w_{s+2}'$.
\end{claim}

\begin{proof}
For each $z \in Z$, we activate the local absorber $A_{i_z}$ for $z$ and its matched vertex $z'$ in $Z'$, yielding a copy $Q^1_{i_z}$ of $P^1_{i_z}$ containing $z$ and $z'$. We concatenate the $Q^1_i$ to obtain a copy of $P^1$ as
\[ (w_0, w_0') \circ Q^1_0 \circ (w_1, w_1') \circ Q^1_1 \circ \cdots \circ Q^1_{s+1} \circ (w_{s+2}, w_{s+2}') \]
with startpoint $w_0$ and endpoint $w_{s+2}'$; see Figure~\ref{fig::claim}.
\end{proof}

\begin{figure}[h!]
\centering
\begin{tikzpicture}[thick,scale=0.8]

\node[draw=none, fill=white] at (3.5,2.5) {$Q_0^1$};
\node[draw=none, fill=white] at (7.5,2.5) {$Q_1^1$};
\node[draw=none, fill=white] at (13.5,2.5) {$Q_{s}^1$};
\node[draw=none, fill=white] at (17,2.5) {$Q_{s+1}^1$};

\draw[thin, double] (1,3) -- (2,3);
\draw[blue] (2,3) -- (5,3);
\draw[thin, double] (5,3) -- (6,3);
\draw[blue] (6,3) -- (9,3);
\draw[thin, double] (9,3) -- (10,3);
\draw[thin, double] (11,3) -- (12,3);
\draw[blue] (12,3) -- (15,3);
\draw[thin, double] (15,3) -- (16,3);
\draw[blue] (16,3) -- (18,3);
\draw[thin, double] (18,3) -- (19,3);

\draw (1,3) node [label=above:{$w_{0\phantom{+0}}\phantom{'}$}] {};
\draw (2,3) node [label=above:{$w'_{0\phantom{+0}}$}] {};
\draw (5,3) node [label=above:{$w_{1\phantom{+0}}\phantom{'}$}] {};
\draw (6,3) node [label=above:{$w'_{1\phantom{+0}}$}] {};

\draw (9,3) node [] {};
\node[draw=none, fill=white] at (10.5,3) {$\ldots$};
\draw (12,3) node [] {};

\draw (15,3) node [label=above:{$w_{s+1}\phantom{'}$}] {};
\draw (16,3) node [label=above:{$w_{s+1}'$}] {};
\draw (18,3) node [label=above:{$w_{s+2}\phantom{'}$}] {};
\draw (19,3) node [label=above:{$w_{s+2}'$}] {};

\end{tikzpicture}
\caption{ A copy of $P^1$ in $D[A^1 \cup Z \cup Z']$ as found in Claim~\ref{claim::TRIGGERED}. The $w_i w_i'$ edges are double edges, as found by applying the bipseudorandom property to $V^+$ and $V^-$.}
\label{fig::claim}
\end{figure}
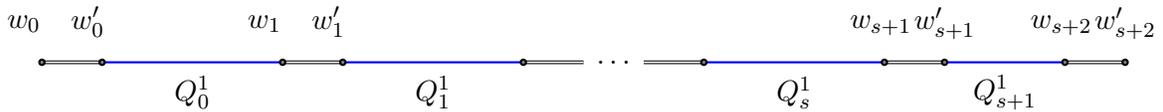

To complete the construction of the global absorber, we use Lemma~\ref{lem::total:longpath} to obtain a copy $Q^3$ of $P^3$ disjoint from $A^1 \cup X \cup Y \cup Z$ with startpoint $w \in V^*$ where $* = \sigma(u_{j+\ell} u_{j+\ell-1})$, and endpoint $w' \in V^*$ where $* = \sigma(u_{k-3} u_{k-2})$. Let
\[ A: = X \cup Y \cup Z \cup A^1 \cup V(Q^3) .\]
By~\eqref{eq::A1size}, $\ell = \beta m + r - 4$, $r = 9\ep n$, and $k = \ceil{\alpha n/4}$, we have that
\[ |A| = (6+\beta)m + (j-6m-2) + (k-2-j-\ell) = k - r = \ceil{\alpha n/4} - 9\ep n. \]

\smallskip

Now we prove that $A$ is indeed a $(P,\alpha/2)$-global absorber. Let $R \subseteq V(D) \setminus A$ with $|R| + |A| = |V(P)|$ be given, as well as $v,v' \in R$ which are $\alpha/2$-compatible with $P$. Let $R' := (R \setminus \{v,v'\}) \cup \{w_{s+2}', w\}$. By Lemma~\ref{lem::total:chain}, there exists a copy $Q^2$ of $P^2$ in $D[R' \cup X]$ covering $R'$ with startpoint $w_{s+2}'$ and endpoint $w$. By Lemma~\ref{lem::total:link}, there exists a copy $Q^0$ of $P^0$ in $D[X \cup \{v,w_0\}]$ disjoint from $V(Q^2)$ with startpoint $v$ and endpoint $w_0$; similarly, there exists a copy $Q^4$ of $P^4$ in $D[X \cup \{w', v'\}]$ disjoint from $V(Q^0)$ and $V(Q^2)$ with startpoint $w'$ and endpoint $v'$.

Let $X'$ be the set of vertices in $X$ not used in $Q^0$, $Q^2$, or $Q^4$. Thus,
\[ |X'| = |X| - (2 + |V(P_2)| - |R'| + 2) = (1+\beta)m - \ell + r - 4 = m .\]
By Lemma~\ref{lem::Mont-graph}, there exists a perfect matching between $Z$ and $X' \cup Y$. By Claim~\ref{claim::TRIGGERED}, there exists a copy $Q^1$ of $P^1$ with startpoint $w_0$ and endpoint $w_{s+2}'$ covering $X' \cup Y \cup Z \cup A^1$. Concatenate the $Q^i$ as
\[ Q := Q^0 \circ Q^1 \circ Q^2 \circ Q^3 \circ Q^4 .\]
Then $Q$ is a copy of $P$ in $D[R \cup A]$ with startpoint $v$ and endpoint $v'$. 
\end{proof}

\section{Proof of Theorem~\ref{thm:main}}\label{sec::totalproof}

\begin{proof}
To prove Theorem~\ref{thm:main} it clearly suffices to prove the case when $\eta = \alpha$ and $0< \alpha \ll 1$.
Set $\ep := \alpha^6/10^9$ and  $C := 4e/\ep^2$, and let $n \in \mathbb N$ be sufficiently large. Let $D_0$ be an $n$-vertex digraph with $\delta(D_0) \geq 2\alpha n$.

Call an oriented cycle \emph{good} if it has length between $3$ and $n$ with at most $(1-\alpha)n$ vertices of indegree $1$, and it is not a consistently oriented cycle of length $3$. 

Given any good cycle $\mathcal{C}$, we first prove that $D := D_0 \cup D^*(n,C/n)$ contains a copy of $\mathcal C$ with probability at least $1 - e^{-n}$. Note that Lemma~\ref{lem::pseudo} implies that $D^*(n,C/n)$ is $\ep n$-bipseudorandom with probability at least $1 - e^{-n}$; thus, we may assume that $D$ is an $\ep n$-bipseudorandom digraph.

When $|\mathcal{C}| = 3$, then choose an arbitrary $v \in V(D)$. Either $d^+_D(v) \geq \alpha n$ or $d^-_D(v) \geq \alpha n$. In either case, since $\ep \leq \alpha /2$, we can find an edge in the in- or out-neighborhood of $v$ using that $D$ is $\ep n$-bipseudorandom, yielding a non-consistently oriented cycle of length $3$.

When $4 \leq |\mathcal{C}| \leq (1 - 13 \ep /\alpha - 8 \ep) n$, we first fix an
$(\alpha/2, \alpha/12, \frac{12\ep}{\alpha} n)$-reservoir $X$, as given by 
Lemma~\ref{lem::total:reservoir}. In particular, $|X| \leq \frac{13\ep}{\alpha} n 
\leq \alpha n/4$, and for every $v \in V(D)$ and $* \in \{+,-\}$, if $d^*_D(v) \geq 
\alpha n/4$ then $|N^*_D(v) \cap X| \geq 2 \ep n$. Let $(u_1, u_2, u_3, u_4)$ 
be a subpath of $\mathcal{C}$. Since $|\mathcal{C}|-2 \leq (1-8\ep)n-|X|$, 
Lemma~\ref{lem::total:longpath} gives us a bidirected path $Q_\bi$ disjoint from 
$X$ on $|\mathcal{C}|-2$ vertices with startpoint $v$ and endpoint $v'$ 
satisfying $d^{*_1}_D(v), d^{*_2}_D(v') \geq \alpha n/4$ with $*_1 = \sigma(u_1
u_2)$ and $*_2 = \sigma(u_4 u_3)$. We close $Q_\bi$ into a copy of $\mathcal{C}$
by applying the bipseudorandom property to disjoint subsets of $N^{*_1}_D(v) \cap
X$ and $N^{*_2}_D(v') \cap X$.

When $|\mathcal{C}| \geq (1-13\ep/\alpha-8\ep) n$, we have at least $\alpha n - 13\ep/\alpha n - 8\ep n \geq 2\alpha n/3$ swap vertices in $\mathcal{C}$. By a standard averaging argument, there is a subpath $P$ of $\mathcal{C}$ on $\ceil{\alpha n/4}$ vertices with at least $\frac{2\alpha}{3}(|V(P)|-2)$ swap vertices. By Lemma~\ref{lem::total:global}, there exists a $(P,\alpha/2)$-global absorber $A$ of size at most $\ceil{\alpha n/4} - 9\ep n$. Let $P = (u_1, \dots, u_k)$ with $k := \ceil{\alpha n/4}$, and let $*_1 := \sigma(u_1 u_2)$ and $*_2 := \sigma(u_k u_{k-1})$. By Lemma~\ref{lem::total:longpath}, there exists a bidirected path $Q_\bi$ in $D \setminus A$ on $|\mathcal{C}| - k + 2 \leq n - |A| - 8\ep n$ vertices with startpoint $v$ and endpoint $v'$ satisfying $d^{*_1}_D(v), d^{*_2}_D(v') \geq \alpha n/2$. Let $R \subseteq V(D) \setminus (A \cup V(Q_\bi))$ be arbitrary with $|A| + |R| + 2 = k$, which exists because
\[ 0 \leq k-2-|A| = |\mathcal{C}| - |V(Q_\bi)| - |A| \leq |V(D) \setminus (A \cup V(Q_\bi))| .\]
Let $R' := R \cup \{v,v'\}$. Since $(v,v')$ is $(\alpha/2)$-compatible with $P$, $A$ is a $(P,\alpha/2)$-global absorber, and $|A| + |R'| = |V(P)|$, we have that $D[A \cup R']$ contains a copy $Q$ of $P$ with startpoint $v$ and endpoint $v'$. Joining $Q$ and $Q_\bi$ at both ends, we have a copy of $\mathcal{C}$ as desired.

Thus for every good cycle $\mathcal{C}$, we have that $D_0 \cup D^*(n,C/n)$ contains a copy of $\mathcal{C}$ with probability at least $1-e^{-n}$. Hence Lemma~\ref{lem::coupling} implies that $D_0 \cup D(n,C/n)$ contains a copy of $\mathcal{C}$ with probability at least $1-e^{-n}$. Taking a union bound over all the at most $n 2^n$ possible lengths and orientations, we have that $D_0 \cup D(n,C/n)$ contains every good oriented cycle a.a.s.

Finally, we deal with consistently oriented cycles of length $2$ and $3$. To see that $D_0 \cup D(n,C/n)$ contains a cycle of length $2$ a.a.s., we proceed as in the proof of Theorem~\ref{thm::semideg_simple}: observe that $D_0$ has at least $\alpha n^2$ directed edges, and so the probability no edge of $D(n,C/n)$ is in the opposite orientation of an edge of $D_0$ is at most
\[ (1-C/n)^{\alpha n^2} \leq e^{-C\alpha n} .\]

To see that $D_0 \cup D(n,C/n)$ contains a consistently oriented cycle of length $3$, we use the second moment method to show that a.a.s.\ there exists an edge $\forw{uv}$ of $D_0$ and a vertex $w \neq u,v$ such that $\forw{vw}, \forw{wu}$ are edges of $D(n,C/n)$. The expected number of such triangles is $\mu := e(D_0) (n-2) (C/n)^2$, while the variance is at most
\[ \mu \left( 1 + 2 \cdot 2 (n-3) (C/n) \right) \leq 5 C \mu ,\]
since such a triangle intersects $2 \cdot 2 (n-3)$ other such triangles in their random edges --- $2$ choices for which edge ($\forw{vw}$ or $\forw{wu}$) to intersect, $n-3$ choices for the other vertex $x$, and $2$ choices for whether $xw$ is the deterministic edge of the other triangle. Since $e(D_0) \geq \alpha n^2$, $\mu$ is of  order  $n$,  so by Chebyshev's inequality, $D_0 \cup D(n,C/n)$ contains a consistently oriented cycle of length $3$ a.a.s.
\end{proof}

\section{Concluding remarks}\label{sec::conc}

In this paper we have determined how many random edges one must add to a digraph with linear minimum semi-degree to a.a.s.~force all orientations of a Hamilton cycle. There has also been interest in obtaining results in the perturbed setting where the initial (di)graph is sparse.
In particular,
 Hahn-Klimroth, Maesaka, Mogge, Mohr, and Parczyk~\cite{hmmmp} proved a generalization of the graph version of Theorem~\ref{bfmthm}, where
 now $\alpha$ can be a function of $n$ that tends towards $0$ as $n$ tends to infinity.
 Let $G(n,p)$ be the \emph{binomial random graph} on the vertex set $[n]$ where every possible edge is present with probability $p$, independently of all other edges.

\begin{theorem}[Hahn-Klimroth, Maesaka, Mogge, Mohr, and Parczyk~\cite{hmmmp}]\label{hmmmp}
 Let $\alpha = \alpha(n): \mathbb N \rightarrow (0,1)$ and $C =C(\alpha) = (6+o(1))\log \frac{1}{\alpha}$. If $G_0$ is an $n$-vertex graph of minimum degree $\delta(G_0) \geq \alpha n$, then $G_0 \cup G(n,C/n)$ a.a.s.\ contains an (undirected) Hamilton cycle.
\end{theorem}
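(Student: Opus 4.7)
I would attempt this proof by adapting the absorbing-method framework of the present paper to the undirected setting where $\alpha = \alpha(n)$ may tend to zero. First, split $G(n,C/n)$ into two independent sprinklings $G_1$ and $G_2$, each with edge probability of order $\log(1/\alpha)/n$. Then use $G_0 \cup G_1$ to build a global absorber $A \subseteq [n]$ of linear size with the Montgomery property that, for every sufficiently small $R \subseteq [n] \setminus A$, the graph $(G_0 \cup G_1)[A \cup R]$ contains a Hamilton path on $A \cup R$ with prescribed endpoints. Finally, use $G_0 \cup G_2$ to construct an almost-spanning path in $[n] \setminus A$ whose endpoints match those of $A$, and close it into a Hamilton cycle using the absorber.

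The global absorber would follow Montgomery's scaffolding approach (Lemma~\ref{lem::Mont-graph}), with a constant-sized local absorber placed at each vertex $z$ on the $Z$-side of the auxiliary bipartite graph $H_m$. Each local absorber should have the flexibility to either include or skip its target vertex; this flexibility is provided by the $\alpha n$ neighbors of $z$ in $G_0$, linked across the host graph by a few $G_1$-edges. The key pseudorandom input is a short-connecting statement: between any two vertex sets of size $\Omega(\alpha n)$ in $[n]$, there is a short path in $G_0 \cup G_1$. Chernoff-type estimates show that this property holds provided $C \cdot \alpha^2 n \gg \log(1/\alpha)$, which is consistent with $C$ being only logarithmic in $1/\alpha$. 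For the long-path step I would use a P\'osa rotation-extension argument in $G_0 \cup G_2$ restricted to $[n] \setminus A$, exploiting that any small $S$ satisfies $|N(S)| \geq 2|S|+1$ in the combined graph --- the expansion coming from the $\alpha n$-degree contribution of $G_0$ and the booster edges for rotations and closing coming from $G_2$.

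The principal obstacle is obtaining the sharp logarithmic dependence $C = (6+o(1))\log(1/\alpha)$. A straightforward implementation of the ingredients above would typically give $C$ of order $1/\alpha$ or $\log(1/\alpha)/\alpha$. Reducing this to purely logarithmic in $1/\alpha$ requires a careful coupling of the deterministic edges in $G_0$ with the sparse random sprinklings, exploiting the fact that all the crucial pseudorandom events involve vertex sets of linear size in $n$, so that their failure probabilities already decay exponentially in $C \alpha^2 n$ rather than in $Cn$. Pinning down the exact leading constant $6$ would presumably require optimizing the number of absorber `types' used, and balancing the constants spent on the absorbing and path-building phases against the extremal configurations (such as unions of disjoint cliques of size $\alpha n$) that make the dependence tight.
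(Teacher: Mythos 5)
This theorem is cited from Hahn-Klimroth, Maesaka, Mogge, Mohr, and Parczyk~\cite{hmmmp}; the present paper does not contain a proof of it, only quotes it in the concluding remarks as context. So there is no in-paper proof to compare against, and the comparison must be with the general expectations the paper expresses.

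Your proposal has a genuine gap. The pseudorandom ``short-connecting'' property you lean on --- that every pair of disjoint sets of size $\Omega(\alpha n)$ sees an edge (or short path) in the sprinkled graph --- cannot be established with $C = O(\log(1/\alpha))$ by a Chernoff-plus-union-bound argument. You claim it ``holds provided $C\cdot\alpha^2 n \gg \log(1/\alpha)$,'' but the failure probability for one pair is roughly $e^{-C\alpha^2 n}$ while the number of pairs is roughly $\binom{n}{\alpha n}^2 = e^{(2+o(1))\alpha n \log(1/\alpha)}$, so the union bound requires $C\alpha^2 n \gg \alpha n \log(1/\alpha)$, i.e., $C \gg \log(1/\alpha)/\alpha$. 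This is exactly the barrier the present paper acknowledges: its own $\eps n$-bipseudorandom lemma (Lemma~\ref{lem::pseudo}) requires $C \geq \frac{4}{\eps}\log\frac{e}{\eps}$ with $\eps$ polynomial in $\alpha$, and the authors explicitly report that their absorbing proofs give $C = \Theta(\alpha^{-4})$ and $C = \Theta(\alpha^{-12})$ --- they only \emph{speculate} that logarithmic dependence should be achievable. You essentially admit the gap yourself (``a straightforward implementation \dots would typically give $C$ of order $1/\alpha$ or $\log(1/\alpha)/\alpha$''); the final paragraph of your proposal hand-waves past it by invoking ``careful coupling'' without supplying the mechanism.

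The real point is that an absorber-based strategy of the kind this paper develops is the wrong tool for the sharp logarithmic constant. Hahn-Klimroth et al.\ avoid requiring that \emph{all} linear-sized set pairs be connected and instead reason about the structure of $G_0$ (its dense pieces, how they expand, and how few random edges suffice to connect the components and run rotation--extension), which is what lets the union bound be taken over far fewer events. If your goal is merely ``$C = \mathrm{poly}(1/\alpha)$,'' your outline is essentially the one executed (in the directed setting) in this paper; but it does not reach $C = (6+o(1))\log\frac{1}{\alpha}$, and the pseudorandom lemma you propose provably fails at that sparsity.
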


Note that one cannot take $C=o(\log \frac{1}{\alpha})$ in Theorem~\ref{hmmmp} (see~\cite[Section 1.1]{hmmmp}),
so in this sense the theorem is best possible.
We can also take $\alpha$ to be non-constant in Theorems~\ref{thm::semideg_simple} and~\ref{thm:main}; indeed (taking $\alpha =\eta$ in Theorem~\ref{thm:main}), our proofs give $C = \Theta(\alpha^{-4})$ and $C = \Theta(\alpha^{-12})$ respectively. For Theorem~\ref{thm::semideg_simple} at least,
similarly to Theorem~\ref{hmmmp}, we would expect that this could be improved 
to $C = \Theta(\log \frac{1}{\alpha})$. It would be interesting to determine the
optimal dependence of $C$ on $\alpha$.

In Theorem~\ref{thm:main} we studied randomly perturbed digraphs with linear minimum total degree. It is also natural to seek other such total degree results. For example, given any $\alpha>0$, $k\in \mathbb N$, $n \in k \mathbb N$ and any $n$-vertex digraph $D_0$ with $\delta(D_0)\geq  \alpha n$, how many random  edges must one add to $D_0$ to ensure that, a.a.s., the resulting digraph contains a $T_k$-factor? Here by \emph{$T_k$-factor} we mean a collection of vertex-disjoint transitive tournaments of size $k$ that together cover $V(D)$.

Another natural problem is to determine the number of random edges one must add to a digraph with linear minimum semi-degree to a.a.s.~force a given oriented spanning tree. The corresponding problem in the graph setting has been studied in~\cite{hmmmp, joos2, kks1}. 

{\bf Remark:} Since a version of this paper first appeared on arXiv, Morawski and Petrova~\cite{mp} have resolved this problem for fixed oriented spanning trees of bounded degree.

\smallskip

\section*{Acknowledgments}
Much of the research in this paper was carried out during a visit by the fourth and fifth authors to the University of Illinois at Urbana-Champaign. The authors are grateful to the BRIDGE strategic alliance between the University of Birmingham and the University of Illinois at Urbana-Champaign, which partially funded this visit. We thank Andrzej Dudek for pointing us towards Lemma~\ref{lem::longpath}, and to the referee for their careful review.

{\noindent \bf Data availability statement.}
There are no additional data beyond that contained within the main manuscript.


\begin{thebibliography}{99}

\bibitem{conf} I. Araujo, József Balogh, Robert A. Krueger, S. Piga, and A. Treglown, Cycles of every length and orientation in randomly perturbed digraphs, \emph{Proceedings of the 12th European Conference on Combinatorics, Graph Theory and Applications} (2023), 66--73. 

\bibitem{BKS} I. Ben-Eliezer, M. Krivelevich, and B. Sudakov, The size Ramsey number of a directed path, \emph{J. Combin. Theory Ser. B}~{\bf 102} (2012), 743--755.

\bibitem{bfm1} T. Bohman, A. Frieze, and R. Martin, How many edges make a dense graph Hamiltonian?, \emph{Random Struct. Alg.}~{\bf 22} (2003), 33--42.

\bibitem{bondy2} J.A. Bondy, Pancyclic graphs I, \emph{J. Combin. Theory Ser. B} {\bf 11} (1971), 80--84.

\bibitem{bondy1} J.A. Bondy, Pancyclic graphs: recent results, infinite and finite sets, \emph{Colloq. Math. Soc. J\'anos Bolyai, Keszthely} (1973), 181--187.




\bibitem{bhkmpp} J. B\"ottcher, J. Han, Y. Kohayakawa, R. Montgomery, O. Parczyk, and Y. Person, Universality for bounded degree spanning trees in randomly perturbed graphs, \emph{Random Struct. Alg.} {\bf 55} (2019), 854--864.




\bibitem{dkmot} L. DeBiasio, D. Kühn, T. Molla, D. Osthus, and A. Taylor, Arbitrary orientations of Hamilton cycles in digraphs,
\emph{SIAM J. Discrete Math.} {\bf 29} (2015), 1553--1584.
\bibitem{deb} L. DeBiasio and T. Molla, Semi-degree threshold for anti-directed Hamilton cycles,
\emph{Electron. J. Combin.} {\bf 22} (2015), P4.34.
\bibitem{D} G.A. Dirac, Some theorems on abstract graphs,
{\em Proc. Lond. Math. Soc.} {\bf 2} (1952), 69--81.

\bibitem{univ} A. Ferber, G. Kronenberg, and K. Luh, 
Optimal threshold for a random graph to be 2-universal,
\emph{Trans. Am. Math. Soc.}
{\bf 372} (2019), 4239--4262.
\bibitem{ferberkwan} A. Ferber and M. Kwan, Almost all Steiner triple systems are almost resolvable, \emph{Forum Math. Sigma} {\bf 8} (2020), e39.
 \bibitem{fl} A. Ferber and E. Long, Packing and counting arbitrary Hamilton cycles in random digraphs, \emph{Random Struct. Alg.} {\bf 54} (2019), 499--514.

 


 \bibitem{GhouilaHouri} A.~Ghouila-Houri, Une condition suffisante d'existence d'un circuit
hamiltonien, \emph{C.R.~Acad.~Sci.~Paris}~\textbf{25} (1960), 495--497.


\bibitem{Grant} D. Grant, Antidirected Hamilton cycles in digraphs, \emph{Ars Combinatoria} {\bf 10} (1980), 205--209.
 \bibitem{Hagg} R.~H{\"a}ggkvist and A.~Thomason, Oriented Hamilton cycles in digraphs, \emph{J. Graph Theory} {\bf 20} (1995),
471--479.

\bibitem{hmmmp} M. Hahn-Klimroth, G.S. Maesaka, Y. Mogge, S. Mohr, and O. Parczyk,
Random perturbation of sparse graphs,
\emph{Electron. J. Combin.} {\bf 28} (2021),
P2.26.

\bibitem{hmt} J. Han, P. Morris, and A. Treglown, Tilings in randomly perturbed graphs: bridging the gap between Hajnal--Szemer\'edi and Johansson--Kahn--Vu,
\emph{Random Struct. Alg.} {\bf 58} (2021), 480--516.




 \bibitem{joos2} F. Joos and J. Kim, Spanning trees in randomly perturbed graphs, \emph{Random Struct. Alg.} {\bf 56} (2020), 169--219.
 \bibitem{kriv} M. Krivelevich, Triangle factors in random graphs, \emph{Combin. Probab. Comput.} {\bf 6} (1997), 337--347.
 
 \bibitem{kks2} M. Krivelevich, M. Kwan, and B. Sudakov, Cycles and matchings in randomly perturbed digraphs and hypergraphs, \emph{Combin. Probab. Comput.} {\bf 25} (2016), 909--927.

\bibitem{kks1} M. Krivelevich, M. Kwan, and B. Sudakov, Bounded-degree spanning trees in randomly perturbed graphs, \emph{SIAM J. Discrete Math.} {\bf 31} (2017), 155--171.




\bibitem{McDia} C. McDiarmid, General first-passage percolation, \emph{Adv. Appl. Probab.}~ {\bf 15} (1983), 149--161.

\bibitem{monty} R. Montgomery, Spanning trees in random graphs, \emph{Adv. Math.} {\bf 356} (2019), 106793.
\bibitem{mont_embedding} R. Montgomery, Embedding bounded degree spanning trees in random graphs, arXiv:1405.6559.

\bibitem{mont} R. Montgomery, Spanning cycles in random directed graphs, arXiv:2103.06751.


\bibitem{mp} P. Morawski and K. Petrova, Randomly perturbed digraphs also have bounded-degree
spanning trees, arXiv:2306.14648.


 
\bibitem{par} O. Parczyk,
2-universality in randomly perturbed graphs, \emph{European J. Combin.} {\bf 87} (2020), 103--118. 




\bibitem{RRS} V. R\"odl, A. Ruci\'nski, and E. Szemer\'edi, A Dirac-Type Theorem for $3$-Uniform Hypergraphs, \emph{Combin. Probab. Comput.}, {\bf 15} (2006), 229--251.


\end{thebibliography}
\end{document}